\newcounter{mythm}[section]
\newtheorem{tm}[mythm]{Theorem}
\newtheorem{lm}[mythm]{Lemma}
\newtheorem{defi}[mythm]{Definition}
\newtheorem{pr}[mythm]{Proposition}
\newtheorem{rem}[mythm]{Remark}
\newtheorem{cor}[mythm]{Corollary}
\newtheorem{ex}[mythm]{Example}
\newenvironment{customthm}[1]  {\innercustomthm}
  {\endinnercustomthm}
\def\C{\mathbb{C}}
\def\R{\mathbb{R}}
\def\N{\mathbb{N}}
\newcommand\restr[2]{{
  \left.\kern-\nulldelimiterspace 
  #1 
  \littletaller 
  \right|_{#2} 
  }}
\newcommand{\littletaller}{\mathchoice{\vphantom{\big|}}{}{}{}}
\begin{document}
\begin{center}
\textsc{\textbf{\LARGE{Asymptotics for resolutions and smoothings of Calabi-Yau conifolds}}}
\end{center}

\begin{center}
\textsc{ABDOU OUSSAMA BENABIDA}
\end{center}

\noindent \textsc{Abstract.} We show that the Calabi–Yau metrics with isolated conical singularities of Hein-Sun \cite{hein_calabi-yau_2017} admit polyhomogeneous expansions near their singularities. Moreover, we show that, under certain generic assumptions, natural families of smooth Calabi-Yau metrics on crepant resolutions and on polarized smoothings of conical Calabi–Yau manifolds degenerating to the initial conical Calabi-Yau metric admit polyhomogeneous expansions where the singularities are forming. The construction proceeds by performing weighted Melrose-type blow-ups and then gluing conical and scaled asymptotically conical Calabi-Yau metrics on the fibers, close to the blow-up's front face without compromising polyhomogeneity. This yields a polyhomogeneous family of Kähler metrics that are approximately Calabi-Yau. Solving formally a complex Monge-Ampère equation, we obtain a polyhomogeneous family of Kähler metrics with Ricci potential converging rapidly to zero as the family is degenerating. We can then conclude that the corresponding family of degenerating Calabi-Yau metrics is polyhomogeneous by using a fixed point argument.

\vspace{1cm}

\tableofcontents
\newpage
\section{Introduction} 
\subsection{Overview}
Calabi-Yau manifolds form an important class of complex manifolds, defined by being Kähler and having vanishing first Chern class. Thanks to Yau's theorem \cite{yau_ricci_1978}, compact closed Calabi-Yau manifolds admit a unique Ricci-flat Kähler metric in every Kähler class. Yau's theorem has been generalized by Eyssidieux-Guedj-Zeriahi \cite{eyssidieux_singular_2009} to the case where the Calabi-Yau manifold is the regular part of a normal projective variety with only canonical singularities. In terms of the Berger classification, Calabi-Yau manifolds constitute building blocks of Riemannian manifolds with special holonomy included in $SU(n)$. These play an important role in string theory compactifications. Calabi-Yau manifolds also occur in mirror symmetry. \\

In this paper, we are interested in the situation where $X_0$ is a compact normal complex-analytic variety which is smooth Calabi-Yau outside a set of isolated singularities such that for all $x \in X_0^{\text{sing}}$, the germ $(X_0,x)$ is biholomorphic to a neighborhood of the vertex in a Calabi-Yau cone $C_x$ (with smooth cross section) with a Ricci-flat Kähler cone metric $\omega_{C_x}$. By abuse of language, we say Kähler forms or metrics interchangeably. It is well known that, if the singularities of $X_0$ are orbifold singularities i.e. $(X_0,x) \cong \C^n / G$, where $G \subset SU(n)$ acts freely on $\C^n \setminus \{ 0\}$ and $X_0$ admits orbifold Kähler metrics, then $X_0$ admits a Ricci-flat orbifold Kähler metric $\omega_{CY}$ in every Kähler class; see \cite{joyce2000compact} for example. More generally, according to the result established in \cite{hein_calabi-yau_2017} by Hein-Sun and its recent improvements by Chiu-Székelyhidi \cite{ChiuSzekelyhidi} and Zhang \cite{zhang2024polynomial}, if $X_0$ is a normal projective variety with only canonical singularities and $L_0$ is an ample line bundle, then the unique Ricci-flat Kähler metric $\omega_{CY} \in 2 \pi c_1(L_0)$ on $X_0$ with bounded potential on the germ $(X,x)$ is conical and asymptotic to the cone metric $\omega_{C_x}$ on $C_x$ near $x$. We refer to $(X_0, \omega_{CY})$ as above as a \textbf{conical Calabi-Yau manifold modeled on the cone $(C_x, \omega_{C_x})$ near $x$}. \\

For simplicity, we suppose that $X_0$ has a unique singular point $x \in X_0$. There are two main ways of desingularizing $X_0$ as a Calabi-Yau manifold. When these exist, they are described as follows :
\begin{itemize}
\item One is a \textbf{crepant resolution} given by a smooth Kähler manifold $\hat{X}$ and a proper bi-meromorphic map $\hat{\pi} : \hat X \rightarrow X_0$ such that $\hat{\pi} : \hat{X} \setminus \hat{\pi}^{-1} ( \{ x\}) \rightarrow X_0 \setminus \{x \} $ is a bi-holomorphism and the canonical divisor satisfies $K_{\hat{X}} = \hat{\pi}^{*} K_{X_0}$. Hence, $\hat{X}$ is a smooth compact Calabi-Yau manifold and therefore admits a Ricci-flat Kähler metric in every Kähler class by Yau's theorem.
\item The other one is a \textbf{smoothing} given by $\pi : \mathcal{X} \rightarrow \mathbb{D}$ where $\mathcal{X}$ is an $(n+1)$-dimensional projective variety, $\mathbb{D} \subset \C$ is the unit disk and $\pi$ is a proper flat morphism such that $\pi^{-1}(\{ 0\}) \cong X_0$ and $X_{t} := \pi^{-1}(\{ t\}) $ is smooth for $t \neq 0$ and the relative canonical bundle is trivial, i.e. $\mathcal{K}_{\mathcal{X}/ \mathbb{D}} \cong \mathcal{O}_{\mathcal{X}/\mathbb{D}}$. Therefore, $X_t$ is a smooth compact Calabi-Yau manifold for all $t \in \mathbb{D} \setminus \{0\}$ and hence admits a Ricci-flat Kähler metric in every Kähler class by Yau's theorem. 
\end{itemize}

We are interested in studying families of smooth Ricci-flat Kähler metrics on crepant resolutions and smoothings degenerating to the initial conical Calabi-Yau metric. An earlier work by Chan \cite{chan_desingularizations_2006, chan_desingularizations_2009} gave a general construction, in the complex three dimensional case, of families of smooth Ricci-flat Kähler metrics desingularizing conical Calabi-Yau manifolds by gluing scaled asymptotically conical Calabi-Yau manifolds near the singular points and making a small perturbation using $G_2$ techniques. However, as noted in \cite[Appendix A]{hein_calabi-yau_2017}, in the construction obtained by Chan, a perturbation in the (almost) complex structure makes it unclear which smooth Calabi-Yau 3-folds are being produced. To stay within the realm of crepant resolutions and smoothings, a later work by Arezzo-Spotti \cite{arezzo2016csck} gave a gluing construction for crepant resolutions in any dimension which is carried out only at the level of Kähler potentials. For smoothings, gluing constructions have been obtained in the surface case by Biquard-Rollin \cite{Biquard} for CSCK metrics and Spotti \cite{Spotti} for Kähler-Einstein metrics on Del Pezzo surfaces.    \\ 

In this paper, we obtain a finer description of certain families of smooth Calabi-Yau metrics degenerating to a conical Calabi-Yau metric, obtained by a gluing construction. More precisely, we improve the work of Arezzo-Spotti \cite{arezzo2016csck} on crepant resolutions by considering a less restrictive condition on the cohomology class which allows us to treat certain small resolutions and we give a similar gluing construction in the case of polarized smoothings of conical Calabi-Yau manifolds under certain generic assumptions. Moreover, we prove that, in both cases, such families have asymptotic expansions, known as \emph{polyhomogeneous expansions}, where the singularities are forming.\\

For every manifold with corners $M$, the space of \textbf{polyhomogeneous functions} denoted by $\mathcal{A}_{phg}(M)$ is roughly defined as follows: If $M$ is without boundary, then $\mathcal{A}_{phg}(M) = C^\infty(M)$. Otherwise, $\mathcal{A}_{phg}(M)$ is the space of functions which are smooth on the interior of $M$ and which admit an asymptotic expansion near every boundary hypersurface $H$ in $M$ of the form 
$$
\sum_{(z, k) \in F(H)} a_{(z, k)} \hspace{0.1cm} x_H^z(\log x_H)^k,
$$
where $x_H$ is a boundary defining function for $H$, $F(H) \subset \C \times \N$ is a suitable index set with only finitely many terms at each order $z$ and $a_{(z,k)}$ are functions which are smooth up to the boundary near $H$ and polyhomogeneous near the other boundary hypersurfaces. See Section \ref{phgfnc} for a more precise definition. Such expansions generalize smoothness up to the boundary and usually appear as the boundary behavior of solutions to elliptic equations on manifolds with corners with an additional suitable structure. The space of polyhomogeneous functions is a $C^\infty$-module, therefore, it makes sense to talk about polyhomogeneous sections of a given vector bundle on $M$.\\

Our method is inspired by the approach of Melrose and Zhu \cite{melrose_resolution_2018}, who obtained similar asymptotics for constant curvature metrics on Lefschetz fibrations of Riemann surfaces. See also \cite{MR4271388, zhu2024gluing}  for recent applications of this approach to construct gravitational instantons. The main idea is that, after describing the proper geometric setting on which we wish to obtain the asymptotics using weighted Melrose-type blow-ups and performing an appropriate gluing of metrics, we solve the complex Monge-Ampère equation formally improving the glued metric in a controlled manner, before applying a fixed point argument. These asymptotic expansions, naturally, give a notion of convergence which is stronger than the Gromov-Hausdorff convergence usually obtained.

\subsection{Main results } \label{setting} 
Let $\left(X_0, \omega_{CY} \right)$ be a conical Calabi-Yau manifold modeled on a Calabi-Yau cone $(C_x, \omega_{C_x})$ near every $x \in X_0^{sing}$ in the sense of Definition \ref{def : conical CY}. Denote by $r_x$ the distance function to the vertex on $C_x$ with respect to the metric induced by $\omega_{C_x}$. \\

To state our first result, we recall that a Kähler form $\omega$ on $X_0^{reg}$ is said to be \emph{smoothly Kähler on $X_0$} if it is smooth on the regular part and near a singularity, it is given by the restriction of a smooth Kähler form under local embedding into a smooth Kähler manifold. In particular, this holds for the restriction of the Fubini-Study metric on projective varieties.
\begin{customthm}{A} \label{tmA} 
If $(X_0, \omega_{CY})$ as above is such that $\omega_{CY} \in [\omega] \in H^{1,1}(X_0^{reg}, \R)$ for a smoothly Kähler form $\omega$ and $X_0$ has trivial canonical bundle, then the conical Calabi-Yau metric $\omega_{CY}$ admits a polyhomogeneous expansion near every singularity in terms of the radial function $r_x$, with non-negative index set $F_x$ i.e. $F_x \subset \left( (0, \infty) \times \N \right) \cup \left( \{0 \} \times \{  0\} \right)$.
\end{customthm}
The proof of this theorem is given in Section \ref{conphg}. We follow a similar approach to that in \cite{conlon_moduli_2015}, where a corresponding result is obtained for asymptotically cylindrical and asymptotically conical Calabi-Yau metrics. This relies on studying the solutions of the linearized equation of the complex Monge-Ampère equation, i.e. $\Delta_c u = v$, where $\Delta_c$ is the Laplacian of a conical metric, using results coming from the theory of b-calculus and b-geometry \cite{melrose_atiyah-patodi-singer_1993, Rafe_elliptic, melrose_pseudodifferential_1990, melrose_calculus_1992}. To avoid complications in our work, we will not keep track of the powers appearing in the expansion, but we mention that they are related to the spectrum of the Laplacian on the link of the cone with its induced metric.\\

In all of the following, the conical Calabi-Yau metric $\omega_{CY}$ will be supposed to be polyhomogeneous.
\subsubsection*{Crepant resolutions} 
Now, let $m := \# X_0^{sing} < \infty$ and we denote the Calabi-Yau cone models near $x_i \in X_0^{sing}$ by $(C_i, \omega_{C_i})$ and the associated radial function by $r_i$. Suppose that, for all $i \in \{ 1,2, \cdots, m\}$, $C_i$ has a crepant resolution given by $ \hat \pi_{\hat C_i} : \hat C_i \rightarrow C_i$ and denote $E_i := \hat \pi_{\hat C_i}^{-1}(\{ o_i\})$, where $o_i$ is the vertex of $C_i$. For $\varepsilon >0$, let $Y_{i,\varepsilon} := \{p \in C_i; \hspace{0.1cm} r_i(p) \geq \frac{1}{\varepsilon}  \}$. Up to composing by a scaling, consider a biholomorphic identification $U_{i,0} \cong \{p \in C_i; \hspace{0.1cm} r_i(p)< \frac{2}{\varepsilon} \}$, where $U_{i,0} \subset X_0$ is a neighborhood of $x_i \in X_0^{sing}$. Together with the biholomorphic identification $\hat C_i \setminus E_i \cong C_i \setminus \{ o_i\}$, we can, therefore, consider the holomorphic gluing $\hat X_\varepsilon := \left((X_0 \setminus X_0^{sing}) \bigcup_{i =1}^m \hspace{0.1cm} (\hat C_i \setminus Y_{i,\varepsilon}) \right) \Big/ \sim,$ where $\sim$ is the equivalence relation identifying the images of $\{p \in C_i;\hspace{0.1cm} r_i(p) < \frac{1}{\varepsilon} \} \setminus \{ o_i\}$ inside $U_{i, 0} \setminus \{ x_i\}$ and $\hat C_i \setminus (E_i \cup Y_{i, \varepsilon})$ respectively, under the biholomorphic identifications described above. As complex manifolds, all $X_\varepsilon$ are biholomorphic, hence, we denote the underlying complex manifold by $\hat X$. Now, by a result of Goto \cite{goto_calabi-yau_2012}, $\hat C_i$ admits an asymptotically conical Calabi-Yau metric in every Kähler class. Let $\omega_{AC,i}$ denote such a metric. Using a gluing construction, Arezzo-Spotti \cite{arezzo2016csck} constructed a family of smooth Calabi-Yau metrics $\omega_{CY, \varepsilon}$ on $\hat X_\varepsilon$ which degenerates as $\varepsilon \rightarrow 0$ to the initial conical Calabi-Yau metric $\omega_{CY}$ under the assumption that the cohomology classes of $\omega_{AC,i}$ are compactly supported in $H^2(\hat C_i, \R)$. Here, we will consider a more general assumption on the cohomology classes of $\omega_{AC,i}$ which allows us to also treat examples of \emph{small resolutions} i.e. resolutions such that $\mathrm{codim}_{\C} \hspace{0.1cm} E_i>1$. 
\begin{itemize}
    \item \textbf{Assumption \hypertarget{R}{R}:} Suppose that there exists $\lambda >0$ such that $$\left( \left[\restr{\omega_{AC,1}}{\hat C_1 \setminus Y_{1,\lambda}} \right], \cdots, \left[\restr{\omega_{AC,m}}{\hat C_m \setminus Y_{m,\lambda}} \right] \right)$$ is in the image of the cohomology restriction map $$ H^{1,1}(\hat X_\lambda, \R) \rightarrow H^{1,1} \left( \bigsqcup_{i=1}^m (\hat C_i \setminus Y_{i,\lambda}), \R \right) \cong \bigoplus_{i=1}^m H^{1,1} \left( \hat C_i \setminus Y_{i,\lambda}, \R \right). $$
\end{itemize}
\hfill
\begin{rem}
\hfill
    \begin{itemize}
        \item Notice that, by a simple gluing argument, Assumption \hyperlink{R}{R} is, in particular, satisfied if the Kähler classes $[\omega_{AC,i}]$ are compactly supported in $H^2(\hat C_i, \R)$. However, as noted by Van Coevering \cite{van_coevering_ricci-flat_2010}, if the resolution is small then $\hat C_i$ has no compactly supported Kähler classes. 
        \item Our methods will show that Assumption \hyperlink{R}{R} will also guarantee that the complex manifold $\hat X$ is, in particular, Kähler. This is not immediate from the holomorphic gluing. 
    \end{itemize}
\end{rem}
To keep notations light, suppose that $m=1$ and $x$ is the only singular point in $X_0$ with associated Calabi-Yau cone $(C, \omega_C)$ whose crepant resolution is given by $(\hat C, \hat \pi_{\hat C})$. In Section \ref{crepant resolution}, we will construct an appropriate manifold with corners $\mathcal{M}_b$ with two boundary hypersurfaces $B_{I}$ and $B_{II}$ such that 
\begin{itemize}
    \item $\mathcal{M}_b \setminus (B_I \cup B_{II}) \cong \hat X \times (0,\varepsilon_0]_\varepsilon$;
    \item $\overset{\circ}{B_I} \cong \hat{C}$, where $\overset{\circ}{B_I}$ is the interior of $B_I$;
    \item $\overset{\circ}{B_{II}} \cong X_0 \setminus \{ x\}$, where $\overset{\circ}{B_{II}}$ is the interior of $B_{II}$;
    \item $\partial B_I = \partial B_{II} \cong L$, where $L$ is the link of the cone $C$.
\end{itemize}
See Figure \ref{Figure 1} for illustration. The construction uses weighted Melrose-type blow-ups. See Section \ref{blow} for definition. 
\begin{rem}
In general, if $u$ is any type of object defined on $\mathcal{M}_b$ (e.g. a function or a form), we use the notation $u_\varepsilon$ to denote its restriction to a fiber $\hat{X} \times \{ \varepsilon \}$. 
\end{rem}

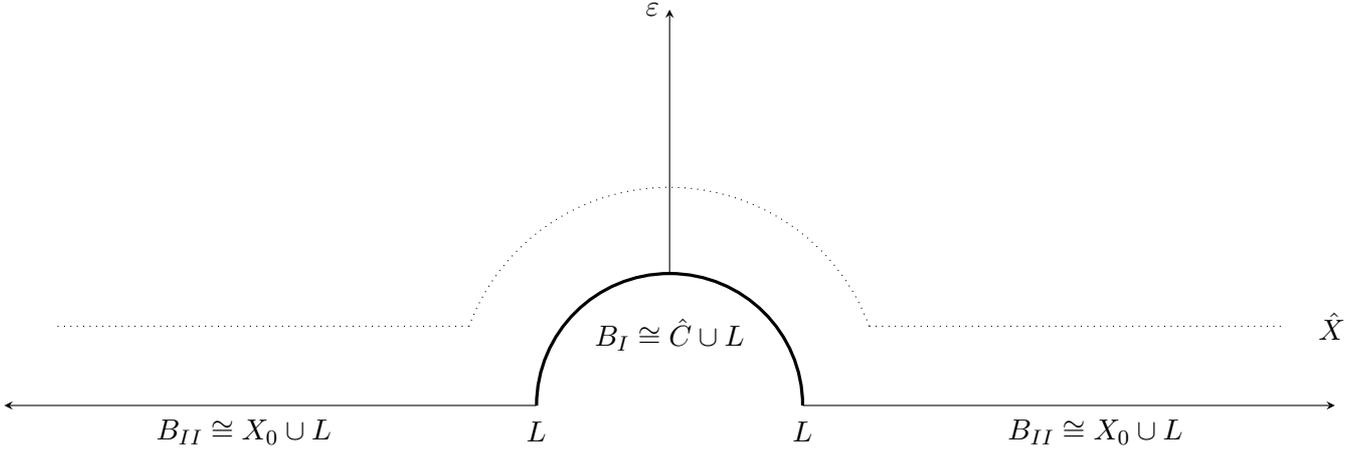
\begin{figure}[ht] 
\centering
\begin{tikzpicture}[scale=3.5,>=stealth]
  \draw[->] (0.5,0) -- (2.5,0) ;
  \draw[->] (0,0.5) -- (0,1.5) node[left] {$\varepsilon$};

  \draw[->]  (-0.5,0) -- (-2.5,0);

  \draw[very thick] (0.5,0) arc[start angle=0, end angle=180, radius=0.5];

  \node at (0, 0.27) {$B_I  \cong \hat C \cup L$};

  \node at (1.6,-0.1) {$B_{II} \cong X_0 \cup L$};
  \node at (-1.6,-0.1) {$B_{II} \cong X_0 \cup L$};
  \node at (0.5, -0.1) {$L$};
  \node at (-0.5, -0.1) {$L$};

  \draw[dotted] (-2.3,0.3) -- (-0.75,0.3);
  \draw[dotted] (0.75,0.3) arc[start angle=20, end angle=160, radius=0.8];
  \draw[dotted] (0.75,0.3) -- (2.3,0.3);
  \node[right] at (2.4,0.3) {$\hat X$};


\end{tikzpicture} \caption{The blown-up parametric space $\mathcal{M}_b$ in the case of a crepant resolution.}
\label{Figure 1}
\end{figure}

\begin{customthm}{B} \label{tmB}
Under assumption \hyperlink{R}{R}, there exists a family of smooth Calabi-Yau metrics $\omega_{CY, \varepsilon}$ on the crepant resolution $\hat X$, for $\varepsilon>0$ sufficiently small, which is polyhomogeneous on $\mathcal{M}_b$ with restrictions $\restr{\omega_{CY, \varepsilon}}{B_{II}} = \omega_{CY}$ and $\restr{\frac{\omega_{CY, \varepsilon}}{\varepsilon^2}}{B_I} = \omega_{AC}$.
\end{customthm}

Roughly speaking, this result means that, if $\rho_1$ and $\rho_2$ are boundary defining functions for $B_I$ and $B_{II}$ respectively, the family $\omega_{CY, \varepsilon}$ admits an expansion near $B_I$ and $B_{II}$ of the form 
\begin{align*}
     \omega_{CY, \varepsilon} \cong \omega_{CY} + i \partial \overline{\partial} u_2, \hspace{0.1cm} & u_2 \sim \sum_{(\lambda,k) \in F(B_{II})} a_{(\lambda,k)}\rho_2^{\lambda} (\log \rho_2)^k \hspace{0.1cm} \text{when} \hspace{0.1cm} \rho_2 \rightarrow 0;\\
     \omega_{CY, \varepsilon} \cong \varepsilon^2 \omega_{AC} + \varepsilon^2 i \partial \overline{\partial} u_1, \hspace{0.1cm} &u_1 \sim \sum_{(\lambda,k) \in F(B_{I})} b_{(\lambda,k)}\rho_1^{\lambda} (\log \rho_1)^k \hspace{0.1cm} \text{when} \hspace{0.1cm} \rho_1 \rightarrow 0.
\end{align*}
\subsubsection*{Polarized smoothings}
Now, suppose, in addition, that $X_0$ is projective with canonical singularity at $x$ and $L_0$ is an ample line bundle on $X_0$ such that $\omega_{CY} \in 2 \pi c_1(L_0)$. Suppose that $(\mathcal{X}, \mathcal{L}, \pi)$ is a polarized smoothing of $(X_0,L_0)$ with trivial relative canonical bundle. Suppose that the smoothing satisfies the following assumptions 
\begin{itemize}
    \item \textbf{ Assumption \hypertarget{S.1}{S.1}:} The smoothing $\pi :\mathcal{X} \rightarrow \mathbb{D}$ is locally isomorphic near $x \in X_0 \subset \mathcal{X}$ to an affine smoothing of the cone $C$ given by $p:W \subset \C^N \times \C_t \rightarrow \C_t$ and $W$ is invariant with respect to a diagonal $\R_{>0}$-action on $\C^N \times \C_t$ with real positive weights, which restricts on the cone $C$ to the scaling action generated by $r \partial_r$. See Section \ref{polarized smoothing} for a precise description. See also Remark \ref{rem ex}.
    \item \textbf{ Assumption \hypertarget{S.2}{S.2}:} The general fiber of the affine smoothing $V \cong p^{-1}(\{ 1\})$ admits an asymptotically conical Calabi-Yau metric $\omega_{AC}$ which is $i \partial \overline{\partial}$-exact outside a compact. 
\end{itemize}
\begin{rem}
   In the case of smoothings, Assumption \hyperlink{S.2}{S.2} is already satisfied for a large set of examples that are of interest as we show in Remark \ref{rem ex}. 
\end{rem}

Consider a ray in $W$ given by $W_\theta := p^{-1}\left( \{ t e^{i \theta},  t \in[0, \infty) \}\right)$ for a fixed $\theta \in S^1$. Without loss of generality, suppose $\theta=0$ and let $\mathcal{X}_0 \subset \mathcal{X}$ be the path in $\mathcal{X}$ which maps to $W_0$ under the local isomorphism. Similar to before, in Section \ref{polarized smoothing}, we will construct an appropriate manifold with corners $\mathcal{X}_b$ with two boundary hypersurfaces $B_{I}$ and $B_{II}$ such that
\begin{itemize}
    \item $\mathcal{X}_b \setminus B_I \cong \mathcal{X}_0 \setminus \{x \}$;
    \item $\overset{\circ}{B_I} \cong V$, where $\overset{\circ}{B_I}$ is the interior of $B_I$;
    \item $\overset{\circ}{B_{II}} \cong X_0 \setminus \{ x\}$, where $\overset{\circ}{B_{II}}$ is the interior of $B_{II}$;
    \item $\partial B_I = \partial B_{II} \cong L$, where $L$ is the link of the cone $C$.
\end{itemize}
Using the same notations, we prove the following result.
\begin{customthm}{C} \label{tmC}
    Under assumptions \hyperlink{S.1}{S.1} and \hyperlink{S.2}{S.2} and considering an appropriate change of coordinates $s := t^{\frac{1}{\mu}}$ for a certain $\mu>0$, there exists a family of smooth Calabi-Yau metrics $\omega_{CY,s}$, for $s>0$ sufficiently small, on the fibers $X_s:= \pi^{-1}(\{s^\mu\})$ of the polarized smoothing $(\mathcal{X}, \mathcal{L}, \pi)$, along the path $\mathcal{X}_0 \subset \mathcal{X}$, which is polyhomogeneous on $\mathcal{X}_b$ with restrictions $\restr{\omega_{CY,s}}{B_{II}} = \omega_{CY}$ and $\restr{\frac{\omega_{CY,s}}{s^{2}}}{B_I} = \omega_{AC}$.
\end{customthm}

In Theorem \ref{main}, we state a more general result and in Section \ref{main examples} we give the necessary constructions to apply the theorem in the above settings of crepant resolutions and polarized smoothings to get theorems \ref{tmB} and \ref{tmC}.\\

Our strategy is similar in the two cases of crepant resolutions and polarized smoothings with a minor difference in the construction of the initial family of the Kähler forms. For convenience, we only describe it in the case of a crepant resolution 
 \begin{enumerate}
     \item For small $\varepsilon>0$, we perform appropriate polyhomogeneous gluing of lifts of $\omega_C$ and $\varepsilon^2 \omega_{AC}$ to get a Kähler form on $\hat{X}$ denoted by $\omega_\varepsilon$. 
     \item Next, if $v_\varepsilon$ denotes the normalized potential of the Ricci-form of $\omega_\varepsilon$ i.e. $\mathrm{Ric}(\omega_\varepsilon) = i \partial \overline{\partial} v_\varepsilon$ such that $\int_{\hat{X}} \omega_\varepsilon^n = \int_{\hat{X}} e^{v_\varepsilon} \omega_\varepsilon^n$,
     we prove that $v_\varepsilon$ is polyhomogeneous on the blown-up space $\mathcal{M}_b$ and vanishes at $B_I$ and $B_{II}$. We do so by considering $v_\varepsilon$ as a solution to
     $$
     \Delta_{\omega_{\varepsilon}} v_\varepsilon = s_{\varepsilon},
     $$
     where $s_{\varepsilon}$ denotes the scalar curvature and $\Delta_{\omega_\varepsilon}$ is the Laplacian.
     \item  After that, we wish to solve
     $$
     \frac{(\omega_\varepsilon + i \partial \overline{\partial} u_\varepsilon)^n}{\omega_\varepsilon^n} = e^{v_\varepsilon}. 
     $$
     First, we solve the equation formally in the polyhomogeneous sense, to get 
     $$
     \frac{(\omega_\varepsilon + i \partial \overline{\partial} u_{0,\varepsilon})^n}{\omega_\varepsilon^n} = e^{v_\varepsilon}-g_\varepsilon, 
     $$
     where $g_\varepsilon$ vanishes rapidly with all its derivatives at $\varepsilon=0$. The formal solution depends on solving linear equations involving the Laplacian $\Delta_\varepsilon$ iteratively.  
     \item Then, using a Banach fixed point argument, we prove that there exists a unique $\tilde u$ vanishing rapidly with all its derivatives at $\varepsilon=0$ such that 
     $$
     \frac{(\omega_\varepsilon + i \partial \overline{\partial} u_{0,\varepsilon} + i \partial \overline{\partial} \tilde u)^n}{\omega_\varepsilon^n} = e^{v_\varepsilon}.
     $$
 \end{enumerate}
\subsection{Examples} 
First, let us show important examples for which our theorems apply.
\begin{ex}
Let $X_0$ be a hypersurface in $\mathbb{CP}^{n+1}$ of degree $n+2$ with a nodal singularity at a point $x \in X_0$. Therefore, $X_0$ is analytically isomorphic near $x$ to
$$
C = \{ z \in \C^{n+1}; \hspace{0.1cm} \sum_{i=1}^{n+1} z_i^2 =0 \}.
$$

As a Calabi-Yau cone, $C$ admits an explicit Ricci-flat Kähler cone metric $\omega_C$ known as the \emph{Stenzel metric} \cite{stenzel_ricci-flat_1993, candelas_comments_1990} given by 
$$
\omega_C = i \partial \overline{\partial}(r^2) = i \partial \overline{\partial} \left( ||z||^{2}\right)^{\frac{n-1}{n}}.
$$
In particular, $r$ is homogeneous of degree $1$ with respect to the diagonal $\R_{>0}$-action given by $
\lambda \cdot z = (\lambda^{\frac{n}{n-1}}z_1, \cdots, \lambda^{\frac{n}{n-1}}z_n)$. If $L_0 := \restr{\mathcal{O}(1)}{X_0}$, then, by Hein-Sun \cite{hein_calabi-yau_2017}, the unique Ricci-flat Kähler metric $\omega_{CY} \in 2 \pi c_1(L_0)$ on $X_0$ is asymptotic to the Stenzel metric $\omega_C$ near $x$. In addition, by Theorem \ref{tmA}, we get that $\omega_{CY}$ is polyhomogeneous near $x$.\\

Consider a smoothing $\pi :\mathcal{X} \rightarrow \mathbb{D}$ of $X_0$ in $\mathbb{CP}^{n+1}$ and $\mathcal{L} := \restr{\mathcal{O}_{\mathbb{D}}(1)}{\mathcal{X}}$. By a result of Kas-Schlessinger \cite{KasSchlessinger}, the smoothing $\mathcal{X}$ is isomorphic near $x$ to $$
W = \{(z,t) \in \C^{n+1}  \times \C; \hspace{0.1cm} \sum_{i=1}^{n+1} z_i^2 = t^k \}, 
$$
for a certain $k \in \N$. Therefore, $W$ is homogeneous with respect to the diagonal $\R_{>0}$-action given by $
\lambda \cdot (z,t) = (\lambda^{\frac{n}{n-1}}z_1, \cdots, \lambda^{\frac{n}{n-1}}z_n, \lambda^{\frac{2n}{k(n-1)}} t),$ hence, the smoothing $\pi :\mathcal{X} \rightarrow \mathbb{D}$ satisfies Assumption \hyperlink{S.1}{S.1}. Moreover, Stenzel \cite{stenzel_ricci-flat_1993}, constructed an asymptotically conical Ricci-flat Kähler metric $\omega_{AC}$ on the general fiber of $W$ which is $i \partial \overline{\partial}$-exact i.e. Assumption \hyperlink{S.2}{S.2} is satisfied. Therefore, Theorem \ref{tmC} implies that, along a path in $\mathcal{X}$ denoted by $\mathcal{X}_0$, corresponding to a ray in $W$, there is a family of Ricci-flat Kähler metrics $\omega_{CY,s}$ on the fibers of $\mathcal{X}_0$ which is polyhomogeneous on $\mathcal{X}_b$ as constructed above with restrictions $\restr{\omega_{CY, s}}{B_{II}} = \omega_{CY}$ and $\restr{\frac{\omega_{CY, s}}{s^2}}{B_I} = \omega_{AC}$.
\end{ex}
\begin{ex}
    Let $X_0$ be a hypersurface in $\mathbb{CP}^4$ given by the equation 
    $$
    X_0 = \left\{[\xi_0 : \cdots : \xi_4] \in \mathbb{CP}^4; \hspace{0.1cm} \xi_3 g(\xi_0, \cdots, \xi_4) + \xi_4h(\xi_0, \cdots, \xi_4) =0 \right\},
    $$
    where $g$ and $h$ are generic homogeneous polynomials of degree $4$. As described in \cite[Section 1, Section 2]{Rossi}, $X_0$ is a Calabi-Yau variety which has a set of $16$ nodal singularities $x_i$ given by 
    $$
    \xi_3 =\xi_4= g(\xi)=h(\xi)=0.
    $$
    We let $\omega_{CY}$ be a conical Calabi-Yau metric on $X_0$. Therefore, $\omega_{CY}$ is polyhomogeneous near the singularities by Theorem \ref{tmA}.\\
    
    Moreover, $X_0$ admits a simultaneous small resolution by a smooth Calabi-Yau manifold $\hat X$ and the nodes $x_i$  are replaced by $(-1,-1)$ curves i.e. rational curves $E_i \cong \mathbb{CP}^1$ with normal bundle identified with the total space of the holomorphic vector bundle $\mathcal{O}(-1) \oplus \mathcal{O}(-1)$ over $\mathbb{CP}^1$. Therefore, a neighborhood $U_i$ of $E_i$ can be identified with $\mathrm{Tot}(\mathcal{O}(-1) \oplus \mathcal{O}(-1)) \setminus Y_i$ where $Y_i \cong \{r \geq 1  \} \subset C = \{z \in \C^4; \hspace{0.1cm} \sum z_i^2=0 \}$.\\
    
    Now, let $\eta$ be an arbitrary Kähler form on $\hat X$. Its restriction on $U_i \cong \mathrm{Tot}(\mathcal{O}(-1)^{\oplus 2}) \setminus Y_i$ gives a  Kähler form that we denote by $\eta_i$.
    Therefore, $\eta_i$ defines a non-trivial class in $H^{1,1}\left(\mathrm{Tot}(\mathcal{O}(-1)^{\oplus 2}) \setminus Y_i, \R \right)$ and, since $H^2\left(\mathrm{Tot}(\mathcal{O}(-1)^{\oplus 2}) \setminus Y_i, \R \right) \cong H^2(\mathbb{CP}^1, \R) \cong \R$, we get that $[\eta_i] = \alpha_i [p^{*} \omega_{FS}]$ where $p : \mathcal{O}(-1)^{\oplus 2} \rightarrow \mathbb{CP}^1$ is the projection, $\omega_{FS}$ is the Fubini-Study metric and $\alpha_i >0$. Now,  let $\omega_{AC,i}$ be an asymptotically conical Calabi-Yau metric on $\mathrm{Tot}(\mathcal{O}(-1)^{\oplus 2})$ such that $[\omega_{CY,i}] = \alpha_i [p^* \omega_{FS}]$. Such metrics have been constructed explicitly by Candelas-De la Ossa \cite{candelas_comments_1990}. Therefore, $\left([\restr{\omega_{AC,1}}{U_1}], \cdots, [\restr{\omega_{AC,16}}{U_{16}}] \right)$ is in the image of the restriction map $H^{1,1}(\hat X, \R) \rightarrow H^{1,1}\left(\bigsqcup_{i=1}^{16} U_i, \R \right) \cong \bigoplus_{i=1}^{16} H^{1,1}(\mathrm{Tot}(\mathcal{O}(-1)^{\oplus 2}) \setminus Y_i, \R)$. Hence, a minor extension of Theorem \ref{tmB} to the case of several isolated conical singularities implies that $\hat X$ admits a family of Ricci-flat Kähler metrics $\omega_{CY,\varepsilon}$, for $\varepsilon$ sufficiently small, which is polyhomogeneous on $\mathcal{M}_b$ similar to the one described above but with $16$ new additional faces $B_{I,i}$ and with restrictions $\restr{\omega_{CY, \varepsilon}}{B_{II}} = \omega_{CY}$ and $\restr{\frac{\omega_{CY, \varepsilon}}{\varepsilon^2}}{B_{I,i}} = \omega_{AC,i}$.
\end{ex}

In the following, we give some general guiding principles for finding examples.
\begin{rem} \label{rem ex}
\hfill
\begin{itemize}
\item By arguments of Conlon-Hein in \cite[Section 5.1]{conlon2013asymptotically} and the results in \cite[Theorem A]{conlon2015asymptotically} and \cite[Theorem A, Theroem B]{conlon_classification_2024}, if the Calabi-Yau cone $C$ is regular and a complete intersection, then every affine smoothing of $C$ satisfies Assumption \hyperlink{S.2}{S.2}. Moreover, the versal deformation of $C$, obtained by Kas-Schlessinger \cite{KasSchlessinger}, itself is $\R_{>0}$-equivariant. In fact, it is $\C^*$-equivariant. See \cite[Theorem 2.5, remark 1) p.12-13]{MR584445}. Therefore, smoothings satisfying Assumption \hyperlink{S.1}{S.1} are classified by equivariant complex-analytic maps to the versal deformation.
\item More generally, asymptotically conical Calabi-Yau metrics have been completely classified by Conlon-Hein \cite{conlon_classification_2024}. Therefore, if the general fiber $V$ of an affine smoothing of $C$ is a deformation of negative $\xi$-weight in the sense of \cite[Definition 1.7]{conlon_classification_2024} and supposing the existence of a compactly supported Kähler class then Assumption \hyperlink{S.2}{S.2} is satisfied on $V$.
\item For orbifold singularities, crepant resolutions and smoothings of $\C^m /G$ for $G \subset SU(n)$ acting freely on $\C^m \setminus \{ 0\}$, are fairly understood. See Joyce \cite[Section 6.4]{joyce2000compact}. For example, If $m \geq 3$, then $\C^m /G$ has no non-trivial deformation.
    \end{itemize}
\end{rem}

\subsection{Future work}

There is a general process where one starts with a smooth Calabi-Yau $\hat{X}$ and contracts certain submanifolds to get a conical Calabi-Yau manifold $X_0$ then passes to a smoothing with trivial relative canonical bundle, in the complex-analytic category. This process is known as a \textbf{conifold transition} and denoted by 
$$
\hat{X} \rightarrow X_0 \leadsto X_t.
$$
A folklore conjecture raised by Reid \cite{reid1987moduli} proposes that all Calabi-Yau 3-folds are connected by a finite sequence of conifold transitions (with nodal singularities). It is known that, even if one starts with $\hat X$ projective, it is still possible to get non-Kähler $X_t$ with trivial canonical bundle after the smoothing. Such manifolds are called non-Kähler Calabi-Yau manifolds. A simple example is given by letting $\hat{X}$ be a smooth quintic threefold, therefore, in partiuclar, we have $b_2(\hat{X})=1$. After contracting some $(-1,-1)$ curves, on the smoothing, one can get that $b_2(X_t)=0$ and therefore $X_t$ cannot be Kähler. We refer to \cite{friedman1991threefolds} for further discussion. Studying such non-Kähler Calabi-Yau manifolds is important to address Reid's conjecture \cite{reid1987moduli}. In such a case, Ricci-flat Kähler metrics are instead replaced by a pair $(g, H)$ such that  
\begin{itemize}
    \item $g$ is a balanced metric i.e $g$ is a Hermitian metric on $T^{1,0}X$ such that 
    $$
    d \omega^{n-1} =0,
    $$
    where $\omega$ is the $(1,1)$-form associated to $g$. 
    \item $H$ is a Hermitian-Yang-Mills metric with respect to $\omega$ i.e 
    $$
    F \wedge \omega^{n-1} =0,
    $$
    where $F$ is the Chern curvature of the metric $H$.
\end{itemize}
The inspiration of this system comes from supersymmetric constraints in string theory. When $X$ is Kähler, $g=H=g_{CY}$, where $g_{CY}$ is a Ricci-flat Kähler metric solves such a system. In the non-Kähler three dimensional case, such a system is solved in \cite{fu2012balanced, collins2024stability}. In a recent work \cite{friedman_gromov-hausdorff_2024}, the authors prove Gromov-Hausdorff continuity of such families of metrics along three-dimensional conifold transitions when the resulting fibers in the smoothing are non-Kähler. Therefore, a natural question to ask is whether we can have similar polyhomogeneity results to the ones we prove in this paper in the non-Kähler case. This, of course, should involve a study of the PDEs coming from the Strominger system which are different from the complex Monge-Ampère equation we consider in our work.\\

Another problem that we plan to address in an ongoing work is to use the existence of such polyhomogeneous expansions along conical degenerations to give a precise uniform construction of the resolvent of the Hodge Laplacian $(\Delta - \lambda)^{-1}$ using the work of \cite{Albin}. Such a construction could be used to address the behavior of different types of spectral invariants along such conical degenerations.

\subsection{Organization of the paper}
The paper is organized in the following way. In Section \ref{prel}, we give general preliminaries related to b-geometry, polyhomogeneity and the blow-up in the sense of Melrose. After that, in Section \ref{conifolds}, we mention some well known results on conical and asymptotically conical manifolds and see examples of these in the context of Calabi-Yau manifolds. We prove Theorem \ref{tmA} in Section \ref{conphg}. In Section \ref{set}, we describe a general setting and give the statement of our main Theroem \ref{main}, then we give the constructions needed to apply it to get Theorems \ref{tmB} and \ref{tmC}. The proof of Theorem \ref{main} is carried out in Sections \ref{secform} and \ref{secfix}. 
\subsection*{Acknowledgement} The author is very grateful to his PhD supervisor, professor Frédéric Rochon for the many helpful discussions and suggestions related to this project. The author is also thankful to Àlvaro Sánchez Hernández and Cipriana Anghel for the fruitful discussions related to b-geometry and b-calculus as well as to the CIRGET working group at UQAM for the introduction of the work of Hein-Sun.
\section{Preliminaries on b-geometry } \label{prel}
In this section, we define various geometric objects and notions of regularity in the language of b-geometry (the 'b' stands for 'boundary'). This is consistent with the analytic and geometric methods carried out in this paper. For further details on the language of b-geometry, see the following important references \cite{melrose_atiyah-patodi-singer_1993, grieser2017scalesblowupquasimodeconstructions, melrose_calculus_1992, melrose_pseudodifferential_1990, melrose1996differential}. 
\subsection{Manifolds with corners}
Roughly, manifolds with corners are spaces modeled on 
$$
\R^{n,k} := [0, \infty)^{k} \times \R^{n-k},
$$
with a smooth structure induced from $\R^{n}$. We will also require the boundary hypersurfaces to be embedded. See \cite{melrose1996differential, melrose_atiyah-patodi-singer_1993} for details. 
\begin{defi}
\label{def:manifold-with-corners}
\hfill
\begin{itemize}
    \item A \textbf{t-manifold of dimension \(n\)} is a second-countable, Hausdorff topological space \(M\) such that for all \(p \in M\) there is a neighborhood \(p \in U\) and a homeomorphism
\[
\varphi_U: U \longrightarrow \Omega \subset \R^{n,k},
\]
where \(\Omega\) is open in \( \R^{n,k} = [0,\infty)^{k} \times \mathbb{R}^{n-k}\) and for two such neighborhoods $U$ and $V$ with non-empty intersection, the \emph{transition map}
\[
\varphi_V \circ \varphi_U^{-1} : \varphi_U(U \cap V) \longrightarrow \varphi_V(U \cap V)
\]
is a diffeomorphism that sends boundary strata to boundary strata. Such $k$ is called the \emph{codimension} of $p$, the maps $\varphi_U$ are called \emph{charts} and the collection of charts is called an \emph{atlas}.
A $C^\infty$ structure with corners on $M$ is a maximal such atlas.
\item A \textbf{boundary face of codimension $k$} of $M$ is the closure of a connected component of the set of points of codimension $k$. The set of boundary faces of codimension $k$ is denoted by $\mathcal{M}_k(M)$. 
\item A t-manifold $M$ is called a \textbf{manifold with corners} if every boundary hypersurface is an embedded t-submanifold of $M$ i.e. for all $H \in \mathcal{M}_1(M)$ and $p \in H$, there exists a chart $(\phi, U)$ based at $p$, a linear transformation $G \in GL(n, \R)$ and a neighborhood $\Omega \subset \R^n$ of $0$ such that 
$$
\restr{\phi}{H} : p \in U \cap H \rightarrow G \cdot \left( \R^{n-1, k'} \times \{ 0\} \right) \cap \Omega,
$$
for some integer $k'$. We write \emph{mwc}, short for manifold with corners.
\end{itemize}
\end{defi}
See figures $2$ and $3$ for examples and non-examples of manifolds with corners.
\begin{rem}
    The embeddedness condition on boundary hypersurfaces implies the existence of a \textbf{boundary defining function} for each $H \in \mathcal{M}_1(M)$ i.e. a smooth function $\rho_H : M \rightarrow \R_{\geq 0}$ such that $H = \{ \rho_H = 0 \}$ and $d \rho_H$ is nowhere zero on $H$. We write \textbf{bdf}, short for boundary defining function. The product of bdf's for all components of the boundary is called a \textbf{total boundary defining function}.
\end{rem}
\vspace{1cm}
\begin{figure}[H]
    \centering

\begin{tikzpicture}[scale=1.2,>=stealth]
\begin{scope}
  \draw (0,0) -- (2,0);
  \fill (0,0) circle (1.5pt);
  \node[below] at (0,0) {1};
  \node at (1,0.3) {0};
  \node at (1,-0.3) {$\mathbb{R}_+$};
\end{scope}

\begin{scope}[xshift=3cm]
  \draw[->] (0.7,0) -- (2,0);
  \draw[->] (0,0.7) -- (0,2);
  \draw (0.7,0) arc[start angle=0, end angle=90, radius=0.7];
  \fill (0,0.7) circle (1.5pt);
  \fill (0.7,0) circle (1.5pt);
  \node[below left] at (0,0.7) {2};
  \node[below left] at (0.7,0) {2};
  \node at (1.2,1.2) {0};
  \node at (0.63, 0.63) {1};
  \node  at (1.35,0.2) {1};
  \node at (-0.2,1.35) {1};
\end{scope}


\begin{scope}[xshift=6cm, yshift = 0.3cm]
  \draw
    (0,0)
      .. controls (1,0.5) and (2,0.5) ..
      (3,0)
      .. controls (2,-0.5) and (1,-0.5) ..
      (0,0);

  \fill (0,0) circle (1.5pt);
  \node[left] at (0,0) {2};
  \fill (3,0) circle (1.5pt);
  \node[right] at (3,0) {2};

  \node at (1.5,0.6) {1};
  \node at (1.5,-0.6) {1};

  \node at (1.5,0) {0};
\end{scope}
\end{tikzpicture}

    \caption{Examples of manifolds with corners with codimensions of faces indicated.}
    \label{fig:enter-label}
\end{figure}
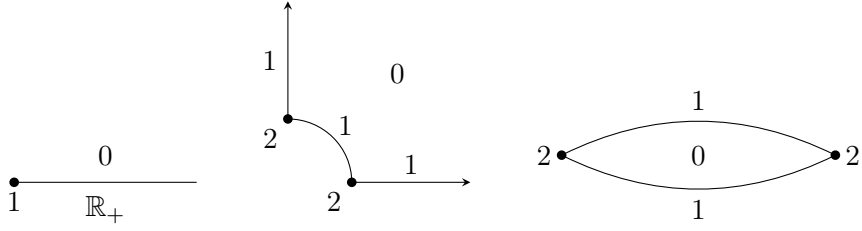

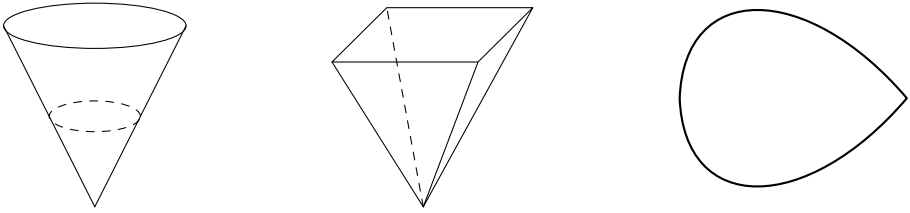
\begin{figure}[H] 
    \centering

\begin{tikzpicture}[scale=1.2, line cap=round, line join=round]

\begin{scope}[xshift = -1.4cm]
  \coordinate (A) at (0,0);

  \coordinate (Top) at (0,2);
  \coordinate (Mid) at (0,1);

  \draw (Top) ellipse [x radius=1, y radius=0.25];
  \draw[dashed] (Mid) ellipse [x radius=0.5, y radius=0.17];

  \draw (A) -- ++(-1,2);
  \draw (A) -- ++( 1,2);
\end{scope}

\begin{scope}[xshift=2.2cm]
  \coordinate (P) at (0,0);

  \coordinate (BL) at (-1,1.6); 
  \coordinate (BR) at (0.6,1.6); 
  \coordinate (TR) at (1.2,2.2); 
  \coordinate (TL) at (-0.4,2.2); 

  \draw (TL)--(TR)--(BR)--(BL)--cycle;

  \draw (P)--(BL);
  \draw (P)--(BR);
  \draw (P)--(TR);
  \draw[dashed] (P)--(TL);
\end{scope}

\begin{scope}[xshift=5cm, yshift = 1.2cm]
  \path[draw=black, line width=0.8pt]
    (0.01,0)
      .. controls (0.06,1.3) and (1.4,1.3) ..
      (2.5,0)
      .. controls (1.4,-1.3) and (0.06,-1.3) ..
      cycle;
\end{scope}

\end{tikzpicture}

    \caption{These are not manifolds with corners. The cone and the pyramid are not modeled on $\R^{n,k}$ near the vertex/apex. The teardrop is a t-manifold but the boundary is not embedded.}
    \label{fig:enter-label}
\end{figure}
\vspace{0.4cm}

For later reasons, we also define a notion of b-manifold.
\begin{defi}
Let $M$ be a smooth non compact manifold. We say that $M$ is a \textbf{b-manifold} if it can be compactified to a manifold with smooth closed boundary $\overline{M}$ i.e. there exists a smooth embedding $i : M \rightarrow \overline{M}$ such that $i(M) = \mathring{\overline{M}}$.
\end{defi}
\begin{rem}
\hfill
\begin{itemize}
\item Such manifolds are also called manifolds with ends: a non compact manifold $M$ of dimension $m$ is called a manifold with ends if it satisfies the following
        \begin{enumerate}
    \item There is a compact subset $K \subset M$ such that $E:=M \backslash K$ has a finite number of connected components $E_1, \ldots, E_n$ i.e. $E=\amalg_{i=1}^n E_i$.
    \item For each $i \in \{1,2,\cdots, n \}$, there is a connected $(m-1)$-dimensional compact manifold $\Sigma_i$ without boundary.
    \item There exist diffeomorphisms $\phi_i: \Sigma_i \times[1, \infty) \rightarrow \overline{E_i}$, where $\overline{E_i}$ is the closure of $E_i$ in $M$.
\end{enumerate}
\item The two definitions are equivalent due to the collar neighborhood theorem for manifolds with boundary. 
\item We choose the name b-manifold in accordance with the notions of b-geometry and b-calculus.
\end{itemize}
\end{rem}
Now, we suppose $M$ is any manifold with corners. Working within the paradigm of b-geometry, we can make use of notions previously defined in the literature.\\

The Lie algebra of \textbf{b-vector fields} on $M$ is the Lie subalgebra of vector fields on $M$ which are tangent to all the boundary faces 
$$
\mathcal{V}_b(M) := \left\{  
V \in \Gamma (M, T M) ; \hspace{0.2cm} V \hspace{0.2cm} \text{is tangent to each} \hspace{0.2cm}F \in \mathcal{M}(M) \right\}.
$$
Elements of $\mathcal{V}_b(M)$ are also sometimes called \textbf{b-derivatives}. The space of \textbf{b-differential operators of order $k$} denoted by $\mathrm{Diff}_b^k(M)$ consists of the linear maps on $C^\infty(M)$ given by a finite sum of up to $k$-fold products of elements of $\mathcal{V}_b(M)$
$$
\mathrm{Diff}_b^k(M) := \mathrm{span}_{0\leq j \leq k}\mathcal{V}_b(M)^j.
$$

In the simple case when $M$ is a manifold with boundary, if $x, y_1, y_2, \cdots y_n$ are coordinates near a component of the boundary and $x$ is a bdf for that component, then $\mathcal{V}_b(M)$ is locally spanned over $C^\infty (M)$ by the vector fields 
$$
x \partial_x , \hspace{0.1cm} \partial y_1, \partial y_2, \cdots , \partial y_n. 
$$

By the Serre-Swan theorem, there is an associated vector bundle on $M$, called the \textbf{b-tangent bundle} and denoted by \textbf{$^{b} T M$} such that $\mathcal{V}_b(M) \cong \Gamma (M, ^{b} T M)$. We denote its dual by $^{b}T^{*} M$. Therefore, when $M$ is a manifold with boundary, sections of $^{b}T^{*} M$ are locally spanned by 
$$
\frac{dx}{x} , \hspace{0.1cm} d y_1, d y_2, \cdots , d y_n. 
$$
We also define the \textbf{b-density} bundle $^{b} \Omega M := \Omega ( \hspace{0.01cm}^{b} T M)$. \\

When $M$ is a manifold with boundary and $\rho$ is a total boundary defining function, we define the following rescaled versions of the previous spaces
\begin{align*}
\mathcal{V}_c(M) & := \{ V \in \Gamma(M; TM); \hspace{0,2cm} V\in \frac{1}{\rho} \mathcal{V}_b(M) \}.\\ \mathcal{V}_{sc}(M) & := \{ V \in \Gamma(M; TM); \hspace{0,2cm} V\in \rho \mathcal{V}_b(M) \}.
\end{align*}
We denote the related bundles by $^{c} T M$ and $^{sc} T M$ respectively and we call these the \textbf{c-tangent bundle} and the \textbf{sc-tangent bundle} respectively. Similarly, we denote their duals by $^{c} T^{*} M$ and $^{sc} T^{*} M$ respectively.

\subsection{Function spaces on manifolds with corners}
In this part, we let $\overline{M}$ be a manifold with corners and $M := \overline{M} \setminus \partial \overline{M} $ its interior. We will define several function spaces on $M$ that are of interest to the analysis carried out in this work. For that, first, we give the following definition. 
\begin{defi}
    A \textbf{b-metric} is a Riemannian metric on $M$ given as the restriction of a metric on $^{b} T \overline{M}$ i.e. a smooth positive definite section of $C^\infty (\overline{M}, \hspace{0.01cm} ^{b}T \overline{M}).$\\
    In particular, if $\overline{M}$ is a manifold with boundary, a b-metric $g_b$ on $M$ is written, locally near a boundary component, as 
$$
g_b = a_{00} \left( \frac{dx^2}{x^2} \right) + \sum_{i} a_{0i} \frac{dx}{x} dy_i
 + \sum_{i,j} a_{i,j} dy_i dy_j.$$
\end{defi}

Now, let $g_b$ be a b-metric on $M$ and $E \rightarrow \overline{M}$ a Euclidean vector bundle with metric $g_E$. Using the Levi-Cevita connection for $g_b$ and a compatible connection $\nabla^E$ for $E$, we define the space of \textbf{b-conormal} sections of $E$ 
$$
C_b^k(M ; E) := \left\{ \sigma \in C^k(M ; E) ; \hspace{0.2cm} \sup_{p \in M} \left| \nabla^{j} \sigma(p) \right|_{g_b, g_E} < \infty \hspace{0.2cm} \forall j \in \{0,1, \cdots, k \} \right\},
$$
with the norm 
$$
|| \sigma ||_{b,k} := \sum_{j=0}^k \sup_{p \in M} \left| \nabla^{j} \sigma(p) \right|_{g_b, g_E},
$$
making it a Banach space. This space is independent of the choice of the b-metric on $M$ (but the norm is not). For example, $C_b^k(M )$ can be, equivalently, defined as the $k$-differentiable functions on $M$ that are bounded and continuous together with their b-derivatives of order less or equal than $k$.\\

Similarly, for $\alpha \in(0,1]$, we can also consider the \textbf{b-Hölder space} $C_b^{k, \alpha}(M ; E)$ of sections $\sigma \in$ $C_b^k(M; E)$ such that
$$
\left[\nabla^k f\right]_{b, \alpha}:=\sup \left\{\left.\frac{\left|P_\gamma\left(\nabla^k f(\gamma(0))\right)-\nabla^k f(\gamma(1))\right|}{\ell(\gamma)^\alpha} \right.; \hspace{0.1cm} \gamma \in C^{\infty}([0,1] ; M ), \gamma(0) \neq \gamma(1) \right\}<\infty
$$
where $P_\gamma: T^{*}M^{\otimes k} \otimes E |_{\gamma(0)} \rightarrow T^{*}M^{\otimes k} \otimes E|_{\gamma(1)}$ is the parallel transport along $\gamma$ and $\ell(\gamma)$ is the length of $\gamma$ with respect to the metric $g_b$. This is also a Banach space with norm given by
$$
\|f\|_{b, k, \alpha}:=\|f\|_{b, k}+\left[\nabla^k f\right]_{b, \alpha}.
$$
Finally, using a volume density $\nu_b$ associated to $g_b$, we can define $L_b^2(M ; E) := L^2 (M; E; \nu_b)$. Therefore, the \textbf{b-Sobolev space} $H_b^k(M; E)$ is given by
$$
H_b^k(M; E) := \left\{ f \in L^2(M;E) ; \hspace{0.2cm} \nabla^{j}f \in L_b^2(M ; (\hspace{0.01cm} ^{b}T^{*} \overline{M})^j \otimes E ) \hspace{0.2cm} \forall j \in \{0,1, \cdots, k \} \right\},
$$
with its associated $L^2$ Sobolev norm.
\subsubsection{Polyhomogeneous functions} \label{phgfnc}
Now, we define another space of functions known as \textit{polyhomogeneous functions}. These are b-conormal functions with a certain asymptotic expansion near the boundary. Such expansions generalize smoothness up to the boundary and appear as the boundary behavior of solutions to elliptic equations on certain singular and non-compact spaces (See Proposition \ref{phgconlap} and \ref{aclap}).  For references, see \cite{melrose_atiyah-patodi-singer_1993, melrose_calculus_1992, melrose_pseudodifferential_1990, grieser_singular_2001, grieser2017scalesblowupquasimodeconstructions}. \\

Before giving precise definitions of polyhomogeneous functions on a general manifold with corners $\overline{M}$, we describe roughly the simple setting where $\overline{M}=[0, \infty)$.

Consider functions on $(0, \infty)$ of the form 
$$
f(x) = x^z (\log x)^k, \hspace{0.1cm} (z,k) \in \C \times \N,
$$
and consider the b-derivative $x \partial_x$ acting on $f$. We get 
$$
x \partial_x f = x \partial_x (x^z (\log x)^k) = z x^z (\log x)^k + k x^z (\log x)^{k-1}. 
$$
Therefore, the space spanned by $x^z (\log x)^j$ for $j \in \{ 0,1, \cdots, k \}$ is invariant under the action of b-vector fields on $[0, \infty)$ in general. The same remains true if we allow $z$ to vary over a finite set. To be able to consider infinite sums, we need to ensure additional conditions on the index set over which $(z,k)$ varies. For that reason, notice that for  $(z_1, k_1), (z_2,k_2) \in \C \times \N$, we have 
$$
x^{z_1} (\log x)^{k_1} \in o\left( x^{z_2} (\log x)^{k_2}\right), \hspace{0.1cm} x \rightarrow 0 \iff \mathrm{Re} (z_1) > \mathrm{Re}(z_2) \hspace{0.2cm} \text{or} \hspace{0.2cm} \mathrm{Re} (z_1) = \mathrm{Re}(z_2), \hspace{0.15cm} k_1 < k_2.
$$
Therefore, to make sense of infinite sums over an index set $E$, we need to ensure that $\{ (z,k) \in E; \hspace{0.1cm}  \mathrm{Re} (z) \leq s \}$ is finite for all $s \in \R$. With that understood, a function on $(0, \infty)$ will be said to be \emph{polyhomogeneous} if it admits, near $x =0$, an asymptotic expansion (in an appropriate sense) over such an index set $E$ with terms of the form $x^z (\log x)^k,\hspace{0.1cm} (z,k) \in E$. From the comments above, the space of polyhomogeneous functions is invariant under the action of b-differential operators. But, more importantly, we also have the following converse \cite[Proposition 5.61]{melrose_atiyah-patodi-singer_1993}: if a function $f$ on $(0, \infty)$ satisfies a mild decay condition near $x=0$ ($f \in x^{\alpha} H_b^m((0,\infty))$) and $P f$ is polyhomogeneous for an elliptic b-operator $P$, then $f$ is polyhomgeneous.\\

An important fact to keep in mind is that, by Taylor's expansions, functions which are smooth up to the boundary are, in particular, polyhomogeneous with index set $E = \N \times \{ 0\}$ (we do not need to assume analyticity because we allow quickly vanishing error terms). Therefore, if $P f$ is smooth up to the boundary, for an elliptic b-operator $P$, then $f$ will not necessarily be smooth up to the boundary, but it will be polyhomogeneous with possibly logarithmic terms and complex powers of $x$ appearing. In this sense, polyhomogeneity is a generalization for smoothness up to the boundary which allows to state the elliptic regularity result mentioned above.\\

With these comments in mind, we start our definitions in the more general setting. 
\begin{defi} \label{ind}
 An \textbf{index set} $F$ is a discrete subset of $\mathbb{C} \times \mathbb{N}_0$ such that 
 \begin{enumerate}
     \item  $\left(z_j, k_j\right) \in F, \hspace{0.1cm} \left|\left(z_j, k_j\right)\right| \rightarrow \infty \Longrightarrow \operatorname{Re} z_j \rightarrow \infty$,
     \item $(z, k) \in F \Longrightarrow(z, p) \in F \hspace{0.1cm}  \forall p=0, \ldots, k$, 
     \item  $(z, k) \in F \Longrightarrow(z+p, k) \in F \hspace{0.1cm} \forall p \in \mathbb{N}$.
 \end{enumerate}

The index set $F$ is called 
\begin{itemize}
 \item \textbf{real} if
$F \subset \R \times \N_{0}$.
\item   \textbf{positive} if
$F \subset \R_{>0} \times \N_{0}$, we write: $F > 0$. 
 \item \textbf{nonnegative} if $F \subset \R_{\geq 0} \times \N_{0}$ and 
$$
(0, k) \in F \Longrightarrow k=0 .
$$
We write: $F \geq 0$.
\item \textbf{trivial} if $F = \N \times \{ 0\}$. We write $F=0$.
\item In general, if $F \subset \R \times \N$, we define $\inf F$ to be the smallest element of $F$ with respect to the order relation 
$$
(z_1,k_1) < (z_2, k_2) \iff z_1 < z_2 \hspace{0.1cm} \text{or} \hspace{0.1cm} z_1 = z_2 \hspace{0.1cm} \text{and} \hspace{0.1cm} k_1 > k_2.
$$
\item We write $F \geq m$ if $F -m := \left\{ (z-m,k) \in \C \times \N_0 \hspace{0.1cm} ; \hspace{0.1cm} (z,k) \in F   \right\}$ is non-negative.
\end{itemize}
\end{defi}
\begin{defi}
  Let $M$ be a manifold with corners. An \textbf{index family} $\mathcal{F}$ for $M$ is an assignment of an index set $\mathcal{F}(H)$ to each boundary hypersurface $H \in \mathcal{M}_1(M)$. The index family is said to be \textbf{trivial} if the index set $\mathcal{F}(H)$ is trivial for all $H \in \mathcal{M}_1(M)$.
\end{defi}
Now, let $\overline{M}$ be a manifold with corners and $M = \overline{M} \setminus \partial \overline{M}$ its interior. Denote $k = \# \mathcal{M}_1(\overline{M})$, the number of boundary hypersurfaces of $\overline{M}$ and $I := \{1,2, \cdots k \}$. Accordingly, we index the boundary hypersurfaces by $H_i$, $i \in I$. If $\mathcal{F}$ is an index family for $\overline{M}$, we define $\mathcal{F}_i$ to be the index family for $H_i$ given by 
$$
\mathcal{F}_i(H_j \cap H_i) := \mathcal{F}(H_j), \hspace{0.1cm}  \text{for all} \hspace{0.1cm} j \neq i \hspace{0.1cm} \text{such that} \hspace{0.1cm} H_i \cap H_j \neq \emptyset.
$$
We define polyhomogeneous functions as follows.
\begin{defi}
Let $\mathcal{F}$ be an index family for $\overline{M}$. We define the space of \textbf{polyhomogeneous functions with index family $\mathcal{F}$} denoted by $\mathcal{A}_{\text{phg}}^{\mathcal{F}}(\overline{M})$ inductively as follows 
    \begin{itemize}
        \item If $\mathcal{F}$ is trivial, then $\mathcal{A}_{\text{phg}}^{\mathcal{F}}(\overline{M}) := C^\infty (\overline{M})$.
\item If not, then $f \in \mathcal{A}_{\text{phg}}^{\mathcal{F}}(\overline{M})$ if $f$ is smooth on the interior $M$ and for every boundary hypersurface $H_i$, $i \in I$ and boundary defining function $\rho_{H_i}$ for $H_i$ we have
$$
 f \sim \sum_{(z, k) \in \mathcal{F}(H_i)} a_{(z, k)} \rho_{H_i}^z(\log \rho_{H_i})^k, \hspace{0.1cm} a_{z, k} \in \mathcal{A}_{\text{phg}}^{\mathcal{F}_i}(H_i),
$$
here, $\sim$ means here that for all $N \in \mathbb{N}$,
$$
 f-\sum_{\substack{(z, k) \in \mathcal{F}(H_i) \\ \operatorname{Re} z \leq N}} a_{(z, k)} \rho_{H_i}^z(\log \rho_{H_i})^k \in \rho_{H_i}^N C_b^{\infty}(M).
$$
        
    \end{itemize}
\end{defi}
\begin{rem} The following remarks are true for any manifold with corners, but, for convenience, we suppose that $\overline{M}$ is a manifold with boundary and that its boundary $\partial \overline{M}$ has only one connected component with bdf $\rho$ and associated index set $F$. From Definition \ref{ind}
\begin{itemize}
\item Condition \textbf{1)} implies that, for a polyhomogeneous function, all but a finite number of terms in the expansion vanish at any order at $\rho=0$.
\item Condition \textbf{2)} is required to insure the space of polyhomogeneous functions with a given index set is invariant by a change of the boundary defining function $\rho$.
\item Condition \textbf{3)} makes $\mathcal{A}_{\text {phg }}^F(\overline{M})$ a $C^{\infty}(\overline{M})$-module. Therefore, for a vector bundle $E$ on $\overline{M}$, we can define the space of polyhomogeneous sections of $E$ with index set $F$ by: 
$$\mathcal{A}_{\text {phg }}^F(\overline{M} ; E) := \mathcal{A}_{\text {phg }}^F(\overline{M}) \otimes_{C^{\infty}(\overline{M})} C^{\infty}(\overline M ; E). $$
\item A polyhomogeneous function with nonnegative index set is, in particular, b-conormal. More generally, if $s = \inf F$ then $\mathcal{A}_{\text {phg }}^F(\overline{M}) \subset x^{s - \varepsilon} C_{b}^{\infty}(\overline M)$ for every $\varepsilon>0$. 
\item Since, for $F=0$, $\mathcal{A}_{\text {phg }}^F(\overline{M})=C^{\infty}(\overline M) $, polyhomogeneous functions are seen as a generalization of functions smooth up to the boundary. 
\item For the empty index set $F= \emptyset$, we have
 $\mathcal{A}_{\text {phg }}^F(\overline{M})=\dot{C}^{\infty}(\overline M) $, the set of functions vanishing, with all their derivatives, at the boundary. These are also refered to as Schwartz functions.
 \item Sometimes, we will use the notation $\mathcal{A}_{phg}$ for polyhomogeneous functions without prescribing the index set. In the case of a manifold with boundary, we also use $\mathcal{A}_{phg \geq0}$ and $\mathcal{A}_{phg > 0}$ for polyhomogeneous functions with nonnegative and positive index set respectively.
\end{itemize}
\end{rem}

In \cite[Proposition 5.27]{melrose_atiyah-patodi-singer_1993}, the author gives the following correspondence between polyhomogeneous functions and meromorphic functions under the Mellin transform  
\begin{pr}
Let $Y$ be a closed compact manifold  and let $\mathcal{E}=(E, \emptyset)$ be the pair of index sets for $[-1,1] \times Y$ which assigns index set $E$ to $\{-1\} \times Y$ and the empty index set $\emptyset$ to $\{1\} \times Y$, then the Mellin transform
$$
u_M(\lambda, y)=\int_0^{\infty} x^{-i \lambda} u\left(\frac{x-1}{x+1}, y\right) \frac{d x}{x}
$$
gives an isomorphism from $\mathcal{A}_{\mathrm{phg}}^{\mathcal{E}}([-1,1] \times Y)$ to the space of meromorphic functions with values in $C^{\infty}(Y)$ having poles of order $k$ only at points $\lambda=-i z \in \mathbb{C}$ such that $(z, k-1) \in E$ and satisfying for each large $N$: 

 $$\hspace{0.1cm}\left\|u_M(\lambda, \cdot)\right\|_N \leq C_N(1+|\lambda|)^{-N} \hspace{0.1cm} \text{in}  \hspace{0.1cm} |\operatorname{Im} \lambda| \leq N,|\operatorname{Re} \lambda| \geq C_N$$
 where $\|\cdot\|_N$ is a norm on $C^N(Y)$.
 \end{pr}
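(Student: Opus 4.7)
The plan is to use the change of variables $x = \frac{1+t}{1-t}$ (equivalently $t = \frac{x-1}{x+1}$) to identify $[-1,1] \times Y$ with the compactification of $(0,\infty)_x \times Y$ having $\{x=0\} \simeq \{t=-1\}$ and $\{x=\infty\} \simeq \{t=1\}$. A polyhomogeneous function with index family $(E, \emptyset)$ then corresponds to a smooth function $u(x, y)$ on $(0,\infty) \times Y$ which admits, as $x \to 0$, a polyhomogeneous expansion $u \sim \sum_{(z,k)\in E} a_{z,k}(y) x^z (\log x)^k$ with $a_{z,k} \in C^\infty(Y)$, and which is Schwartz as $x \to \infty$. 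Writing $\alpha_0 := \min_{(z,k) \in E} \operatorname{Re}(z)$, I would first verify that the Mellin integral
$$
u_M(\lambda, y) = \int_0^\infty x^{-i\lambda} u(x,y) \frac{dx}{x}
$$
converges absolutely and locally uniformly in the half-plane $\{\operatorname{Im}\lambda > -\alpha_0\}$ and defines a holomorphic $C^\infty(Y)$-valued function there, with smoothness and the interchange of $y$-differentiation and integration following from compactness of $Y$ together with uniform control on the asymptotic expansion of $y$-derivatives of $u$.

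To obtain the meromorphic extension and the precise pole structure, I would fix a cutoff $\chi \in C_c^\infty([0,\infty))$ with $\chi \equiv 1$ near $x=0$ and decompose, for each $N \in \mathbb{R}$,
$$
u(x,y) = \sum_{\substack{(z,k) \in E \\ \operatorname{Re} z \leq N}} \chi(x)\, a_{z,k}(y)\, x^z (\log x)^k + r_N(x,y),
$$
where $r_N$ vanishes to order $N$ at $x=0$ and is Schwartz at infinity. The Mellin transform of $r_N$ is holomorphic in $\{\operatorname{Im}\lambda > -N\}$ by the previous step, while a direct computation (differentiating $s \mapsto \int_0^\infty \chi(x) x^{s-1}\, dx$ a total of $k$ times with respect to $s$, then substituting $s = -i\lambda + z$) yields
$$
\int_0^\infty \chi(x)\, x^{-i\lambda + z}(\log x)^k \frac{dx}{x} = \frac{(-1)^k\, k!}{(-i\lambda + z)^{k+1}} + h(\lambda),
$$
with $h$ entire in $\lambda$. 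Letting $N \to \infty$ and using condition $(1)$ of Definition \ref{ind} to ensure only finitely many poles appear in each strip, this shows that $u_M$ extends meromorphically to $\mathbb{C}$ with poles of order exactly $k$ at $\lambda = -iz$ for each $(z, k-1) \in E$, and no other poles.

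For the rapid decay estimate, I would exploit the identity $\bigl((x\partial_x)^N u\bigr)_M(\lambda) = (i\lambda)^N u_M(\lambda)$, which follows by integration by parts (legitimate since $u$ is Schwartz at infinity and the boundary contribution at $x=0$ vanishes in the convergence region). Since $(x\partial_x)^N u$ is polyhomogeneous with the same index set $E$ by conditions $(2)$ and $(3)$ of Definition \ref{ind}, applying the convergence bound from the first step to it gives, in any $C^N(Y)$-seminorm, a uniform bound on $(i\lambda)^N u_M(\lambda, \cdot)$ on strips $|\operatorname{Im}\lambda| \leq N$ away from the (finitely many) poles there, yielding the desired estimate $\|u_M(\lambda, \cdot)\|_N \leq C_N(1+|\lambda|)^{-N}$ for $|\operatorname{Im}\lambda| \leq N$ and $|\operatorname{Re}\lambda| \geq C_N$.

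Finally, for surjectivity, given a meromorphic $v(\lambda, y)$ with the prescribed pole structure and rapid decay, I would define $u(x,y) := \frac{1}{2\pi} \int_{\operatorname{Im}\lambda = c} x^{i\lambda} v(\lambda, y)\, d\lambda$ for $c$ sufficiently large, and then shift the contour downward across each pole at $\lambda = -iz$, picking up residues that produce exactly the terms $a_{z,k}(y) x^z (\log x)^k$ with $(z,k) \in E$ in the polyhomogeneous expansion of $u$, the rapid decay estimate ensuring that the shifted contour integral yields a remainder of arbitrarily high order. The main obstacle lies in carrying out the third step uniformly: the Paley--Wiener-type decay must be controlled jointly in $y \in Y$, in the order of $\lambda$- and $y$-differentiation, and across all strips simultaneously, and one must verify that the one-to-one correspondence between poles of $u_M$ and asymptotic terms of $u$ is maintained exactly as $N \to \infty$, matching every entry of $E$ without spurious poles or missing residues.
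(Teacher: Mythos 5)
The paper does not prove this statement at all: it is quoted verbatim from Melrose's book (\cite[Proposition 5.27]{melrose_atiyah-patodi-singer_1993}), so there is no in-paper proof to compare against. Your argument is, in outline, exactly the standard proof of that cited result, and it is correct: the change of variables $t=\frac{x-1}{x+1}$ correctly translates the index family $(E,\emptyset)$ into a polyhomogeneous expansion at $x=0$ together with Schwartz decay at $x=\infty$; the cutoff decomposition plus the computation $\int_0^\infty \chi\, x^{-i\lambda+z}(\log x)^k\,\frac{dx}{x}=\frac{(-1)^k k!}{(-i\lambda+z)^{k+1}}+h(\lambda)$ gives the meromorphic continuation with the right pole orders; the identity $\bigl((x\partial_x)^N u\bigr)_M=(i\lambda)^N u_M$ together with invariance of the index set under $x\partial_x$ gives the decay on strips; and contour shifting in the inverse Mellin transform gives surjectivity. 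The only imprecision worth noting is that the poles at $\lambda=-iz$ have order \emph{at most} $k+1$ for $(z,k)\in E$ (a coefficient $a_{z,k}$ may vanish), which is what the "only at" phrasing of the statement requires; this does not affect the isomorphism. The uniformity issues you flag at the end (joint control in $y$, in the differentiation order, and across strips) are genuinely the only remaining routine work, and they are handled by applying your strip estimate to $y$-derivatives of $u$, using compactness of $Y$.
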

\subsection{b-fibrations and the push-forward theorem}
In the following, we define the notion of b-fibrations on manifolds with corners that will be of importance to the general setting we consider in Section \ref{set}. 
\begin{defi}
\label{def:b-map}
A smooth map 
\[
f: M \,\longrightarrow\, N
\]
between manifolds with corners is called a \textbf{b-map} if, for each boundary hypersurface $H_j \in \mathcal{M}_1(N)$, there exist nonnegative integers $e_f(G_i, H_j)\ge 0$ and a nowhere-vanishing smooth function $u_j$ on $M$ such that
\[
\rho_{H_j}\bigl(f(p)\bigr) 
\;=\; 
u_j(p)\,\prod_{G_i\in \mathcal{M}_1(M)} \bigl(\rho_{G_i}(p)\bigr)^{e_f(G_i, H_j)}
\hspace{0.1cm}
\text{for all } p \in M,
\]
where $\rho_{H_j}$ and $\rho_{G_i}$ are boundary defining functions for $H_j \in \mathcal{M}_1(N)$ and $G_i\in \mathcal{M}_1(M)$ respectively. In particular, the preimage of any boundary hypersurface of $N$ is a union of boundary hypersurfaces of $M$.
\end{defi}
\begin{rem}
   If $f : M \rightarrow N$ is a b-map, then there is a naturally induced bundle map $^{b}f_{*} : \hspace{0.01cm} ^{b}TM \rightarrow \hspace{0.01cm} ^{b}TN$  which agrees with the usual differential of $f$ over the interior of $M$.
\end{rem}
\begin{defi}
\label{def:b-fibration}
Let  
\[
f: M \,\longrightarrow\, N
\]
be a b-map between manifolds with corners. We say that $f$ is a \textbf{b-submersion} if $^{b}f_* $ is surjective at any point. We say that it's \textbf{b-normal} if the natural restriction $\restr{^{b}f_*}{p} : \hspace{0.01cm} ^{b} N_{p} M \rightarrow \hspace{0.01cm} ^{b} N_{f(p)} N$ is surjective for all point $p \in M$. $f$ is called a \textbf{b-fibration} if it is both a b-submersion and b-normal.
\end{defi}
 If $f : X \rightarrow Y$ is a b-fibration, the map which sends each b-density to its integral over the fibers of $f$: 
 $$
 \mu \in \Gamma(X; \hspace{0.01cm} ^{b}\Omega X) \longrightarrow f_{*}\mu \in \Gamma(Y; \hspace{0.01cm} ^{b}\Omega Y) : q \in Y \rightarrow f_{*}\mu (q) =\int_{f^{-1}(\{ q \})} \mu 
 $$
 is called the \emph{push-forward map} and is denoted by $f_{*}$. Now, we mention an important result due to Melrose \cite{melrose_calculus_1992} which describes the behavior of polyhomogeneous b-densities under the push-forward map. 
 \begin{defi}
     Let $f : X \rightarrow Y$ be a b-fibration and $K$ an index family for $X$. We define $f_{\#} K$ to be the index family for $Y$ given by 
     $$
f_{\#} K(H):=\overline{\bigcup}_{\substack{G \in M_1(X) \\ e_f(G, H) \neq 0}}\left\{\left.\left(\frac{z}{e_f(G, H)}, p\right) \right\rvert\,(z, p) \in K(G)\right\}
$$
for all $H \in \mathcal{M}_1(Y)$, where the extended union used above is defined by 
$$
K \overline{\cup} I=K \cup I \cup\left\{\left(z, p^{\prime}+p^{\prime \prime}+1\right) \mid\left(z, p^{\prime}\right) \in K,\left(z, p^{\prime \prime}\right) \in I\right\},
$$
for any two index sets $K$ and $I$.\\
We also define the null-set of $f$ by 
$$
\mathrm{null}(e_f) = \{ G \in \mathcal{M}_1(X) \hspace{0.2cm} | \hspace{0.1cm} e_f(G,H) =0 \hspace{0.1cm} \forall H \in \mathcal{M}_1(Y)\}.
$$
 \end{defi}
\begin{pr}[Melrose push-forward theorem] \label{fwd}
If $f : X \rightarrow Y$ is a b-fibration between manifolds with corners and $K$ is an index family for $X$ such that 
$$
G \in \mathrm{null}(e_f) \implies \mathrm{Re} (K(G)) >0,
$$
then the push-forward map $f_{*}$ induces a map 
$$
f_{*} : \mathcal{A}_{phg}^K(X; \hspace{0.01cm} ^{b} \Omega X) \longrightarrow \mathcal{A}_{phg}^{f_{\#} K}(Y; \hspace{0.01cm} ^{b} \Omega Y). 
$$
\end{pr}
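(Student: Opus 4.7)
The plan is to establish the result by localizing with a partition of unity on $X$ and then reducing the problem to a short list of elementary b-fibrations for which the push-forward can be computed explicitly. Since polyhomogeneity is a local condition near each boundary face and the set of polyhomogeneous b-densities is a $C^\infty(\overline{X})$-module, it suffices to work with $\mu \in \mathcal{A}_{phg}^K(X; \hspace{0.01cm}^b\Omega X)$ supported in a small coordinate neighborhood of a point $p \in X$, with $f(p) = q \in Y$. We then invoke a local normal form for b-fibrations: near $p$ and $q$ one can choose adapted coordinates so that if $x_1,\dots,x_k$ are bdfs on $X$ and $\rho_1,\dots,\rho_l$ are bdfs on $Y$, then
$$
f^*\rho_j \;=\; u_j(x,y)\prod_{i=1}^k x_i^{e_f(G_i, H_j)},\qquad u_j > 0 \text{ smooth,}
$$
and the remaining interior coordinates of $Y$ are smooth submersive functions of $(x,y)$. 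The b-submersion and b-normality hypotheses translate, in these coordinates, into the statement that the matrix of exponents $(e_f(G_i,H_j))$ has rank equal to the number of bdfs of $Y$ and that each row has at most one nonzero entry — equivalently, each $G_i \not\in \mathrm{null}(e_f)$ maps to a unique $H_j$, while the $G_i \in \mathrm{null}(e_f)$ are ``fiber boundaries''.

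Using this normal form, any b-fibration factors, up to b-diffeomorphism, as a composition of three elementary types of maps: (i) projections $M\times Z\to M$ where $Z$ is a closed manifold, (ii) projections along an interior direction $[0,\infty)^k\times\mathbb{R}\to [0,\infty)^k$, and (iii) ``fiber-boundary'' projections $[0,\infty)^k\times[0,\infty)_s\to[0,\infty)^k$ where $s$ corresponds to some $G_i\in\mathrm{null}(e_f)$, together with the ``monomial'' b-fibrations $[0,\infty)^k\to [0,\infty)^l$ given by $\rho_j = \prod_i x_i^{e_{ij}}$. Since push-forward is functorial and the class $f_\# K$ is compatible with composition (this needs a small separate verification), it suffices to prove the statement for each factor. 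For (i) the result is immediate from Fubini. For (ii) the fiber integral of a polyhomogeneous b-density in $(x,y)$ against $dy$ is again polyhomogeneous in $x$ with the same index set. For (iii) the integrability $\int_0^\infty a(x,s)\frac{ds}{s}$ is guaranteed precisely by the hypothesis $\mathrm{Re}\, K(G_s) > 0$ on null hypersurfaces, and smoothness up to the boundary is preserved.

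The essential content, and the main obstacle, lies in the monomial b-fibrations. Here the computation reduces to a Mellin-type calculation: pulling back $\frac{d\rho_j}{\rho_j}$ gives $\sum_i e_{ij}\frac{dx_i}{x_i}$, and after a triangular change of variables one is left with integrals of the form
$$
\int a(x)\, x_1^{z_1}(\log x_1)^{p_1}\cdots x_k^{z_k}(\log x_k)^{p_k}\,\frac{dx_{i_1}}{x_{i_1}}\cdots\frac{dx_{i_r}}{x_{i_r}},
$$
taken along the fibers of the monomial map. Evaluating such an integral in a single fiber direction produces a polyhomogeneous expansion in the image bdf with index set $\{(z/e_f(G,H),p)\}$, and when two boundary hypersurfaces $G, G'$ contribute terms with the same exponent $z/e_f(G,H) = z'/e_f(G',H)$ the residue calculus forces the logarithmic order to add, yielding exactly the extended union rule $K \overline{\cup} I$ in the definition of $f_\# K$. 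Iterating this across all boundary hypersurfaces of $X$ produces the claimed index family on $Y$, and b-conormality of the error terms is preserved by standard Taylor remainder estimates, completing the argument.
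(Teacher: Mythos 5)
The paper does not actually prove this proposition: it is quoted verbatim from Melrose's work on the calculus of conormal distributions \cite{melrose_calculus_1992} and used as a black box, so there is no in-paper argument to compare yours against. Measured against the standard proof in the literature (Melrose, and the exposition of Grieser--Gruber), your outline is faithful: localization by a partition of unity, the local normal form for b-fibrations (b-normality forcing each boundary hypersurface of $X$ to hit at most one boundary hypersurface of $Y$, so the exponent matrix has at most one nonzero entry per row), reduction to the model maps, the role of $\mathrm{Re}\,K(G)>0$ on null faces as an integrability condition for $\int a\,\tfrac{ds}{s}$ near $s=0$, and the Mellin-type computation for monomial maps producing the exponents $z/e_f(G,H)$ — these are exactly the ingredients of the known proof.

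That said, as written this is an outline rather than a proof, and three points deserve to be flagged. First, the local normal form and the factorization of a b-fibration into elementary pieces is itself a nontrivial structure theorem which you assert; likewise the compatibility of $K\mapsto f_\#K$ with composition of b-fibrations, which you correctly identify as needing "a small separate verification," is where a careless decomposition can go wrong. Second, in the coincidence case $z/e_f(G,H)=z'/e_f(G',H)$ the extended union produces the pair $(z,p'+p''+1)$: the extra $+1$ comes from the one-dimensional fiber integral $\int (\log t)^{p'}(\log\rho-\log t)^{p''}\,\tfrac{dt}{t}$, which is a polynomial of degree $p'+p''+1$ in $\log\rho$; saying "the logarithmic orders add" undersells this and should be made precise, since it is exactly what forces the $\overline{\cup}$ rather than the plain union in the definition of $f_\#K$. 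Third, one must also check that the coefficient functions of the resulting expansion on $Y$ are themselves polyhomogeneous at the remaining faces with the induced index sets, and that the remainder estimates are uniformly b-conormal — you gesture at this with "standard Taylor remainder estimates," which is acceptable for a sketch but is where most of the technical work in Melrose's proof actually sits.
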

\subsection{Blow-up in the sense of Melrose} \label{blow}
Next, we define blow-ups in the sense of Melrose. These are blow-ups in the real sense which correspond to working in polar coordinates around the considered submanifold. They can be used to resolve functions, spaces or vector fields. The resulting geometry is that of manifolds with corners. For references, we mention the following main works \cite{melrose_calculus_1992, melrose_atiyah-patodi-singer_1993, melrose_pseudodifferential_1990}.\\

A blow-up is specified by two pieces of information: a space and a blow-down map. First, we start by defining the non-weighted blow-up.  The \emph{blow-up of the origin in \(\mathbb{R}^{n,k} = [0, \infty)^k \times \R^{n-k}\)} denoted by $[\mathbb{R}^{n,k} ; \{0\}]$ is given by the space 
$$
[\mathbb{R}^{n,k} ; \{0\}] := S^{n-1,k} \times [0, \infty)
$$
as a manifold with corners, where $S^{n-1,k} := S^{n-1} \cap \R^{n,k}$, together with a blow-down map 
\begin{align*}
\beta : [\R^{n,k};0] \cong S^{n-1,k} \times \R_{\geq 0} &\rightarrow \R^{n,k} \\ (\omega,r) & \rightarrow r\cdot \omega.
\end{align*}

In other words, when restricted, $\beta$ induces a diffeomorphism $\beta : [\R^{n,k};0] \setminus S^{n-1,k} \times \{ 0\} \rightarrow \R^{n,k} \setminus \{ 0\}$
and the origin is replaced by \(S^{n-1,k}\), the space of possible directions in which one can approach it in $\R^{n,k}$.\\

This blow-up can be equivalently defined in the following way:
$$
[\R^{n,k};0] : = \left( [0, \infty) \times \R^{n,k} \right) \setminus \left( [0, \infty) \times \{ 0\}\right) / \R_{>0},
$$
with respect to the $\R_{>0}$-action given by 
$$
t \cdot (x_0, x) = \left( t^{-1} x_0, t x_1, t x_2, \cdots, t x_n \right),
$$
for $t \in \R_{>0}$ and $(x_0, x) \in [0, \infty) \times \R^{n,k}$. The corresponding blow-down map is given by 
\begin{align*}
\beta : [\R^{n,k};0] &\rightarrow \R^{n,k} \\ [x_0 : x] & \rightarrow (x_0 x_1, x_0 x_2, \cdots, x_0 x_n ).
\end{align*}

In our applications, the $\R_{>0}$-action we have is not exactly the same as the action described above, but rather, there are additional weights. For that reason, we also need to introduce \emph{weighted blow-ups} following the definition introduced in \cite{conlon2023warped}. 

\begin{defi} \label{weighted blow-up}
With respect to a choice of weight $w \in\left(\mathbb{R}_{>0}\right)^n$, the \textbf{weighted blow-up} of $\{0\}$ in $\mathbb{R}^{n,k}$ is the quotient
$$
\left[\mathbb{R}^{n,k} ;\{0\}\right]_w:=\left(\left([0, \infty) \times \mathbb{R}^{n,k}\right) \backslash([0, \infty) \times\{0\})\right) / \mathbb{R}_{>0}
$$
with respect to the $\mathbb{R}_{>0}$-action given by
$$
t \cdot\left(x_0, x\right)=\left(t^{-1} x_0, t^{w_1} x_1, \ldots, t^{w_n} x_n\right)
$$
for $t \in \mathbb{R}_{>0}$and $\left(x_0, x\right) \in[0, \infty) \times \mathbb{R}^{n,k}$. The corresponding blow-down map $\beta:\left[\mathbb{R}^{n,k} ;\{0\}\right]_w \rightarrow \mathbb{R}^{n,k}$ is given by
$$
\beta\left(\left[x_0: x\right]\right)=\left(x_0^{w_1} x_1, \ldots, x_0^{w_n} x_n\right)
$$
where $\left[x_0: x\right]_w$ denotes the class in $\left[\mathbb{R}^{n,k} ;\{0\}\right]_w$ corresponding to $\left(x_0, x\right) \in\left([0, \infty) \times \mathbb{R}^{n,k}\right) \backslash([0, \infty) \times\{0\})$.\\

The weighted blow-up $\left[\mathbb{R}^{n,k} ;\{0\}\right]_w$ is naturally a manifold with corners diffeomorphic to $\mathbb{S}^{n-1,k} \times[0, \infty)$ via the map
$$
\begin{array}{rllc}
F: \hspace{0.1cm} \mathbb{S}^{n-1,k} \times[0, \infty) & \rightarrow & {\left[\mathbb{R}^{n,k} ;\{0\}\right]_w} \\
(\omega, t) & \mapsto & {[t: \omega]_w}
\end{array}
$$
\end{defi}
\begin{rem} \label{projective coordinates}
    The weighted blow-up $\left[\mathbb{R}^{n,k} ; \{0\}\right]_w$ admits an important set of coordinate systems known as \textbf{projective coordinates}. Consider a coordinate system $(x_1, x_2, \cdots, x_n)$ on $\R^{n,k}$. For $x_1 \neq 0$, we can consider the following coordinates on $\left[\mathbb{R}^{n,k} ; \{0\}\right]_w$ given by $$\left[|x_1|^{\frac{1}{w_1}} : \pm 1 : |x_1|^{- \frac{w_2}{w_1}} x_2 : |x_1|^{- \frac{w_3}{w_1}} x_3 : \cdots : |x_1|^{- \frac{w_n}{w_1}} x_n \right].$$
    Therefore, if $\beta$ is the blow-down map described above, then $$\beta \left( \left[|x_1|^{\frac{1}{w_1}} : \pm 1 : |x_1|^{- \frac{w_2}{w_1}} x_2 : |x_1|^{- \frac{w_3}{w_1}} x_3 : \cdots : |x_1|^{- \frac{w_n}{w_1}} x_n \right] \right) = (x_1, x_2, \cdots, x_n)$$ 
    Obviously, one can consider similar coordinates for every $x_i \neq 0$. This, in particular, implies that, given a function $f$ on $\R^{n,k}$, $\beta^*f:= f \circ \beta$ is smooth (resp. polyhomogeneous) on $\left[\mathbb{R}^{n,k} ; \{0\}\right]_w$ if and only if $f$ is smooth (resp. polyhomogeneous) as a function of $|x_i|^{- \frac{w_1}{w_i}} x_1, |x_i|^{- \frac{w_2}{w_i}} x_2, \cdots, x_i^{\frac{1}{w_i}}, \cdots, |x_i|^{- \frac{w_n}{w_i}} x_n$ for every $i =1,2, \cdots, n$.
\end{rem}

\section{Conifolds} \label{conifolds}
\begin{defi} \label{conical}
The \textbf{Riemannian cone} over a given closed connected Riemannian manifold $\left(\Sigma, g_\Sigma\right)$ is the Riemannian manifold $\left(C, g_C\right)$ where $C=\mathbb{R}_{>0} \times \Sigma$ and $g_C=d x^2+x^2 g_\Sigma$, with $x: C \rightarrow \mathbb{R}_{>0}$ the projection onto the first factor. We often write $C=C(\Sigma)$ and call $\Sigma$ the link of $C$.
\end{defi}
\begin{rem}
    We allow ourselves to include or exclude the vertex of the cone depending on the context. By the \emph{vertex} of the cone, we mean a point denoted by $o$ attached to the cone at $\{x=0\}$. We also sometimes consider the cone to be the b-manifold $\R_{\geq 0} \times \Sigma$. This will always be clear from the situation.
\end{rem}

\begin{defi} \label{def : conifolds}
Let $M$ be a b-manifold with compactification $\overline{M}$. Let $g$ be a Riemannian metric on $M$. Choose a connected component of $\partial \overline{M}$ that we denote by $\Sigma$.
\begin{itemize}
\item 
We say that $g$ is a \textbf{product cone metric near $\Sigma$} if, in a collar neighborhood $U$ of $\Sigma$, it is isometric to a neighborhood of the vertex $\{ o\}$ in a Riemannian cone over $\Sigma$, i.e. there is a boundary defining function for $\Sigma$ denoted by $x$ and $\restr{g}{U} = dx^2 + x^2 g_{\Sigma}$.\\ 
We say that $g$ is a \textbf{conical metric near $\Sigma$} if there exists a product cone metric $g_p$ near $\Sigma$  and $\nu >0$ such that $g - g_p \in x^{\nu} C^{\infty}_b(M; \hspace{0.01cm} ^{c} T^{*} \overline{M} \otimes \hspace{0.01cm} ^{c} T^{*} \overline{M})$.

\item We say that $g$ is a \textbf{product scattering metric near $\Sigma$} if, in a collar neighborhood $U$ of $\Sigma$, $g$ is isometric to a neighborhood of infinity in a Riemannian cone over $\Sigma$ i.e. there is a boundary defining function for $\Sigma$ denoted by $x$ such that $\restr{g}{U} = \frac{dx^2}{x^4} + \frac{g_{\Sigma}}{x^2}$. This is equivalent to writing $\restr{g}{U} = dr^2 + r^2 g_{\Sigma}$ for $r:= \frac{1}{x}$.\\
We say that $g$ is an \textbf{asymptotically conical metric near $\Sigma$} if there exists a warped product scattering metric $ g_p$ near $\Sigma$ and $\nu >0$ such that $g - g_p \in x^{\nu} C^{\infty}_b(M; \hspace{0.01cm} ^{sc} T^{*} \overline{M} \otimes \hspace{0.01cm} ^{sc} T^{*} \overline{M}) $. 
\end{itemize}
In either of the above situations, we say that $(M,g)$ is a \textbf{conifold near $\Sigma$ modeled on the cone $\left(C(\Sigma), g_{C(\Sigma)}\right)$} and we call $\nu$ the convergence rate.\\
We say that $g$ is a \textbf{polyhomogeneous} conical (resp. asymptotically conical) metric if it is conical (resp. asymptotically conical) in the above sense and it is polyhomogeneous as a section of $^{c} T^{*} \overline{M} \otimes \hspace{0.01cm} ^{c} T^{*} \overline{M}$ (resp. $^{sc} T^{*} \overline{M} \otimes \hspace{0.01cm} ^{sc} T^{*} \overline{M}$).  
\end{defi} 
\subsection{Analysis on conifolds} \label{anacon}
There has been extensive study of analysis on conical and asymptotically conical manifolds, see for example \cite{cheeger_spectral_1983, joyce_special_2004, pacini_desingularizing_2013, lockhart_elliptic_1985, marshall_deformations_nodate, hein_calabi-yau_2017, Rafe_elliptic}. In the following, we mention some mapping properties of the Laplacian on these manifolds.
\subsubsection*{Conical Laplacian:}
Let $M_1$ be a b-manifold of dimension $n \geq 3$ and compactification $\overline{M_1}$. For convenience, we suppose that $ \partial \overline{M_1}$ has only one connected component $\Sigma$ and we let $x_1$ be a bdf for $\Sigma$. Let $g_c$ be a polyhomogeneous conical metric on $M$ modeled on a cone $(C, g_0)$ through an isomorphism $\psi : V \subset C \rightarrow U$ where $U$ is a neighborhood of $\Sigma$. Let $\Delta_c$ be the Laplacian operator associated to $g_c$. We have the following mapping result \cite{hein_calabi-yau_2017}: 
\begin{pr} \label{conlap} Consider $\alpha \in (0,1)$ and an integer $k > \frac{3n -1 }{2}$. Let $\mathcal{P}$ be the set of indicial roots of $\Delta_{g_0}$ i.e.
$$
\mathcal{P} = \left\{ - \frac{n-2}{2} \pm \sqrt{\frac{(n-2)^2}{4} + \lambda }; \hspace{0.1cm} \lambda \in \mathrm{Spec}(\Delta_L) \right\},
$$
where $\Delta_L$ is the Laplacian on the link $L$ of the cone $(C, g_0)$. 
For a weight $\nu \in (0, 2 ) \setminus \mathcal{P}$, consider the following spaces:  
$$
\mathcal{U}=\left\{u \in x_1^{\nu} C_{b}^{k+2, \alpha}\left(M_1\right) \oplus \hat{\chi}\left( \mathcal{H}_{[0, \nu)}\right) \circ \psi^{-1}; \int_{M_1} u =0\right\} 
$$ $$\mathcal{F}=\left\{f \in x_1^{\nu-2}C_{b}^{k, \alpha}\left(M_1\right) ; \int_{M_1} f =0\right\} 
$$
where $ \mathcal{H}_{[0, \nu)}$ is the vector space spanned by homogeneous harmonic functions on the cone $(C, g_0)$ with growth rate in $[0, \nu) $, $\hat{\chi}$ is a cut-off function equal to $1$ near boundary and $0$ outside a collar neighborhood. 
Then, the conical Laplacian $\Delta_c$ induces an isomorphism: 
$$
\Delta_{c} : \mathcal{U} \rightarrow \mathcal{F}.
$$ 
For a weight $2-n < \nu < 0 $, we also have an isomorphism but without the additional terms coming from $ \mathcal{H}_{[0, \nu)}$.  
\end{pr}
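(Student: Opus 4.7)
The plan is to treat $\Delta_c$ as an elliptic cone operator in the b-sense, apply Melrose-style b-calculus to obtain Fredholm theory on $x_1^\nu$-weighted conormal spaces, and then use the relative index theorem to track the index shift as the weight crosses indicial roots in $\mathcal{P}$. First, I would note that $x_1^2 \Delta_c$ is a second-order b-differential operator on $\overline{M_1}$, elliptic both in the interior and in the b-sense, whose indicial operator at $\Sigma$ is the pencil $\lambda \mapsto \lambda^2 + (n-2)\lambda + \Delta_L$; this is invertible precisely when $\lambda \notin \mathcal{P}$. By the standard b-parametrix construction, for every $\nu \in \mathbb{R} \setminus \mathcal{P}$ the map
\[
\Delta_c : x_1^\nu C_b^{k+2,\alpha}(M_1) \longrightarrow x_1^{\nu-2} C_b^{k,\alpha}(M_1)
\]
is Fredholm, and any solution $u$ of $\Delta_c u = f$ in such a weighted space is polyhomogeneous at $\Sigma$, with the leading terms of its expansion being homogeneous harmonic functions on the model cone with rates in $\mathcal{P} \cap [\nu, \infty)$.

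For injectivity on $\mathcal{U}$, suppose $\Delta_c u = 0$ with $u = u_0 + \hat{\chi} h$, $u_0 \in x_1^\nu C_b^{k+2,\alpha}$ and $h \in \mathcal{H}_{[0,\nu)}$. Since harmonic functions of non-negative homogeneity on the cone are bounded near the vertex, and since $\nu > 2-n$ keeps the expansion of $u_0$ above the Green's-function rate, the polyhomogeneous expansion shows that $u$ is bounded on $M_1$ and extends continuously through the isolated singular point. Because $n \geq 3$ that singularity has vanishing harmonic capacity, so a standard removable-singularities argument together with the maximum principle on the compact space $M_1 \cup \{x\}$ forces $u$ to be constant; the normalization $\int u = 0$ then gives $u \equiv 0$. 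This argument covers both weight ranges appearing in the statement.

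For surjectivity, I would anchor the relative index calculation at a base weight $\nu_0 \in (2-n, 0) \setminus \mathcal{P}$, where $[0,\nu_0)$ is empty and no cutoff-harmonic term is needed: the kernel consists of constants by the previous argument, and by $L^2(g_c)$-self-adjointness of $\Delta_c$ the cokernel is also spanned by constants, giving Fredholm index $0$ and the claimed isomorphism on zero-mean subspaces throughout the range $\nu \in (2-n, 0)$. As $\nu$ is raised past each indicial root $\lambda \in \mathcal{P} \cap [0, \nu)$, the relative index theorem drops the Fredholm index of the underlying unweighted map by $\dim \mathcal{H}_\lambda$, so for $\nu \in (0, 2) \setminus \mathcal{P}$ the index is $-\dim \mathcal{H}_{[0, \nu)}$ and the kernel is trivial, hence the cokernel has dimension $\dim \mathcal{H}_{[0, \nu)}$. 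Adjoining $\hat{\chi} \cdot \mathcal{H}_{[0,\nu)} \circ \psi^{-1}$ to the source while imposing $\int u = 0$ balances the dimensions exactly against the codim-$1$ constraint on $\mathcal{F}$; the inclusion $\Delta_c \mathcal{U} \subset \mathcal{F}$ is automatic by integration by parts, the cone-tip boundary term vanishing thanks to $\nu > 2-n$. The main obstacle is to check that the cutoff harmonics actually surject onto the extra cokernel, which I would resolve constructively: given $f \in \mathcal{F}$, invert the indicial family $\lambda^2 + (n-2)\lambda + \Delta_L$ order by order on the formal Taylor expansion of $f$ at $\Sigma$ to produce $h \in \mathcal{H}_{[0,\nu)}$ for which $f - \Delta_c(\hat{\chi} h)$ has expansion beginning at order $\nu$, so that it lies in $x_1^{\nu-2} C_b^{k,\alpha}$ and in the image of the injective Fredholm map established above.
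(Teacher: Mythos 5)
The paper does not actually prove this proposition: it is quoted from Hein--Sun \cite{hein_calabi-yau_2017} (with the improvements cited alongside), so there is no in-paper argument to compare against line by line. Your proof follows the standard route that the cited literature takes --- Fredholmness of $\Delta_c$ at non-indicial weights via the b-parametrix, identification of the kernel by a bounded-harmonic/removable-singularity argument on the compact space, the cokernel by duality at the reflected weight, and the relative index theorem to account for crossing the indicial roots in $[0,\nu)$, with the cutoff harmonics restoring surjectivity. The overall architecture is sound, and the anchor computation at a base weight $\nu_0\in(2-n,0)$ (no indicial roots there, kernel and cokernel both reduced to constants) is exactly the right starting point.

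Two steps are treated too casually. First, well-definedness of $\Delta_c$ on $\mathcal{U}$ is not only the mean-zero condition you verify by integration by parts: you must also check that $\Delta_c\bigl(\hat\chi\, h\circ\psi^{-1}\bigr)$ lies in $x_1^{\nu-2}C_b^{k,\alpha}(M_1)$ for $h\in\mathcal{H}_{[0,\nu)}$. Since $h$ is harmonic for the model cone metric $g_0$ but not for $g_c$, near the tip this term is $(\Delta_c-\Delta_0)h$, whose weight is controlled by the convergence rate of $g_c$ to $g_0$ relative to the homogeneity of $h$; this is precisely where the hypotheses on the conical metric (and the restriction $\nu<2$) enter, and it is silently omitted. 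Second, the closing ``constructive'' step is confused: $f$ already lies in $x_1^{\nu-2}C_b^{k,\alpha}$, so there are no low-order terms in the expansion of $f$ to remove by inverting the indicial family, and inverting that family produces particular-solution terms, not elements of $\mathcal{H}_{[0,\nu)}$ (which lie in its kernel). The correct finish --- which your index count already essentially contains --- is either to solve $\Delta_c u=f$ at the lower weight $\nu_0$ and apply the partial-expansion result (the analogue of Proposition \ref{phgconlap}) to split $u=\hat\chi h+u_0$ with $h\in\mathcal{H}_{[0,\nu)}$ and $u_0\in x_1^{\nu}C_b^{k+2,\alpha}$, or to observe that injectivity on $\mathcal{U}$ forces the images of the cutoff harmonics to be linearly independent modulo the closed, codimension-$\dim\mathcal{H}_{[0,\nu)}$ image of the zero-mean part of $x_1^{\nu}C_b^{k+2,\alpha}$, whence surjectivity by the dimension count.
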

The following offers finer description of the asymptotic behaviour of solutions.
\begin{pr} \label{phgconlap}
Let $g_{\mathrm{c}}$ be a polyhomogeneous conical metric. For any weight $\nu \in \R$ (even indicial roots), if $u \in x_1^{\nu}   H_b^{k}(M_1)$ and $\Delta_c u$ is polyhomogeneous, then $u$ is polyhomogeneous. Moreover, if $u \in x_1^{\alpha+2}   C_b^{\infty}(M_1)$ satisfies
$$
\Delta_{\mathrm{c}} u=f_1+f_2 \hspace{0.1cm} \text { with } f_1 \in x_1^{\alpha+\beta} C_b^{\infty}(M_1), \hspace{0.1cm} f_2 \in \mathcal{A}_{\mathrm{phg}}^{G}\left(\overline{M_1}\right)
$$
for some $\beta>0$ and some index set $G$ such that $\inf G > \alpha$, then $u=u_1+u_2$ with $u_1 \in \bigcap_{\delta>0} x_1^{\alpha+\beta+2-\delta} C_b^{\infty}(M_1)$ and $u_2 \in \mathcal{A}_{\mathrm{phg}}^{F}(\overline{M_1})$ such that $\inf F  \geq 2+ \alpha$.
\end{pr}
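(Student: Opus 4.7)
The idea is to view $\Delta_c$ as an elliptic b-operator after multiplying by $x_1^2$. In a collar neighborhood with coordinates $(x_1,y)$, the polyhomogeneity of the conical metric $g_c$ implies
\begin{equation*}
P := x_1^2 \Delta_c = -(x_1\partial_{x_1})^2 - (n-2)(x_1\partial_{x_1}) + \Delta_\Sigma + R,
\end{equation*}
where $R \in \mathrm{Diff}_b^2(\overline{M_1})$ has polyhomogeneous coefficients of positive leading order. Thus $P$ is b-elliptic, with indicial family $I(P,\lambda) = -\lambda^2 - (n-2)\lambda + \Delta_\Sigma$, whose indicial roots coincide with the set $\mathcal{P}$.

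For the first assertion, the hypothesis reads $Pu = x_1^2 \Delta_c u \in \mathcal{A}_{phg}(\overline{M_1})$, together with $u \in x_1^\nu H_b^k(M_1)$. The polyhomogeneity of $u$ then follows from the standard boundary regularity for b-elliptic operators with polyhomogeneous coefficients (see \cite[Proposition 5.61]{melrose_atiyah-patodi-singer_1993}), whose proof relies on the meromorphic inverse of the Mellin transform of $P$ and the existence of a polyhomogeneous parametrix in the small b-calculus.

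For the second assertion, I would first build $u_2$ by a formal induction: expanding $f_2 \sim \sum_{(z,k)\in G} a_{(z,k)}(y)\, x_1^z (\log x_1)^k$, each term is lifted to a solution of $Pv = x_1^{z+2}(\log x_1)^k\, a_{(z,k)}$ by inverting $I(P,\lambda)$ at $\lambda = z+2$: when $z+2 \notin \mathcal{P}$ this produces a term $b_{(z+2,k)}(y)\, x_1^{z+2}(\log x_1)^k$, while at an indicial root the logarithmic order must be raised by one. A Borel-type asymptotic summation in the polyhomogeneous category then yields $\tilde u_2 \in \mathcal{A}_{phg}^F(\overline{M_1})$ with $\inf F \geq \alpha + 2$ and $\Delta_c \tilde u_2 = f_2 + r$, $r \in \dot C^\infty(\overline{M_1})$. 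Setting $w := u - \tilde u_2$ gives $\Delta_c w = f_1 + r$, whose right-hand side lies in $x_1^{\alpha+\beta} C_b^\infty(M_1)$ up to a Schwartz error. Applying the weighted mapping property of Proposition \ref{conlap} (or equivalently the b-Sobolev isomorphism of $\Delta_c$ at a weight close to but avoiding $\mathcal{P}$) yields $w \in x_1^{\alpha+\beta+2-\delta} C_b^\infty(M_1)$ for every $\delta>0$, and the decomposition $u = w + \tilde u_2$ is the required one.

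The principal technical point is the careful bookkeeping of indicial roots: every time a power appearing in the expansion of $f_2$ lies on $\mathcal{P}-2$, the formal inversion of $I(P,\lambda)$ has a pole and one must add a logarithmic factor, so $F$ is obtained from the shift $G+2$ by an extended union with $\mathcal{P}$-related data. The condition $\inf G > \alpha$ guarantees that only finitely many such logarithmic accumulations occur at each order, so $F$ is a bona fide index set satisfying $\inf F \geq 2+\alpha$. A minor subtlety is that the coefficients of $P$ are only polyhomogeneous rather than smooth up to $\Sigma$, but the b-calculus framework is set up to accommodate polyhomogeneous coefficients without modification of the argument.
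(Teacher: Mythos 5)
Your proposal is correct and takes essentially the same route as the paper, which observes that $x_1^2\Delta_c$ is an elliptic b-differential operator with polyhomogeneous coefficients and then cites Corollaries 3.7 and 3.8 of \cite{conlon_moduli_2015} (generalizing \cite[Proposition 5.61]{melrose_atiyah-patodi-singer_1993}) — precisely the boundary-regularity and the formal-solution-plus-weighted-remainder arguments you sketch. The only point to tidy is that indicial roots lying in $(\alpha+2,\alpha+\beta+2)$ may contribute polyhomogeneous terms to $w$ that do not decay at rate $x_1^{\alpha+\beta+2-\delta}$ and must be absorbed into $u_2$; this is exactly what the weaker index set condition $\inf F\geq 2+\alpha$ (rather than $\inf F\geq \alpha+\beta+2$) in the statement accommodates.
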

\begin{proof}
    The operator $P := x_1^2 \Delta_c$ is an elliptic b-differential operator with polyhomogeneous coefficients, therefore we get the first claim by \cite[Corollary 3.7]{conlon_moduli_2015} which generalizes \cite[Proposition 5.61]{melrose_atiyah-patodi-singer_1993} and the second claim by \cite[Corollary 3.8]{conlon_moduli_2015}. 
\end{proof}
\subsubsection*{Asymptotically conical Laplacian:}
Similarly, let $M_2$ be a b-manifold of dimension $m \geq 3$ and compactification $\overline{M_2}$. For convenience, we suppose that $ \partial \overline{M_2}$ has only one connected component $\Sigma'$ and we let $x_2$ be a bdf for $\Sigma'$. Let $g_{AC}$ be a polyhomogeneous asymptotically conical metric on $M_2$ modeled on a cone $(C, g_0)$. Let $\Delta_{AC}$ be the Laplacian operator associated to $g_{AC}$. We have the following mapping result \cite{lockhart_elliptic_1985, marshall_deformations_nodate, conlon_moduli_2015, melrose_atiyah-patodi-singer_1993}:
\begin{pr} \label{aclap}
Let $\alpha \in (0,1)$ and $k \in \N$. The asymptotically conical Laplacian $\Delta_{AC}$ is an isomorphism when acting on the following spaces:
    $$\Delta_{AC}: x_2^{ \beta } C_{b}^{k+2, \alpha}(M_2) \rightarrow x_2^{\beta + 2 } C_{b}^{k, \alpha}(M_2)$$
    provided the weight $\beta \in (0,m-2)$. Moreover, if $\Delta_{AC} u$ is polyhomogeneous, then $u$ is polyhomogeneous. 
\end{pr}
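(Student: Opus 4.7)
The plan is to reduce the analysis to standard b-calculus via a rescaling, then combine Fredholm theory from the parametrix construction with a weighted injectivity argument. Write $r = 1/x_2$ near $\Sigma'$ so that $g_{AC}$ is, up to polyhomogeneous error, a product scattering metric $dr^2 + r^2 g_{\Sigma'}$. Set $P := x_2^{-2} \Delta_{AC}$. Since $\Delta_{AC}$ is an sc-differential operator of order $2$, the factor $x_2^{-2}$ converts it into an elliptic b-differential operator on $\overline{M_2}$ with polyhomogeneous coefficients, and the mapping statement becomes
\[
P : x_2^{\beta} C_b^{k+2,\alpha}(M_2) \longrightarrow x_2^{\beta} C_b^{k,\alpha}(M_2),
\]
which is exactly the setting in which b-calculus applies.

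Next, I would compute the indicial family of $P$ at $\Sigma'$. Using separation of variables for the exact cone $dr^2 + r^2 g_{\Sigma'}$ with $r = 1/x_2$, the indicial roots are exactly the numbers $\nu \in \mathbb{R}$ such that $r^{-\nu}\phi$ is a homogeneous harmonic function for some $\phi \in C^\infty(\Sigma')$ with $\Delta_{\Sigma'} \phi = \nu(\nu - (m-2))\phi$. In terms of $x_2$, these are the weights
\[
\nu_{\pm}(\lambda) = \tfrac{m-2}{2} \pm \sqrt{\tfrac{(m-2)^2}{4} + \lambda}, \qquad \lambda \in \mathrm{Spec}(\Delta_{\Sigma'}),
\]
and the interval $(0, m-2)$ contains no indicial roots (since $\lambda=0$ gives the roots $0$ and $m-2$, and all other eigenvalues push the roots outside this interval). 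Hence for every $\beta \in (0, m-2)$, the weight $\beta$ is admissible and the parametrix construction of Melrose, as in \cite{melrose_atiyah-patodi-singer_1993} and applied in \cite{lockhart_elliptic_1985, marshall_deformations_nodate}, yields a Fredholm map on the corresponding weighted b-Hölder spaces.

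Injectivity on this interval follows from a standard weighted energy argument: if $\Delta_{AC} u = 0$ and $u \in x_2^{\beta} C_b^{k+2,\alpha}(M_2)$ with $\beta > 0$, then $u$ decays at infinity and integration by parts gives $\int_{M_2} |du|_{g_{AC}}^2 \, dV_{g_{AC}} = 0$ (the boundary term at infinity vanishes because of the decay rate), so $u$ is constant and thus zero. For surjectivity, one appeals to the formal self-adjointness of $\Delta_{AC}$: the $L^2$ adjoint of $\Delta_{AC}: x_2^\beta \to x_2^{\beta+2}$ corresponds, after pairing with the b-volume form $x_2^{-m} dx_2 \wedge \mathrm{vol}_{\Sigma'}$, to the same operator acting between the weight spaces with weight $m-2-\beta$, which also lies in $(0, m-2)$; injectivity of the adjoint then forces surjectivity. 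The main obstacle here, as in \cite{conlon_moduli_2015}, is checking that the indicial analysis is unaffected by the polyhomogeneous (rather than smooth) nature of $g_{AC}$, but this only modifies the lower-order structure of $P$ and does not shift the indicial roots.

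For the final polyhomogeneity claim, the operator $P = x_2^{-2}\Delta_{AC}$ is an elliptic b-operator with polyhomogeneous coefficients, so if $u \in x_2^{\beta} H_b^k$ and $\Delta_{AC} u$ is polyhomogeneous, then $Pu$ is polyhomogeneous and the b-elliptic regularity statement of \cite[Proposition 5.61]{melrose_atiyah-patodi-singer_1993}, extended to polyhomogeneous coefficients in \cite[Corollary 3.7]{conlon_moduli_2015}, gives that $u$ itself is polyhomogeneous, in complete analogy with Proposition \ref{phgconlap}.
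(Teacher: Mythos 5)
The paper does not actually prove Proposition \ref{aclap}; it is quoted as a known result with citations to Lockhart--McOwen, Marshall, Conlon--Mazzeo--Rochon and Melrose, and your proposal is essentially a correct reconstruction of the standard argument from those references: rescale to the elliptic b-operator $x_2^{-2}\Delta_{AC}$, observe that the indicial roots in the variable $x_2$ are $\tfrac{m-2}{2}\pm\sqrt{\tfrac{(m-2)^2}{4}+\lambda}$ so that $(0,m-2)$ is free of indicial roots, get Fredholmness for these weights, and conclude by injectivity plus the duality between the weights $\beta$ and $m-2-\beta$. The one step that is stated too quickly is the injectivity argument: if $u\in x_2^{\beta}C_b^{k+2,\alpha}$ is harmonic with $\beta>0$ small, the boundary term $\int_{S_R}u\,\partial_\nu u$ is only $O(R^{\,m-2-2\beta})$, which does \emph{not} tend to zero for $\beta\le\tfrac{m-2}{2}$, so the decay rate alone does not justify the integration by parts. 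You must first upgrade the decay: since there are no indicial roots in $(0,m-2)$, a decaying harmonic function automatically satisfies $u=O(x_2^{\,m-2-\delta})$ (by the b-calculus asymptotics you invoke later, or by a separation-of-variables comparison on the end), after which the boundary term is $O(R^{-(m-2)+2\delta})\to 0$; alternatively, the strong maximum principle applied to a harmonic function tending to zero at infinity gives injectivity directly and avoids the issue. With that repair the argument is complete and consistent with the sources the paper cites.
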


\subsection{Calabi-Yau conifolds}
\begin{defi}
    A \textbf{Calabi-Yau manifold} is a complex Kähler manifold $X$ (compact or non compact) with vanishing first Chern class, i.e. $c_1(X) = 0$ in $H^2(X; \R)$. We say that $X$ is Calabi-Yau \textbf{in the strong sense} if the canonical bundle of $X$ is trivial, i.e. $K_X \cong \mathcal{O}_X$.
\end{defi}
\begin{rem}
    Since, $c_1(K_X) = - c_1(X)$, being Calabi-Yau in the strong sense implies being Calabi-Yau. Conversely, if $X$ is a compact Calabi-Yau manifold, then a positive power of the canonical bundle of $X$ is trivial, i.e. $K_X^{\otimes m} \cong \mathcal{O}_X$.
\end{rem}
By Yau's solution of Calabi's conjecture \cite{yau_ricci_1978}, every closed compact Calabi-Yau manifold has a unique Ricci-flat Kähler metric in every Kähler cohomology class. In general, we will say Ricci-flat Kähler metrics or Calabi-Yau metrics interchangeably.  If $X$ is compact Calabi-Yau and $\omega$ is any choice of a Kähler form, finding a Ricci-flat Kähler metric is equivalent to solving the complex Monge-Ampère equation:
$$
\frac{(\omega+i \partial \overline{\partial}u)^{n}}{\omega^n} = e^v
$$
where $v$ satisfies $\mathrm{Ric}(\omega)= i \partial \overline{\partial}v$ and $\int_X e^v \omega^n = \int_X \omega^n$.\\

Solving this PDE on open Calabi-Yau manifolds is generally more complicated because additional conditions on the geometry near infinity are needed. The following is a general existence theorem for projective varieties with canonical singularities \cite{eyssidieux_singular_2009}.
\begin{tm}[Eyssidieux-Guedj-Zeriahi] \label{EGZ}
 Suppose $X_0$ is the regular part of a complex projective variety $X$ with trivial canonical bundle $K_X \cong \mathcal{O}_X$ and with only canonical singularities. Then for every choice of an ample line bundle $L$ on $X$, there is a unique Kähler current $\omega_{CY} \in 2 \pi c_1(L)$  that has bounded local potentials and restricts to a smooth Ricci-flat Kähler metric on $X_0$. We refer to $X$ verifying the above conditions as a \textbf{projective Calabi-Yau variety} and to $\omega_{CY}$ as a \textbf{singular Calabi-Yau metric}.  
\end{tm}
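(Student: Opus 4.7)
The plan is to reduce the statement to a degenerate complex Monge-Ampère equation and solve it using Kolodziej-type pluripotential methods, following the strategy of extending Yau's theorem to the singular setting.

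First I would set up the equation. Since $L$ is ample on the projective variety $X$, fix a smooth reference Kähler form $\omega \in 2\pi c_1(L)$ (for instance, the Fubini-Study form pulled back via a projective embedding by $|L^{\otimes k}|$). The assumption $K_X \cong \mathcal{O}_X$ provides a nowhere vanishing holomorphic volume form $\Omega$, and the canonical-singularity hypothesis ensures that $i^{n^2}\, \Omega \wedge \overline{\Omega}$ has density $f$ in $L^p_{\mathrm{loc}}$ for some $p>1$ with respect to a smooth local reference volume obtained via local embeddings. Finding the desired Ricci-flat metric on $X_0$ then amounts to solving
\begin{equation*}
(\omega + i\partial\overline{\partial} u)^n \;=\; c\, i^{n^2}\, \Omega \wedge \overline{\Omega},
\end{equation*}
with $c>0$ fixed by matching total masses, for some bounded $\omega$-plurisubharmonic function $u$. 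On $X_0$, a smooth solution $u$ then produces $\omega_{CY} = \omega + i\partial\overline{\partial}u$ with $\mathrm{Ric}(\omega_{CY})=0$ by the usual calculation.

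Next I would resolve singularities $\pi:\tilde X \to X$, so that $\pi^*\omega$ is smooth and semi-positive on $\tilde X$, Kähler away from the exceptional divisors. Under canonical singularities, the discrepancies in $K_{\tilde X} = \pi^* K_X + \sum a_i E_i$ satisfy $a_i \geq 0$, so the pulled-back measure $\pi^*(i^{n^2}\Omega \wedge \overline{\Omega})$ still has $L^p$ density with respect to a smooth reference volume form on $\tilde X$. Approximating by smooth Kähler forms $\omega_\varepsilon := \pi^*\omega + \varepsilon\, \omega_{\tilde X}$ and invoking Yau's theorem, I obtain smooth solutions $u_\varepsilon$ of
\begin{equation*}
(\omega_\varepsilon + i\partial\overline{\partial} u_\varepsilon)^n \;=\; c_\varepsilon\, f\, dV_{\tilde X}.
\end{equation*}
The crucial step is then to derive a uniform $L^\infty$ bound on $u_\varepsilon$ by Kolodziej's capacity methods adapted to the degenerating reference class, which allows passage to the limit $\varepsilon \to 0$ to produce a bounded $\pi^*\omega$-psh solution on $\tilde X$, and hence, by push-forward, a bounded $\omega$-psh solution $u$ on $X$ satisfying the equation in the pluripotential sense.

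Smoothness and Ricci-flatness on $X_0$ would then follow from standard elliptic regularity: on any relatively compact subset of $X_0$ the reference form is strictly Kähler and $f$ is smooth, so Yau's second-order a priori estimate together with Evans-Krylov and Schauder bootstrapping give $u \in C^\infty$ there, whence $\omega + i\partial\overline{\partial}u$ is a smooth Kähler metric of vanishing Ricci curvature. Uniqueness follows from the comparison principle of pluripotential theory applied to the difference of two bounded solutions, via a Calabi-type integration-by-parts after a suitable truncation to avoid the singular locus. The main obstacle and heart of the argument is the uniform $L^\infty$ estimate on $u_\varepsilon$ in the presence of the degenerating reference class and a density that is only in $L^p$; this is precisely where the Eyssidieux-Guedj-Zeriahi extension of Kolodziej's capacity estimates is indispensable.
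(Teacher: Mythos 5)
The paper does not prove this statement: it is quoted verbatim as an external result with a citation to Eyssidieux--Guedj--Zeriahi, so there is no internal argument to compare against. Your outline is a faithful and essentially correct reconstruction of the actual EGZ proof: reduction to the degenerate Monge--Amp\`ere equation $(\omega+i\partial\overline{\partial}u)^n = c\, i^{n^2}\Omega\wedge\overline{\Omega}$ with $L^p$ density (the canonical-singularity hypothesis is exactly what guarantees $p>1$ after resolving), approximation by Yau solutions for $\omega_\varepsilon = \pi^*\omega + \varepsilon\,\omega_{\tilde X}$, the uniform $L^\infty$ estimate by capacity methods as the heart of the matter, and uniqueness by the pluripotential comparison principle. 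The one place your sketch is thinner than the real argument is the smoothness of the limit on $X_0$: Yau's second-order estimate is a global maximum-principle argument on the compact resolution, and the maximum of the relevant test quantity may migrate to the exceptional locus as $\varepsilon\to 0$, so one needs either a barrier of the form $-A\log|s_E|^2$ (Tsuji's trick) to obtain interior $C^2$ bounds uniform in $\varepsilon$ on compact subsets of the ample locus, or a purely local second-order estimate for bounded solutions; once that is in place, Evans--Krylov and Schauder proceed as you describe. This is a standard repair, not a flaw in the strategy.
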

\begin{rem}
    The above result establishes the existence and uniqueness of singular Ricci-flat Kähler metrics on Calabi-Yau varieties but it does not specify the behavior of such metrics near the singularities.
\end{rem}
We also give the following definition needed for later 
\begin{defi} \label{polsmo}
Let $(X, L)$ be a polarized projective Calabi-Yau variety. A \textit{\textbf{polarized smoothing}} for $(X, L)$ is a flat polarized family $\pi:(\mathcal{X}, \mathcal{L}) \rightarrow \mathbb{D}$ over the unit disk $\mathbb{D} \subset \C$ such that $(X, L)$ is isomorphic to $\pi^{-1}(\{0\})=\left(X_0, L_0\right), \pi^{-1}(\{t\})=\left(X_t, L_t\right)$ is smooth for all $t \neq 0$, and the relative canonical bundle $K_{\mathcal{X} / \mathbb{D}}$ is trivial. We say that such a pair $(X,L)$ is smoothable.
\end{defi}
\begin{rem}
    If $(\mathcal{X}, \mathcal{L})$ is a polarized smoothing, then there is an embedding
$
  \mathcal{X} \;\hookrightarrow\; \mathbb{CP}^N \times \mathbb{D}
$ such that
$
  \mathcal{L}^m \;=\; \mathcal{O}_{\mathbb{D}}(1)\bigl|_{\mathcal{X}}
  \hspace{0.1cm}\text{for some} \hspace{0.2cm} m \ge 1
$, $\pi$ is a proper surjection given by the restriction on $\mathcal{X}$ of the projection of
$\mathbb{CP}^N \times \mathbb{D}$ onto the second factor and $\pi_*$ is of rank $1$ on $\mathcal{X} \setminus \{ x\}$. This, in particular, implies that $X_{t}$, for $t \in \mathbb{D} \setminus \{0\}$, are all diffeomorphic (but not necessarily bi-holomorphic).
\end{rem}

In this work, we are interested in Calabi-Yau manifolds with conical and asymptotically conical Ricci-flat Kähler metrics. Examples of such manifolds have been constructed in many instances. we will mention a couple of these that are of interest. We start with the case of Calabi-Yau cones. The book of Boyer and Galicki \cite{boyer2007sasakian} is a comprehensive reference.
\begin{defi}
 A \textbf{Kähler cone } is a Riemannian cone $\left(C, g_C\right)$ such that $g_C$ is Kähler with respect to some complex structure $J_C$. We then have a Kähler form $\omega_C(X, Y)=g_C\left(J_C X, Y\right)$, and $\omega_C=\frac{i}{2} \partial \bar{\partial} r^2$ with respect to $J_C$ for a distance function $r$.
 \end{defi}
We also give the following definition 

\begin{defi}
    The \textbf{Reeb vector field} of a Kähler cone is the holomorphic Killing field tangent to the link $\xi=J_C\left(r \partial_r\right)$. The closure of the 1-parameter subgroup of $\operatorname{Isom}\left(C, g_0\right)$ generated by $\xi$ is a compact torus $\mathbb{T}$ of holomorphic isometries called the Reeb torus of the cone. We say that the cone is \textbf{regular} if $\mathbb{T}=S^1$ acting freely, quasi-regular if $\mathbb{T}=S^1$ not acting freely and irregular if $\operatorname{dim} \mathbb{T}>1$.
\end{defi}
The following result can be found in \cite[Theorem 3.1]{van2011examples} and \cite[Lemma 2.20]{DonaldsonSun}
\begin{tm} \label{convar}
 If $\left(C, g_0, J_0\right)$ is a Kähler cone, then the complex manifold $\left(C, J_0\right)$ is isomorphic to the regular part of a normal algebraic variety $V \subset \mathbb{C}^N$ with one singular point. In addition, the $\R_{>0}$ scaling action on $C$ generated by $r \partial_r$ can be extended to a diagonal $\R_{>0}$-action on $\C^N$ given by $\left(t, z_1, \ldots, z_N\right) \mapsto\left(t^{w_1} z_1, \ldots, t^{w_N} z_N\right)$ such that all $w_i >0$.
\end{tm}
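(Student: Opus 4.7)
The plan is to construct a holomorphic $\R_{>0}\times\mathbb{T}$-equivariant embedding of $C$ into $\C^N$ for some $N$, with image an affine variety on which the scaling acts diagonally with strictly positive weights.

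First I would set up the holomorphic action and describe its eigendecomposition on holomorphic functions. Since $\omega_C$ is Kähler, the scaling $r\partial_r$ satisfies $\mathcal{L}_{r\partial_r}J_0 = 0$ and is therefore real holomorphic; likewise the Reeb field $\xi = J_0(r\partial_r)$ is a holomorphic Killing field generating $\mathbb{T}$, so $\R_{>0}\times\mathbb{T}$ acts on $(C,J_0)$ by biholomorphisms. For any holomorphic function $f$ on $C$ a direct computation using $\xi = J_0(r\partial_r)$ gives $\xi(f) = i\,r\partial_r(f)$. Decomposing holomorphic functions of polynomial growth into $\mathbb{T}$-isotypic components indexed by characters $\chi$, the $\chi$-isotypic component consists of $r\partial_r$-eigenfunctions with eigenvalue $w(\chi) := \langle\chi,\xi\rangle\in\R$. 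Any non-constant holomorphic $f$ that extends continuously across the vertex $o$ must have $w(\chi)>0$ on its isotypic components, since scaling-invariant holomorphic functions are constant and negative weight would force blow-up at $o$.

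Second, I would produce enough eigenfunctions to embed $C$. The link $L = \{r=1\}$ is a compact strictly pseudoconvex CR manifold, and $\xi$-eigenfunctions among its CR-holomorphic functions correspond bijectively to $r\partial_r$-homogeneous holomorphic functions on $C$. Appealing to Boutet de Monvel's CR embedding theorem (or, in the quasi-regular case, to the Kodaira embedding theorem for the Kähler orbifold $L/S^1$), one selects finitely many such eigenfunctions $f_1,\ldots,f_N$ with positive weights $w_1,\ldots,w_N$ whose differentials separate points and tangent vectors of $C$. The resulting map
\[
F : C \longrightarrow \C^N, \qquad F(x) = (f_1(x),\ldots,f_N(x)),
\]
is then a $\R_{>0}\times\mathbb{T}$-equivariant holomorphic embedding intertwining the scaling on $C$ with the diagonal action $t\cdot(z_1,\ldots,z_N) = (t^{w_1}z_1,\ldots,t^{w_N}z_N)$ on $\C^N$, which is exactly the desired extension of the scaling.

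Finally, I would verify that $V := \overline{F(C)}\subset\C^N$ is a normal affine algebraic variety with a single isolated singularity at the origin. The positive weights and $\R_{>0}$-invariance force the defining analytic ideal of $V$ near $0$ to be generated by weighted-homogeneous holomorphic functions; upgrading these to weighted-homogeneous polynomials (via the $\C^*$-completion of the action in the quasi-regular case, or via Van Coevering's approximation-by-quasi-regular argument in the irregular case, combined with a GAGA-style statement on the weighted projective quotient) shows that $V$ is algebraic. Normality then follows from smoothness of $V\setminus\{0\}\cong C$ together with Hartogs-type extension across an isolated singularity in complex dimension $\geq 2$. The main obstacle is the embedding step in the irregular case, where $\xi$ has non-closed orbits and one cannot directly pass to a Kähler orbifold; Boutet de Monvel's theorem resolves this when $\dim_\R L\geq 5$, with a separate argument (Lempert's embedding or a direct Harvey--Lawson construction) required in CR-dimension one.
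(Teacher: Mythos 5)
The paper offers no proof of this statement: Theorem \ref{convar} is quoted directly from van Coevering and Donaldson--Sun, so the benchmark is the argument in those references. Your outline follows it in its essentials: holomorphic functions on $C$ of polynomial growth decompose under the Reeb torus into $r\partial_r$-homogeneous pieces, the identity $\xi(f)=i\,r\partial_r(f)$ forces the weights to be real and (for non-constants bounded near the vertex) strictly positive, a finite collection of homogeneous CR eigenfunctions embeds the link (Boutet de Monvel, or orbifold Kodaira in the quasi-regular case), homogeneity and positivity of the weights upgrade this to a proper $\R_{>0}\times\mathbb{T}$-equivariant embedding of $C$ intertwining the scaling with a diagonal action, and torus-invariance of the ideal of the image gives algebraicity because the weight spaces of $\C[z_1,\dots,z_N]$ are spanned by finitely many monomials. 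All of that is sound.

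The genuine gap is the normality step. ``Smooth away from the origin in dimension $\geq 2$ plus Hartogs'' does not imply that $V=\overline{F(C)}$ is normal at $0$: Hartogs extension applies to functions defined on a punctured \emph{ambient} neighbourhood, whereas a weakly holomorphic germ at the vertex is defined only on $V\setminus\{0\}$, and whether it extends across $0$ is exactly the content of normality (equivalently, depth $\geq 2$ at the vertex), which is not automatic. Concretely, the affine cone over a smooth projective variety embedded by an incomplete linear system --- e.g.\ the rational quartic $[s:t]\mapsto[s^4:s^3t:st^3:t^4]$ --- is smooth away from its vertex but not normal there; your construction as stated could produce exactly such a $V$, since choosing eigenfunctions that merely separate points and tangent vectors does not guarantee they generate the full coordinate ring. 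The fix, as in the cited references, is either to take $f_1,\dots,f_N$ to generate the whole graded ring $R=\bigoplus_\mu R_\mu$ of polynomial-growth holomorphic functions on $C$ --- which is integrally closed because $C$ is smooth and connected, so $V=\operatorname{Spec} R$ is normal, the real work being the finite generation of $R$ --- or to invoke the Rossi/Harvey--Lawson normal Stein filling of the strictly pseudoconvex link, which produces $C\cup\{o\}$ as a normal complex space from the outset.
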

Due to this result, sometimes we will not differ between the cone $C$ and the variety $V$. This $\R_{>0}$-equivariant realization of the cone as an affine algebraic variety will be important in our work as it will be the main ingredient to construct appropriate weighted Melrose-type blow-ups.  
\begin{defi} \label{cycone}
    We say that $\left(C, g_C, J_C, \Omega_C\right)$ is a \textbf{Calabi-Yau cone} if
        \begin{enumerate}
            \item $\left(C, g_C, J_C\right)$ is a Ricci-flat Kähler cone of complex dimension $n$,
            \item the canonical bundle $K_C$ of $C$ with respect to $J_C$ is trivial, and
            \item $\Omega_C$ is a nowhere vanishing section of $K_C$ with $\omega_C^n=i^{n^2} \Omega_C \wedge \bar{\Omega}_C$.
        \end{enumerate}
        For simplicity, sometimes we will specify a Calabi-Yau cone only by $(C, \omega_C)$. A Calabi-Yau cone is called \emph{quasi-regular} (resp. regular or irregular) if it is quasi-regular (resp. regular or irregular) as a Kähler cone. 
\end{defi}
\begin{ex}
\hfill
    \begin{itemize}
        \item A trivial example is given by $\C^n$ with the Euclidean metric as a cone over $S^{2n-1}$.
        \item More generally, if $G$ is a finite subgroup of $SU(n)$ acting freely on $\C^n \setminus \{ 0 \}$, then the quotient $\C^n / G$ with the induced quotient metric is a Calabi-Yau cone. 
        \item The quadric $C := \left\{ z\in \C^{n+1}  : \sum_{i=1}^n z_i^2 = 0\right\}$ is a Calabi-Yau cone. In \cite{stenzel_ricci-flat_1993}, Stenzel constructed a Ricci-flat Kähler cone metric given by
        $$\omega_{C} = i \partial \overline{\partial} \left( \| z\|^2 \right)^{\frac{n-1}{n}}.$$
        We refer to a singularity which is isomorphic to the quadric $C$ above as a \emph{nodal singulairty}.
        \item The Calabi ansatz \cite{calabi_metriques_1979} is a general construction for regular Calabi-Yau cones. If $D$ is a Kähler-Einstein Fano manifold, then, for every integer $k>0$ dividing $c_1(D)$, there exist a Ricci-flat Kähler cone metric on $(\frac{1}{k} K_D)^{\times}$, the blowdown of the zero section of $\frac{1}{k} K_D$.
    \end{itemize}
\end{ex}

\subsubsection*{Conical Calabi-Yau manifolds}
We work with the following definition of a conical Calabi-Yau manifold.
\begin{defi} \label{def : conical CY}
    A \textbf{conical Calabi-Yau manifold} is a pair $(X_0, \omega_{CY})$ such that $X_0$ is a compact normal complex analytic variety with finitely many isolated singularities which is smooth Calabi-Yau on its regular part with an associated smooth Calabi-Yau metric $\omega_{CY}$ and such that for each $p \in X_0^{sing}$, there exists a biholomorphism $P : V \rightarrow U$ from a neighborhood $V$ of the vertex in a Calabi-Yau cone $(C_p, \omega_{C_p})$ to a neighborhood $U$ of $p$ in $X_0$ such that 
    $$
    P^* \omega_{CY} - \omega_{C_p} = i \partial \overline{\partial}u,
    $$
    where $u \in r^{2+\lambda} C_b^{\infty} (C_p)$, $\lambda >0$ and $r$ is the radial function of $\omega_{C_p}$.
\end{defi}
Notice that, here, we are supposing that the metric is $i \partial \overline{\partial}$-exact near the singularities. The simplest examples of conical Calabi-Yau manifolds are those with orbifold singularities. We refer to the book of Joyce \cite{joyce2000compact} for a reference. 

\begin{defi}
    A \textbf{complex orbifold } is a singular complex-analytic variety of dimension $n$ whose singularities are all locally isomorphic to quotient singularities $\C^n / G$, for finite subgroups $G \subset GL(n, \C)$. We say that $g$ is a \textbf{Kähler metric} on a a complex orbifold $X$ as above if $g$ is a Kähler metric on the non-singular part of $X$ and, whenever $X$ is locally isomorphic to $\C^n / G$, we can identify $g$ with the quotient of a $G$-invariant Kähler metric defined near $0$ in $\C^n$. A \textbf{Kähler orbifold} $(X, J, g)$ is a complex orbifold $(X,J)$ equipped with a Kähler metric $g$.
\end{defi}

It is a well-known fact that compact complex orbifolds with vanishing first Chern class admit Ricci-flat Kähler metrics in the orbifold sense \cite[Theorem 6.5.6]{joyce2000compact}.

\begin{tm} \label{conorbifold}
    Let $(X, J)$ be a compact complex orbifold with $c_1(X) =0 $ in $H^2(X, \R)$, admitting Kähler metrics. Then there is a unique Ricci-flat Kähler metric $g$ in every Kähler class on $X$. We refer to $(X, J, g)$ as a \emph{Calabi-Yau orbifold}.
\end{tm}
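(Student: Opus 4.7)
The plan is to adapt Yau's proof of the Calabi conjecture to the orbifold setting. First I would reformulate the problem as a complex Monge–Ampère equation on $X$. Fix a Kähler form $\omega$ in a given Kähler class $[\omega] \in H^{1,1}(X,\R)$. Since $c_1(X)=0$, the Ricci form $\mathrm{Ric}(\omega)$ is a closed real $(1,1)$-form with trivial cohomology class; by the orbifold $\partial\bar\partial$-lemma (which follows from the smooth one applied locally on uniformizing charts together with averaging over the local group $G$), there exists a smooth orbifold function $F \in C^\infty(X)$ with $\mathrm{Ric}(\omega) = i\partial\bar\partial F$, normalized so that $\int_X (e^F - 1)\,\omega^n = 0$. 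Then finding a Ricci-flat Kähler metric $\omega_{CY} = \omega + i\partial\bar\partial u$ in the class $[\omega]$ reduces to solving the complex Monge–Ampère equation
\begin{equation*}
(\omega + i\partial\bar\partial u)^n = e^F \omega^n, \qquad \omega + i\partial\bar\partial u > 0,
\end{equation*}
with $u$ a smooth orbifold function, normalized by $\int_X u\,\omega^n = 0$.

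Next I would run the continuity method of Yau on the family $(\omega + i\partial\bar\partial u_t)^n = e^{tF + c_t}\omega^n$ for $t \in [0,1]$, where $c_t$ is the normalizing constant. Openness follows from the implicit function theorem: the linearization is $\tfrac{1}{2}\Delta_{\omega_t}$, which is invertible on the $L^2$-orthogonal complement of constants, and this applies equally well on orbifolds since the Laplacian is still elliptic and self-adjoint on the space of orbifold functions. For closedness one needs uniform a priori estimates: $C^0$ via Moser iteration (or Kołodziej's pluripotential approach), $C^2$ via the Aubin–Yau inequality bounding $\Delta_\omega u$ in terms of $\sup|u|$ and a lower bound on the bisectional curvature of $\omega$, and $C^{2,\alpha}$ via Evans–Krylov; finally $C^{k,\alpha}$ for all $k$ by bootstrapping from the now-uniformly elliptic linearized equation. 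All of Yau's estimates pass to the orbifold setting by working on local uniformizing covers $\C^n \supset \tilde U \to \tilde U/G = U \subset X$, where the lifted equation is a standard complex Monge–Ampère equation on $\tilde U$ whose solution is automatically $G$-invariant by uniqueness of the lifted solution and Aubin's pointwise estimates hold uniformly.

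The main obstacle is bookkeeping rather than substance: one must verify that each step of Yau's argument respects the orbifold structure, i.e. yields $G$-invariant functions and bounds that are uniform across the finitely many uniformizing charts needed to cover $X$. In particular, integration by parts on $X$ uses the orbifold volume form, and the ellipticity and maximum principle apply on the smooth locus while the singular locus has real codimension at least two, so boundary contributions vanish. Uniqueness within a Kähler class follows from the standard concavity argument: if $u_1, u_2$ are two normalized solutions, subtracting the two Monge–Ampère equations and integrating $(u_1 - u_2)$ against the difference shows $i\partial\bar\partial(u_1-u_2) = 0$, hence $u_1 = u_2$ after normalization, an argument which is valid on $X^{reg}$ and extends across the codimension-$\ge 2$ singular set by the orbifold regularity of $u_1, u_2$.
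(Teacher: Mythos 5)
The paper does not actually prove this statement: it is quoted directly from Joyce \cite[Theorem 6.5.6]{joyce2000compact}, whose proof is precisely the adaptation of Yau's continuity-method argument to orbifolds via local uniformizing charts that you outline. Your sketch (reduction to the complex Monge-Ampère equation via the orbifold $\partial\overline{\partial}$-lemma, openness by the implicit function theorem, closedness by transferring the $C^0$, $C^2$ and $C^{2,\alpha}$ a priori estimates to $G$-invariant lifts on uniformizing covers, and uniqueness by the standard integration-by-parts argument) follows essentially the same route as the cited proof and is correct.
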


If $(X,J,g)$ is a Calabi-Yau orbifold and an orbifold point $x$ has orbifold group $G$ then 
$$
G \subset SU(n).
$$
Therefore, if $G$ acts freely away from $x$, then the Ricci-flat Kähler metric $g$ is conical near $x$ modeled on the Calabi-Yau cone $\C^m / G$ with its induced quotient metric. Moreover, it is clearly $i \partial \overline{\partial}$-exact near the singularities. \\

In the projective setting, in \cite{hein_calabi-yau_2017}, Hein-Sun, obtained a general existence result of conical Calabi-Yau manifolds under the assumptions that the variety admits a polarized smoothing and the cone is strongly regular. More recent works by Chiu-Székelyhidi \cite{ChiuSzekelyhidi} and Zhang \cite{zhang2024polynomial} showed that such assumptions are not necessary. We state the general final result in the following form.
\begin{tm} \label{HS}
Let $X$ be a projective Calabi-Yau variety in the sense of Theorem \ref{EGZ}, of complex dimension $n \geq 3$ with isolated singularities and let $L$ be an ample line bundle on $X$. For convenience, suppose that $X^{sing} = \{ p\}$. Suppose that the germ $(X,p)$ is analytically isomorphic to a neighborhood of the vertex $o$ in a Calabi-Yau cone $(C_p, \omega_{C_p})$. Then the unique Ricci-flat Kähler metric $\omega_{CY} \in 2 \pi c_1(L)$ on $X$ with bounded potential on the germ $(X,x)$ is conical in the sense of Definition \ref{def : conical CY} and asymptotic near $p$ to $\omega_{C_p}$.
\end{tm}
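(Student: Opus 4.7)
The plan is to start by applying Theorem \ref{EGZ} (Eyssidieux-Guedj-Zeriahi) to produce the singular Ricci-flat Kähler current $\omega_{CY} \in 2\pi c_1(L)$ with bounded local potential $\varphi$ on the germ $(X,p)$. The remaining statement is entirely local: identifying $(X,p)$ with $(C_p, o)$ via the given biholomorphism, we must show that the pullback of $\omega_{CY}$ is of the form $\omega_{C_p} + i\partial \overline{\partial} u$ for some $u \in r^{2+\lambda} C_b^\infty(C_p)$, $\lambda > 0$. Since $\omega_{C_p} = i\partial \overline{\partial}(\tfrac{1}{2}r^2)$ on the cone, I would introduce the relative potential $\psi := \varphi - \tfrac{1}{2}r^2$ and recast the problem as proving polynomial decay of $\psi$ (and its cone-derivatives) at the vertex.

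First, combining the EGZ bounded potential with Kołodziej-type pluripotential bounds and the maximum principle applied on small balls, one gets an $L^\infty$ bound for $\psi$ on a punctured neighborhood of $o$. Working on the cone $C_p \setminus \{o\}$, one has the Monge-Ampère equation $(\omega_{C_p} + i\partial \overline{\partial}\psi)^n = \omega_{C_p}^n$ (both sides are Ricci-flat reference volume forms, with any defect encoded in a smooth multiplier whose $\log$ is bounded). Combined with the homogeneity of $\omega_{C_p}$ under the Reeb/scaling action, the $L^\infty$ estimate on $\psi$ rescales to a uniform $L^\infty$ estimate on rescaled potentials on fixed annuli.

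The heart of the argument, and the main obstacle, is the identification of the metric tangent cone of $(X, \omega_{CY})$ at $p$ with $(C_p, \omega_{C_p})$ itself. In the original work of Hein-Sun this was achieved under a strong regularity assumption using Donaldson-Sun's algebraic theory of tangent cones together with a comparison of affine structures; in the improvements of Chiu-Székelyhidi and Zhang \cite{ChiuSzekelyhidi, zhang2024polynomial} this is replaced by direct PDE three-annulus arguments that remove the regularity assumption. This is the deepest input; granting it, the rescaled potentials $\psi_\lambda(z) := \lambda^{-2}\psi(\lambda \cdot z)$ converge to a harmonic (in fact pluriharmonic) limit on compact subsets of $C_p \setminus \{o\}$, which must vanish by the normalization, yielding $\psi(z) = o(r^2)$.

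Finally, I would promote this qualitative decay to the polynomial rate $r^{2+\lambda}$ required by Definition \ref{def : conical CY}. Applying a Donaldson-Sun "three-circle" or polynomial monotonicity lemma to $\psi$, viewed as a perturbation of a harmonic function on the cone, forces $\|\psi\|_{L^\infty(\{r \sim \lambda\})} \leq C \lambda^{2+\lambda_0}$ for some $\lambda_0 > 0$ determined by the first indicial root of $\Delta_{C_p}$ strictly greater than $2$, i.e.\ by the spectrum of the link Laplacian; this step is where Assumption S.2-type information on the cone spectrum enters. Once this polynomial decay holds, the Monge-Ampère equation is uniformly elliptic on each scale $r \sim \lambda$ with coefficients that are smooth up to the boundary of the cone, so interior Schauder/$C^{k,\alpha}$-estimates applied in rescaled form yield analogous decay of all b-derivatives, producing $u \in r^{2+\lambda} C_b^\infty(C_p)$ as required and completing the proof.
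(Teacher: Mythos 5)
The paper does not actually prove Theorem \ref{HS}: it is quoted verbatim as an external input, namely the main theorem of Hein--Sun \cite{hein_calabi-yau_2017} together with the removals of hypotheses by Chiu--Sz\'ekelyhidi \cite{ChiuSzekelyhidi} and Zhang \cite{zhang2024polynomial}. So there is no internal proof to compare against; what you have written is a sketch of the strategy of those cited works. As such an outline it is broadly faithful (EGZ existence, reduction to the relative potential $\psi=\varphi-\tfrac12 r^2$ on the cone, $L^\infty$ bounds, tangent-cone identification, three-annulus monotonicity, and elliptic bootstrapping to $r^{2+\lambda}C_b^\infty$ decay), but it is not a proof: the tangent-cone identification and the polynomial convergence rate are precisely the content of the cited papers, and naming them does not supply the argument.

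Two concrete inaccuracies in the sketch are worth flagging. First, your claim that the blow-up limit of the rescaled potentials is pluriharmonic and ``must vanish by the normalization'' is false as stated: on a Calabi--Yau cone the space of harmonic functions of growth rate exactly $2$ is in general nontrivial (it corresponds to holomorphic vector fields/automorphisms of the cone, the space $\mathcal{H}_{[0,\nu)}$ appearing in Proposition \ref{conlap} of this paper), and Hein--Sun must absorb these terms by composing the local biholomorphism $P$ with an automorphism of $C_p$ before any decay rate $\lambda>0$ can be extracted. Omitting this gauge-fixing step leaves a genuine gap. Second, the decay exponent $\lambda$ is governed by the spectrum of the Laplacian on the link of $C_p$ (equivalently the indicial roots above $2$); it has nothing to do with Assumption \hyperlink{S.2}{S.2} of this paper, which concerns the existence of asymptotically conical Calabi--Yau metrics on affine smoothings and plays no role in Theorem \ref{HS}.
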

The following is an important example where this result applies  
\begin{ex} \label{ODP}
    Consider a hypersurface $X \subset \mathbb{CP}^{n+1}$ of degree $n+2$ and suppose that $X$ has isolated nodal singularities. A nodal singularity is locally analytically isomorphic to the quadric
\[
C = \left\{ z \in \C^{n+1}, \hspace{0.1cm} Q(z)= z_1^2 + z_2^2 + \cdots + z_{n+1}^2 = 0 \right\}.
\]
As seen before, $C$ is a Calabi-Yau cone when equipped with the Stenzel metric \cite{stenzel_ricci-flat_1993}
$$
\omega_{C} = i \partial \overline{\partial} \left( \| z\|^2 \right)^{\frac{n-1}{n}}.
$$
Therefore, if we let $L : = \restr{\mathcal{O}(1)}{X}$, then $(X,L)$ is smoothable and Theorem \ref{HS} implies that it admits a unique singular Calabi-Yau metric $\omega_X \in 2 \pi c_1(L)$ which is asymptotic to the Stenzel metric $\omega_C$.
\end{ex}

\subsubsection*{Asymptotically conical Calabi-Yau manifolds}
Now, we look at asymptotically conical Calabi-Yau manifolds by which we mean Calabi-Yau manifolds with Ricci-flat Kähler metrics which are asymptotically conical in the sense of Definition \ref{def : conifolds}. There have been several works constructing examples of asymptotically Calabi-Yau manifolds \cite{goto_calabi-yau_2012,van_coevering_ricci-flat_2010, van2011examples, conlon2013asymptotically, conlon2015asymptotically, Tian_Yau1991}, but recently, in \cite{conlon_classification_2024}, the authors gave a complete classification of these.\\

Let $C$ be a Kähler cone. Following Theorem \ref{convar}, we will see $C$ as a normal affine variety. Let $\xi = J(r \partial_r)$ be the associated Reeb vector field and $\mathbb{T}$ be the Reeb torus. Before stating the main result of \cite{conlon_classification_2024}, we need the following technical definition following \cite[Definition 1.7]{conlon_classification_2024}. 

\begin{defi} \label{deformation of negative weight}
 An affine variety $V$ is a \textbf{deformation of negative $\xi$-weight} of $C$ if and only if there exists a sequence $\xi_i$ of elements of $\operatorname{Lie}(\mathbb{T})$ and a sequence $c_i$ of positive real numbers such that
 \begin{enumerate}
\item  $\xi_i \rightarrow \xi$ as $i \rightarrow \infty$;
\item  the vector field $-J\left(c_i \xi_i\right)$ generates an effective algebraic $\mathbb{C}^*$-action on $C$;
\item  there exists a $\mathbb{C}^*$-equivariant deformation of $V$ to $C$, i.e., a triple $\left(W_i, p_i, \sigma_i\right)$, where
\begin{itemize}
\item  $W_i$ is an irreducible affine variety,
\item $p_i: W_i \rightarrow \mathbb{C}$ is a regular function with $p_i^{-1}(0) \cong C$ and $p_i^{-1}(t) \cong V$ for $t \neq 0$, and
\item $\sigma_i: \mathbb{C}^* \times W_i \rightarrow W_i$ is an effective algebraic $\mathbb{C}^*$-action on $W_i$ such that
\item $p_i\left(\sigma_i(\lambda, x)\right)=\lambda^{\mu_i} p_i(x)$ for some $\mu_i \in \mathbb{N}$ and all $\lambda \in \mathbb{C}^*, x \in W_i$, and
\item $\sigma_i$ restricts to the $\mathbb{C}^*$-action on $p_i^{-1}(0) \cong C$ generated by $-J\left(c_i \xi_i\right)$;
\end{itemize}
\item  $\lim _{\lambda \rightarrow 0} \sigma_i(\lambda, x)=o$ for every $x \in W_i$, where $o \in C$ is the apex of the cone and 
\item the sequence $\nu_i:=-\left(k_i \mu_i\right) / c_i$ is uniformly bounded away from zero, where $k_i \in \mathbb{N} \cup\{\infty\}$ is the vanishing order of the deformation $\left(W_i, p_i\right)$, i.e., the supremum of all $k \in \mathbb{N}$ such that $\left(W_i, p_i\right)$ becomes isomorphic to the trivial deformation of $C$ after base change to Spec $\mathbb{C}[t] /\left(t^k\right)$.
 \end{enumerate}
If $V$ satisfies this condition, then we define the $\xi$-weight of $V$ to be the infimum over all possible sequences $\xi_i$ and $\left(W_i, p_i, \sigma_i\right)$ as above of $\limsup _{i \rightarrow \infty} \nu_i$. This is a negative real number $\nu$.
\end{defi}

\begin{rem} \label{remark negative weight}
\hfill
\begin{itemize}
    \item If the cone $C$ is quasi-regular i.e. $\mathbb{T} = S^1$, then in the definition of a deformation with negative $\xi$-weight above, the sequence $(W_i, p_i, \sigma_i)$ collapses to a single element $(W, p, \sigma)$. Following \cite[Theorem 2.2]{conlon_classification_2024}, if $(W,p, \sigma)$ is as above, then there exist $\mu_1, \mu_2, \cdots, \mu_N \in \N$ coprime and an embedding $\phi : W \rightarrow \C_t \times \C^N$ such that 
    \begin{itemize}
        \item $p = \pi_t \circ \phi$, where $\pi_t : \C_t \times \C^N \rightarrow \C_t $ is the projection into the first coordinate.
        \item $\phi (\sigma(\lambda, x)) = \mathrm{diag}(\lambda^\mu, \lambda^{\mu_1}, \cdots, \lambda^{\mu_N}) \cdot \phi(x)$ for all $\lambda \in \C^{*}$ and $x \in W$.
    \end{itemize}
    \item When $C$ is quasi-regular and a complete intersection of quasi-homogeneous hypersurfaces, the authors gave a characterization \cite[Lemma 4.1]{conlon_classification_2024} which simplifies the condition of a deformation of negative $\xi$-weight. In such a case, if $V$ is a deformation of $C$ with negative $\xi$-weight then it is isomorphic as an affine variety to a connected component of some fiber of the semi-universal Artin-Elkik \cite{artin1976lectures, Elkik} deformation $\mathcal{W} \rightarrow S$ of $C$ which can be made to be $\C^{*}$-equivariant thanks to the work of Slodowy \cite{MR584445}. See \cite[Section 4.1]{conlon_classification_2024} for more details.
    \end{itemize}
\end{rem}
With this definition, we state the main result classifying asymptotically conical Calabi-Yau manifolds \cite[Theorem A, Theorem B]{conlon_classification_2024}.
\begin{tm}[Conlon-Hein] \label{ConHein}
We fix a Calabi-Yau cone $(C, \omega_C)$. Suppose $V$ is an affine variety which is a deformation of $C$ with negative $\xi$-weight and let $\pi : M \rightarrow V$ be a holomorphic crepant resolution such that $M$ is Kähler. Then, for any class $\mathrm{t} \in H^2(M, \R)$ such that $\langle\mathrm{t}^d , Z \rangle >0$ for all irreducible subvarieties $Z$ of $\mathrm{Exc} (\pi)$, $d = \dim Z >0$, and for all $g \in \mathrm{Aut}^{T}(C)$, a transverse automorphism of the cone $C$, $M$ admits an asymptotically conical Ricci-flat Kähler metric which is asymptotic to $g^{*}\omega_C$. In addition, these classify all asymptotically conical Calabi-Yau manifolds up to diffeomorphism.
\end{tm}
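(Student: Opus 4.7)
The plan is to prove the two halves of the theorem separately: first existence of an AC Calabi-Yau metric in the given class, then the classification (that every AC Calabi-Yau arises this way). Throughout I will rely on the analytic toolkit developed in the preceding sections, in particular the mapping properties of the asymptotically conical Laplacian (Proposition \ref{aclap}) and the structural results on Calabi-Yau cones (Theorem \ref{convar}).

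For existence, I would first build a smooth Kähler background form $\omega_0$ on $M$ whose cohomology class is $\mathrm{t}$ (suitably normalized) and whose restriction to the end $M \setminus K$ is asymptotic to $g^{*} \omega_C$. To do this, I would use the hypothesis that $V$ is a deformation of $C$ of negative $\xi$-weight, together with Remark \ref{remark negative weight}, to obtain a $\C^{*}$-equivariant embedding of $V$ in $\C_t \times \C^N$ whose $t=0$ slice is $C$; pulling back the cone potential $r^2$ of $g^{*}\omega_C$ via the resolution $\pi : M \to V$ and cutting off against a Kähler form representing $\mathrm{t}$ on a compact region produces such an $\omega_0$, and the negative $\xi$-weight hypothesis guarantees that the error $\omega_0 - \pi^{*} g^{*} \omega_C$ decays at a definite polynomial rate with respect to the cone radius. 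The positivity condition $\langle \mathrm{t}^{d}, Z\rangle > 0$ on exceptional subvarieties, combined with the Nakai–Moishezon criterion applied on the compact core, ensures $\omega_0$ can be taken Kähler.

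Next I would solve the complex Monge–Ampère equation $(\omega_0 + i \partial \overline{\partial} u)^n = e^{f} \omega_0^n$ where $f$ is chosen so that the Ricci form of $\omega_0$ equals $i \partial \overline{\partial} f$ and $f$ decays at infinity; here the triviality of $K_M$ coming from crepancy and the fact that $g^{*}\omega_C$ is Calabi–Yau ensures $f$ decays at a rate matching the error. The continuity method is carried out in weighted Hölder spaces $x^{\beta} C_b^{k,\alpha}(M)$ adapted to the asymptotically conical geometry, using Proposition \ref{aclap} to invert the linearization at each step, and the openness/closedness argument reduces to a priori $C^0$ and $C^2$ estimates. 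The $C^0$ estimate follows by a weighted barrier argument (Tian–Yau-type), and the $C^2$ estimate follows from Yau's classical inequality together with the AC geometry; the resulting $u \in x^{\beta} C_b^{\infty}(M)$ gives an asymptotically conical Ricci-flat Kähler metric asymptotic to $g^{*} \omega_C$.

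For the classification direction, I would start from an arbitrary AC Calabi-Yau $(M', \omega')$ asymptotic to some Calabi–Yau cone, and first identify this asymptotic cone with $(C, g^{*} \omega_C)$ for some transverse automorphism $g$: this uses the uniqueness of the tangent cone at infinity and a Donaldson–Sun style algebraicity argument, which produces an affine algebraic structure on the end and hence realizes $M'$ as a partial crepant resolution of some affine variety $V'$. Showing that $V'$ is a deformation of $C$ with negative $\xi$-weight is the main obstacle: one has to run the Harder–Narasimhan/valuative filtration with respect to the Reeb field to produce the $\C^{*}$-equivariant degeneration $(W_i, p_i, \sigma_i)$ of Definition \ref{deformation of negative weight}, and then identify the negative weight from the rate at which $\omega'$ approaches $g^{*}\omega_C$ (this step is where the negative $\xi$-weight is not a cosmetic assumption but an actual geometric constraint imposed by the existence of an AC Calabi-Yau filling). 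Finally, uniqueness of the AC Calabi–Yau metric in each admissible class, proven by a standard $\partial\overline{\partial}$-lemma plus maximum principle argument on AC manifolds, closes the classification up to diffeomorphism.
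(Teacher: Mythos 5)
This statement is not proved in the paper at all: it is quoted verbatim as an external result, namely Theorems A and B of Conlon--Hein \cite{conlon_classification_2024}, and the paper only uses it as a black box. So there is no internal proof to compare against; what you have written is an attempted reconstruction of the Conlon--Hein argument itself, which is a long research paper, and at the level of detail given your sketch has genuine gaps in both directions.

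On the existence side, the two hard points are exactly the ones you compress into single clauses. First, passing from the numerical positivity $\langle \mathrm{t}^d, Z\rangle>0$ on exceptional subvarieties to an actual Kähler representative of $\mathrm{t}$ with the prescribed asymptotics is not just ``Nakai--Moishezon on the compact core'': one needs a Demailly--P\u{a}un-type openness statement adapted to the resolution together with the fact that the restriction of $\mathrm{t}$ to the end is $i\partial\overline{\partial}$-exact with a potential of controlled decay, and the latter is where the negative $\xi$-weight and the cohomology of the link enter quantitatively. Second, the continuity method requires that the decay rate of the Ricci potential land in the range of weights where the AC Laplacian is invertible (Proposition \ref{aclap} only covers $\beta\in(0,m-2)$), and obtaining the sharp rate of convergence to $g^{*}\omega_C$ needs a separate bootstrap; you assert the rates match but do not verify it. On the classification side, the step you label ``the main obstacle'' is indeed the heart of the matter, and your proposal does not actually supply a mechanism: producing the $\C^{*}$-equivariant degeneration $(W_i,p_i,\sigma_i)$ of Definition \ref{deformation of negative weight} from an abstract AC Calabi--Yau manifold requires first giving the end an affine algebraic structure compatible with the cone (a Donaldson--Sun-type two-step argument, nontrivial even though the metric tangent cone at infinity is unique by definition of AC), then building the degeneration from the filtration of the coordinate ring by growth rates and relating the resulting weight to the decay rate of $\omega'-g^{*}\omega_C$. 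Finally, you conflate uniqueness of the metric in a fixed class (which the maximum-principle argument addresses) with the classification of the underlying manifolds up to diffeomorphism, which is a separate assertion. For the purposes of this paper the correct move is simply to cite \cite[Theorems A and B]{conlon_classification_2024} rather than to reprove them.
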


\begin{rem} \label{remark AC CY}
Theorem \ref{ConHein} above includes, in particular, the earlier results by Van Coevering and Goto \cite{van_coevering_ricci-flat_2010, goto_calabi-yau_2012}, which prove the existence of asymptotically conical Ricci-flat Kähler metrics on crepant resolutions of Calabi-Yau cones. It also includes the general case of appropriate affine smoothings of a Calabi-Yau cone, hence, generalizing the example by Stenzel \cite{stenzel_ricci-flat_1993} which gives an asymptotically conical Ricci-flat Kähler metric on the cotangent bundle of the sphere $T^{*}S^n$ which is isomorphic to a smoothing of the node singularity i.e. 
$$
T^{*}S^n \cong \{ z \in \C^{n+1};  \hspace{0.1cm} \sum_{i=1}^{n+1} z_i^2 =1 \}.
$$ 
\end{rem}

\section{Polyhomgeneity of conical Calabi-Yau manifolds} \label{conphg}
In \cite[Theorem 6.3]{conlon_moduli_2015}, the authors prove the following polyhomogeneity result.
\begin{tm} \label{acphg}
Suppose that $\widetilde{M} \backslash \partial \widetilde{M}$ is a complex manifold and that the complex structure $J$ extends to an element $J \in \mathcal{A}_{\mathrm{phg}}^Q\left(\widetilde{M} ; \operatorname{End}\left({ }^{\text {sc }} T \widetilde{M}\right)\right)$ for some $Q \geq 0$. Suppose $g_{\mathrm{sc}}$ is a polyhomogeneous scattering metric which is Kähler with respect to $J$, and has Kähler form $\omega_{\mathrm{sc}}$. Let $F$ be a positive index set. If $f \in \mathcal{A}_{\mathrm{phg}}^F(\widetilde{M})$ and for some $\epsilon>0, u \in \rho^{\epsilon-2} C_b^{\infty}(\widetilde{M})$ satisfies
$$
\frac{\left(\omega_{\mathrm{sc}}+i \partial \bar{\partial} u\right)^n}{\omega_{\mathrm{sc}}^n}=e^f,
$$
then $u \in \mathcal{A}_{\mathrm{phg}}^{G-2}\left(\widetilde{M}\right)$ for some $G>0$.
\end{tm}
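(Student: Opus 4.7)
The plan is to convert the complex Monge-Ampère equation into a semilinear elliptic equation for $u$ whose linearization at $0$ is the scattering Laplacian of $g_{\mathrm{sc}}$, and then bootstrap polyhomogeneity by iterating the elliptic theory for asymptotically conical metrics recalled in Proposition \ref{aclap}. Expanding $(\omega_{\mathrm{sc}}+i\partial\bar\partial u)^{n}$ in powers of $i\partial\bar\partial u$ and dividing by $\omega_{\mathrm{sc}}^{n}$ yields
$$
\mathrm{tr}_{\omega_{\mathrm{sc}}}(i\partial\bar\partial u) + Q(i\partial\bar\partial u) = e^{f} - 1,
$$
where $Q$ is a polynomial of degree $n-1$ vanishing at least to order two at the origin, whose coefficients are polyhomogeneous because $\omega_{\mathrm{sc}}$ and $J$ are. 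Since $\mathrm{tr}_{\omega_{\mathrm{sc}}}(i\partial\bar\partial u) = c\,\Delta_{g_{\mathrm{sc}}} u$ for a constant $c$ and since $\mathcal{A}_{\mathrm{phg}}$ is closed under smooth composition so that $e^{f}-1 \in \mathcal{A}_{\mathrm{phg}}^{F}(\widetilde{M})$, this becomes
$$
\Delta_{g_{\mathrm{sc}}} u = c^{-1}(e^{f}-1) - c^{-1}Q(i\partial\bar\partial u).
$$

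I would next track the scattering-regularity of the nonlinear term. The scattering structure is compatible with $\Delta_{g_{\mathrm{sc}}}:\rho^{a}C_b^{\infty}\to\rho^{a+2}C_b^{\infty}$, so viewing $i\partial\bar\partial u$ as a section of $^{sc}T^{*}\widetilde{M}\otimes\hspace{0.01cm}^{sc}T^{*}\widetilde{M}$, the starting regularity $u\in\rho^{\epsilon-2}C_b^{\infty}$ gives $i\partial\bar\partial u\in\rho^{\epsilon}C_b^{\infty}$ and therefore $Q(i\partial\bar\partial u)\in\rho^{2\epsilon}C_b^{\infty}$. Consequently, the right-hand side of the displayed equation lies in $\mathcal{A}_{\mathrm{phg}}^{F}+\rho^{2\epsilon}C_b^{\infty}$. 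Applying the polyhomogeneous elliptic regularity of Proposition \ref{aclap}, in its finer asymptotic form that parallels the decomposition statement of Proposition \ref{phgconlap}, one obtains $u = v_{\mathrm{phg}}+v_{\mathrm{rem}}$, in which $v_{\mathrm{phg}}$ is polyhomogeneous with index set shifted from $F$ by the inversion of $\Delta_{g_{\mathrm{sc}}}$ (and by the indicial roots it encounters) while $v_{\mathrm{rem}}\in\rho^{2\epsilon-2-\delta}C_b^{\infty}$ for every $\delta>0$.

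The core of the proof is then an induction on the asymptotic order of the remainder. Substituting $u = v_{\mathrm{phg}}+v_{\mathrm{rem}}$ back into the equation and isolating $v_{\mathrm{rem}}$, the contribution of $v_{\mathrm{phg}}$ to $Q(i\partial\bar\partial u)$ is polyhomogeneous, while the cross and quadratic terms in $v_{\mathrm{rem}}$ have strictly better conormal decay than at the previous stage. A second application of the polyhomogeneous regularity result produces new polyhomogeneous content plus an even faster-decaying remainder. Iterating countably many times and summing the resulting formal polyhomogeneous series by a Borel-type asymptotic summation in the polyhomogeneous category yields a polyhomogeneous function $\tilde{u}$ with index set of the form $G-2$ for some $G>0$ such that $u-\tilde{u}$ vanishes to infinite order at the boundary; combined with smoothness of $u$ in the interior, this gives $u\in\mathcal{A}_{\mathrm{phg}}^{G-2}(\widetilde{M})$.

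The main obstacle I anticipate is the bookkeeping of the index sets through the iteration. Because $J$ and $g_{\mathrm{sc}}$ are only polyhomogeneous, $\partial\bar\partial$ itself carries polyhomogeneous coefficients which mix multiplicatively with those of $u$ inside $Q(i\partial\bar\partial u)$; keeping this multiplication consistent with the discreteness and positivity requirements of an index set, and verifying that the indicial roots of $\Delta_{g_{\mathrm{sc}}}$ do not conspire to obstruct asymptotic improvement at some step, is the delicate technical point and is where the full strength of the b-calculus framework is required.
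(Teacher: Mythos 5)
Your proposed argument is essentially the correct one, but note that the paper does not actually prove Theorem \ref{acphg}: it is quoted verbatim from \cite[Theorem 6.3]{conlon_moduli_2015}. The place where the paper carries out this type of argument itself is the companion conical statement, Lemma \ref{tmphg}, and there the strategy is exactly yours --- expand the Monge--Amp\`ere operator as $1+\Delta u+Q(i\partial\bar\partial u)$, observe that $Q(i\partial\bar\partial u)$ gains a factor of the decay rate because it is at least quadratic, invoke the decomposition form of polyhomogeneous elliptic regularity (Proposition \ref{phgconlap}, the analogue of what you call the ``finer asymptotic form'' of Proposition \ref{aclap}) to split $u$ into a polyhomogeneous piece plus a remainder of strictly better conormal order, and iterate. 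The one organizational difference is that the paper, instead of substituting $u=v_{\mathrm{phg}}+v_{\mathrm{rem}}$ back into the linearized equation and tracking cross terms as you do, absorbs each polyhomogeneous increment into a new (cut-off) K\"ahler form $\omega_{c,1}:=\omega_c+i\partial\bar\partial\tilde u_1$ and writes a fresh Monge--Amp\`ere equation for the remainder with a modified right-hand side $v_1=v+\log(\omega_c^n/\omega_{c,1}^n)$; this keeps every stage of the induction in exactly the same form and sidesteps the explicit cross-term bookkeeping, at the cost of having to check that the modified reference form is still a polyhomogeneous conical (resp.\ scattering) K\"ahler metric. Both routes converge on the same index-set bookkeeping issue you flag at the end, and the paper's version of that bookkeeping is deliberately coarse (it does not track the indicial roots precisely, only that the resulting index set stays positive after the shift by $2$), so your anticipated ``delicate technical point'' is handled there simply by not insisting on an explicit index set.
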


As a corollary, the authors apply this theorem to prove the polyhomogeneity at infinity of several examples of asymptotically conical Calabi-Yau manifolds such as the ones from the Tian-Yau \cite{TianYau2} construction and its generalization by Conlon-Hein \cite{conlon2015asymptotically} as well as Goto and Van Coevering's constructions \cite{goto_calabi-yau_2012, van_coevering_ricci-flat_2010} on crepant resolutions of Calabi-Yau cones. In this section, we prove Theorem \ref{tmA} previously stated in the introduction which gives a similar result for Calabi-Yau manifolds with isolated conical singularities.
\begin{lm} \label{tmphg}
Let $X$ be a complex manifold admitting a compactification by a manifold with boundary $\overline{X}$. For simplicity, we suppose there is only one boundary component with boundary defining function $r$. Suppose that the complex structure $J$ is polyhomogeneous with positive index set as a section of $End( \hspace{0.01cm} ^{c}T \overline{X})$. Let $g_c$ be a polyhomogeneous conical metric on $X$ which is Kähler with respect to $J$ with Kähler form $\omega_c$. Let $F$ be a positive index set. If $v \in \mathcal{A}_{\mathrm{phg}}^F(\overline{X})$ and for some $\lambda>0$, $u \in r^{\lambda + 2} C_b^{\infty}(X)$ satisfies
$$
\frac{\left(\omega_{\mathrm{c}}+i \partial \bar{\partial} u\right)^n}{\omega_{\mathrm{c}}^n}=e^v
$$
then $u \in \mathcal{A}_{\text {phg }}^{G}\left(\overline{X}\right)$ for some index set $G > 2$.
\end{lm}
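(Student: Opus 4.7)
My plan is to adapt the proof of the asymptotically conical analogue, Theorem \ref{acphg} from \cite{conlon_moduli_2015}, using the conical Laplacian mapping theory of Propositions \ref{conlap}--\ref{phgconlap} in place of its asymptotically conical counterpart. The first step is to linearize the complex Monge-Amp\`ere equation by taking $\log$ and expanding $\log\det(I+A)=\mathrm{tr}(A)-\tfrac{1}{2}\mathrm{tr}(A^{2})+\cdots$, where $A := g_{c}^{-1}(i\partial\bar{\partial}u)$ is the endomorphism associated to the complex Hessian of $u$. This recasts the equation in the schematic form
\[
\Delta_{c}u \;=\; v - N(u),
\]
where $N(u)$ is a nonlinear differential expression vanishing to second order in $A$. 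Since $g_{c}$ is a polyhomogeneous conical metric and $u\in r^{\lambda+2}C_{b}^{\infty}(X)$, one has $A\in r^{\lambda}C_{b}^{\infty}$, and hence $N(u)\in r^{2\lambda}C_{b}^{\infty}$.

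Next, I would apply Proposition \ref{phgconlap} to the equation $\Delta_{c}u=v-N(u)$ with $\alpha := \min(\lambda,\inf F-\varepsilon)$ for a small $\varepsilon>0$, chosen so that $u\in r^{\alpha+2}C_{b}^{\infty}$ with $\alpha<\inf F$, and $N(u)\in r^{\alpha+\beta}C_{b}^{\infty}$ with $\beta := 2\lambda-\alpha>0$. The proposition yields a decomposition $u = u^{(1)}+w^{(1)}$ with $u^{(1)}\in\mathcal{A}_{\mathrm{phg}}^{G_{1}}(\overline{X})$ whose index set satisfies $\inf G_{1}\geq 2+\alpha>2$, and with remainder $w^{(1)}\in\bigcap_{\delta>0}r^{2\lambda+2-\delta}C_{b}^{\infty}$, strictly improving the initial decay rate.

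I would then iterate. The polyhomogeneous piece $u^{(1)}$ defines a new background $\tilde{\omega}_{c} := \omega_{c}+i\partial\bar{\partial}u^{(1)}$, which remains a polyhomogeneous conical K\"ahler form near the boundary since the perturbation vanishes to positive order in $r$. Rewriting the Monge-Amp\`ere equation relative to $\tilde{\omega}_{c}$ shows that $w^{(1)}$ satisfies an equation of the same type, with better initial decay and a polyhomogeneous right-hand side (with controlled, enlarged index set). Repeating the argument produces sequences $u^{(k)}\in\mathcal{A}_{\mathrm{phg}}^{G_{k}}$ and remainders $w^{(k)}$ whose decay rates tend to infinity. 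A Borel-type summation for polyhomogeneous functions, as in \cite[Lemma 4.50]{melrose_atiyah-patodi-singer_1993}, then assembles the $u^{(k)}$ into a single $u_{\infty}\in\mathcal{A}_{\mathrm{phg}}^{G}(\overline{X})$ with $u-u_{\infty}\in\dot{C}^{\infty}(X)$ and $\inf G>2$. Since $\dot{C}^{\infty}(X)\subset\mathcal{A}_{\mathrm{phg}}^{G}(\overline{X})$, this gives $u\in\mathcal{A}_{\mathrm{phg}}^{G}(\overline{X})$ with $G>2$, as claimed.

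The principal technical obstacle is the control of \emph{indicial resonances}: the exponents produced by inverting $\Delta_{c}$, namely the indicial set $\mathcal{P}$ of Proposition \ref{conlap}, may coincide with elements of $F$ or with exponents generated at earlier stages of the iteration, in which case logarithmic factors appear in the expansion. This is precisely why one must track index sets closed under the extended union rather than mere exponent sets; Proposition \ref{phgconlap} is already formulated in this language, so the iteration produces a well-defined index family $G$, bounded below by $2$ and possibly containing logarithmic terms. A secondary subtlety is that the Borel summation step must be performed before one can invoke $\dot{C}^{\infty}\subset\mathcal{A}_{\mathrm{phg}}^{G}$; this is standard but requires ensuring that the sequence $\{G_{k}\}$ stabilizes to a single index family $G$.
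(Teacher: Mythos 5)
Your proposal is correct and follows essentially the same route as the paper: linearize the Monge--Amp\`ere equation as $\Delta_c u = (\text{polyhomogeneous}) + (\text{quadratic error in } r^{2\lambda}C_b^\infty)$, apply Proposition \ref{phgconlap} to split off a polyhomogeneous piece with index set bounded below by $2$, absorb that piece into a new polyhomogeneous conical background $\omega_{c,1}=\omega_c+i\partial\bar\partial \tilde u_1$ (cut off near the boundary, as you note), and iterate so the remainder's decay order increases by a fixed amount at each step. The only cosmetic differences are that the paper uses the binomial expansion of $(\omega_c+i\partial\bar\partial u)^n/\omega_c^n$ rather than $\log\det(I+A)$, and sums the pieces directly rather than invoking an explicit Borel-type summation.
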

\begin{proof}
Using the binomial expansion, we have 
$$
\frac{\left(\omega_{\mathrm{c}}+i \partial \bar{\partial} u\right)^n}{\omega_{\mathrm{c}}^n} = 1+\Delta_{\omega_{\mathrm{c}}} u+\sum_{j=2}^n \frac{n!}{(n-j)!j!}\left(\frac{\omega_{\mathrm{c}}^{n-j} \wedge(i \partial \bar{\partial} u)^j}{\omega_{\mathrm{c}}^n}\right)
$$
where $\Delta_{\omega_{ c}}$ is the $\bar{\partial}$-Laplacian associated to the Kähler form $\omega_{\mathrm{c}}$. It is equal to half the Laplace-Beltrami operator associated to the corresponding Riemannian metric $g_c$. We put 

$$f_1(u):=\sum_{j=2}^n \frac{n!}{(n-j)!j!}\left(\frac{\omega_{\mathrm{c}}^{n-j} \wedge(i \partial \bar{\partial} u)^j}{\omega_{\mathrm{c}}^n}\right) \hspace{0.1cm} \text{and  } \hspace{0.1cm} f_2:=e^v - 1.$$
Therefore, we write the complex Monge-Ampère equation in the linear form 
$$\Delta_{\omega_{ c}} u = f_1(u) + f_2$$
with 
$f_2 \in \mathcal{A}_{\mathrm{phg}}^{\widetilde{F}}(\overline{X})$ for a positive index set $\widetilde{F}$ and since all the terms in $f_1$ are at least quadratic, we also have $f_1(u) \in r^{2 \lambda} C_b^{\infty}(X)$. By Proposition \ref{phgconlap}, we get $u=\tilde u_1 + u_1$ with $\tilde u_1$ polyhomogeneous with index set $\widetilde{G_1} >  2$ and $u_1 \in r^{2 + 2 \lambda - \delta} C_b^{\infty}(X) $ for $\delta = \frac{\lambda}{2}$. \\
We set $\omega_{ c, 1} : = \omega_{ c} + i \partial \bar{\partial} \tilde u_1  $. Up to multiplying $\tilde u_1$ by a cut-off function, we can suppose that $\omega_{ c, 1}$ is the Kähler form of a polyhomogeneous conical metric. We, therefore, get the following complex Monge-Ampère equation for $u_1$ 
$$
\frac{\left(\omega_{\mathrm{c,1}}+i \partial \bar{\partial} u_1\right)^n}{\omega_{\mathrm{c,1}}^n}=e^{v_1} \hspace{0.1cm} \text{with} \hspace{0.1cm} v_1 = v + \log\left({\frac{\omega_{\mathrm{c}}^n}{\omega_{\mathrm{c,1}}^n}}\right).
$$
Now, we can apply the same argument as above for $u_1$ to deduce $u_1= \tilde u_2 + u_2$ with $\tilde u_2$ polyhomogeneous with index set $\widetilde{G_2} > 2$ and $u_2 \in r^{2 + 3 \lambda - \delta} C_b^{\infty}(X) $. \\
Using induction, we can iterate the argument to get for each $k \in \N $, 
$u=(\sum_{i=1}^k \tilde u_i) + u_k $ with $\tilde u_i$ polyhomogeneous with index set $\widetilde{G_i} > 2$ for all $i \leq k$ and $u_k \in r^{2+ (k+1) \lambda - \delta} C_b^{\infty}(X) $ and therefore $u$ is polyhomogeneous with index set $G > 2$.  
\end{proof}

 \begin{proof}[Proof of Theorem \ref{tmA}]
To keep notations light, we suppose that $X_0$ has only one isolated singularity. Using notations of Definition \ref{def : conical CY}, let $\omega_c$ be a Kähler form such that $\restr{\omega_c}{V} = P^{*} \omega_{C_p}$ and $\omega_c \in [\omega_{CY}]$. Such a Kähler form exists by \cite[Proposition 2.4.]{arezzo2016csck} since, by supposition, $[\omega_{CY}]=[\omega]$ for smoothly Kähler form $\omega$. Therefore, $\omega_{CY} = \omega_c + i \partial \overline{\partial} u$ for some function $u$ smooth on $X \setminus \{ p\}$. By \cite[Lemma A.1]{hein_calabi-yau_2017}, we can suppose that $u \in r^{\lambda + 2} C_{b}^{\infty}(X)$. Therefore, $u$ is a solution in $ r^{\lambda + 2} C_{b}^{\infty}(X)$ to the complex Monge–Ampère equation:  
$$
\frac{\left(\omega_{c}+i \partial \bar{\partial} u\right)^n}{\omega_{c}^n}=e^v \text{ , }\hspace{0.1cm} \hspace{0.1cm}  v = \log \left(\frac{ i^{n^2} \Omega \wedge \bar \Omega}{\omega_{c}^{n}}\right),
$$
where $\Omega$ is a nowhere vanishing holomorphic $(n,0)$-form on $X^{\text{reg}}$. Since $X$ is a normal variety and $K_X$ is trivial, then $\Omega$ is analytic near $x \in  X \backslash X^{\text {reg }}$ and therefore, in particular, polyhomogeneous. Also, by construction, $\omega_{c}$ is a cone metric and therefore also  polyhomogeneous. Since the ratio of the volume forms is a positive function with non-vanishing leading term at $r=0$, we conclude, using the Taylor series of the functions $\frac{1}{1+y}$ and $\log(1+y)$ that $v= \log \left(\frac{i^{n^2} \Omega \wedge \bar \Omega}{\omega_{c}^{n}}\right)$ is itself polyhomogeneous and since $u \in r^{2+ \lambda} C_b^\infty$ implies $i \partial \overline{\partial} u \in r^{\lambda} C_b^\infty$, then $\lim_{r \rightarrow 0} v =0$ and therefore it has positive index set. Thus, all conditions of Lemma \ref{tmphg} are verified for and we get the polyhomogeneity of $\omega_{CY} = \omega_{c} + i \partial \bar \partial u $ from that of  $u$ and $\omega_{c}$.   
 \end{proof}
Here the condition of being smoothly Kähler is needed for the existence of the Kähler form $\omega_c$ from above. See \cite[Proposition 2.4]{arezzo2016csck}. The case of Calabi-Yau orbifolds is already well-known since the Ricci-flat Kähler metrics are induced near an orbifold point $x$ locally isomorphic to $\C^n/G$  by genuine smooth Kähler metrics on $\C^n$.\\
 
 The rest of this paper will be dedicated to statements, constructions and proofs related to Theorems \ref{tmB} and \ref{tmC}. 
 \section{Conical degenerations of Calabi-Yau metrics} \label{set}
 In this section, we will give the setting and the statement of our main general theorem (See Theorem \ref{main}) and after that we will describe the constructions needed on crepant resolutions and polarized smoothings to apply our theorem to get Theorems \ref{tmB} and \ref{tmC}. The proof of Theorem \ref{main} is carried out in Sections \ref{secform} and \ref{secfix}.
\subsection{Surgery space}
\begin{defi} \label{srg}
We call a pair $\left(\mathcal{X}_b, \pi_b \right)$ a \textbf{surgery space} if it satisfies the following: 
\begin{itemize}
    \item $\mathcal{X}_b$ is a manifold with corners.
    \item $\pi_b : \mathcal{X}_b \rightarrow [0, \varepsilon_0]_\varepsilon $ is a b-fibration in the sense of Melrose such that 
    \begin{itemize}
        \item For $\varepsilon >0$, $X_\varepsilon := \pi_b^{-1} (\left\{ \varepsilon \right\})$ is a smooth closed manifold;
        \item $\pi_b^{-1} (\left\{ 0 \right\}) = \left( \bigcup_{i=1}^N B_{I,i} \right) \cup B_{II}$;
        \item For all $i \in \{1,2, \cdots, N \}$, $B_{I,i}$ is a manifold with smooth, closed, connected boundary $\Sigma_i$;
        \item $B_{II}$ is a manifold with boundary such that: $\partial B_{II} = \bigcup_{i=1}^N \Sigma_i$;
        \item We can choose boundary defining functions for $B_{I,i}$ and $B_{II}$ denoted by $\rho_{1,i}$ and $\rho_{2}$ respectively such that $\varepsilon = \rho_2 \rho_{1,1} \cdots \rho_{1,N}$.
    \end{itemize}
\end{itemize}
\end{defi}
\vspace{0.5cm}
\begin{rem}
\hfill
\begin{itemize}
\item The name surgery space is in accordance with the space introduced in \cite{melrose_analytic_1995}. Surgery, here, refers to the fact that the manifolds $B_{I,i}$ are attached to $B_{II}$ along their respective boundaries.
\item For simplicity, to lighten the notations, we will suppose that $\partial B_{II}$ has only one connected component, i.e, there is only one $B_I$ and $\varepsilon = \rho_1 \rho_2$. However, our methods extend to the case of several connected components of $B_{II}$ with only minor modifications.
\item 
In practical examples, $B_{II}$ represents the lift of some manifold with isolated singularities under an appropriate blow-up, while the $B_{I,i}$ are the new faces appearing when blowing up the singularities.
\item The parametric connected sums considered in \cite[Definition 7.6]{pacini_desingularizing_2013} are surgery spaces in the above sense when the marking is taken to be the whole set of ends.
\item In general, if $u$ is any type of object defined on $\mathcal{X}_b$ (e.g. a function), we use the notation $u_\varepsilon$ to denote its restriction to a fiber $X_\varepsilon$.
\end{itemize}
\end{rem}
 Now, since $\mathcal{X}_b$ is a manifold with corners, we can define as usual the \textbf{b-tangent bundle} $^{b} T \mathcal{X}_b$. Since $\pi_{b}$ is a b-fibration, following \cite{melrose_analytic_1995}, we can define a new bundle called the \textbf{$(b, \varepsilon)$-tangent bundle} in the following way: 
$$
^{b,\varepsilon} T \mathcal{X}_b := \ker \hspace{0.05cm} ^{b} (\pi_{b})_{*} \subset \hspace{0.05cm} ^{b} T \mathcal{X},
$$
where $^{b} (\pi_{b})_{*}$ is the b-differential of $\pi_{b}$. The sections of this bundle are the vector fields that are tangent to both the boundary and to the fibers $X_\varepsilon$. Therefore, the restriction of  $^{b,\varepsilon} T \mathcal{X}_b$ on $X_\varepsilon$ is isomorphic to $TX_\varepsilon$.\\
For our purposes, we also consider a rescaled version called the \textbf{$(c, \varepsilon)$-tangent bundle} defined by: 
$$
^{c,\varepsilon} T \mathcal{X}_b := \frac{1}{\rho_{1}} \hspace{0.05cm} ^{b,\varepsilon} T \mathcal{X}_b,
$$
and we denote its dual by $^{c,\varepsilon} T^{*} \mathcal{X}_b$.
\subsection{Setting of the general theorem} 
Now, the setting for our main theorem is as follows. We let $\left(\mathcal{X}_b, \pi_b, \omega, J \right)$ be such that:
    \begin{itemize}
    \item $(\mathcal{X}_b , \pi_b)$ is a surgery space and we suppose, for convenience, that $\partial B_{II}$ has only one connected component,
    \item $\omega \in \mathcal{A}_{phg \geq 0} \left( \hspace{0.01cm} ^{c,\varepsilon} T^{*} \mathcal{X}_b \wedge \hspace{0.01cm} ^{c,\varepsilon} T^{*} \mathcal{X}_b \right)$,
    \item $J \in \mathcal{A}_{phg \geq 0} \left( End \left( \hspace{0.01cm} ^{c,\varepsilon} T \mathcal{X}_b \right) \right)$,
    \end{itemize}
    such that:
    \begin{enumerate}
        \item For all $\varepsilon >0$,  $(X_\varepsilon, J_\varepsilon)$ is a smooth closed Calabi-Yau manifold and $\omega_\varepsilon := \restr{\omega}{X_\varepsilon}$ is a Kähler form,
        \item $\left(B_{II}, \restr{\omega}{B_{II}}, \restr{J}{B_{II}}\right)$ is a Calabi-Yau manifold with a polyhomogeneous conical Ricci-flat Kähler metric modeled on a Calabi-Yau cone $\left(C_0, \omega_{0}, J_0 \right)$,
        \item $\left(B_{I}, \restr{\frac{\omega}{\varepsilon^2}}{B_{I}}, \restr{J}{B_{I}}\right)$ is a Calabi-Yau manifold with a polyhomogeneous asymptotically conical Ricci-flat Kähler metric modeled on the same cone $\left(C_0, \omega_{0}, J_0 \right)$. 
     \end{enumerate}
     
    \begin{rem} \label{remden}
    Choosing boundary defining functions $\rho_1$ and $\rho_2$ for $B_{I}$ and $B_{II}$ respectively such that $\varepsilon = \rho_1 \rho_2$, the conditions on $\omega$ and $J$ above imply that the associated metric with parameter $g(u,v) := \omega(u, Jv) $ is of the form $g = \rho_1^2 \cdot g_{b, \varepsilon}$ where $g_{b, \varepsilon}$ is a b-surgery metric in the sense of Mazzeo-Melrose \cite{melrose_analytic_1995}. Therefore, in particular, we have $\omega^n = \rho_1^{2n} d g_{b, \varepsilon}$ as densities.
    \end{rem}
    \subsubsection{Polyhomogeneity of the Ricci potential}
    Before stating our theorem, we start by giving a lemma that proves that, with the conditions considered above, the potential of the Ricci form is in fact polyhomogeneous. To do so, we will see it as a solution to a Laplace equation.  For that reason, we start by mentioning the following result, that we need later, which gives the uniform boundedness of the Laplacian $\Delta_\varepsilon$ associated to $\omega_\varepsilon$ on weighted Sobolev spaces proved in \cite[Theorem 12.3]{pacini_desingularizing_2013}. In what follows, we denote the weighted Sobolev spaces under consideration by 
    $$H_{\nu}^k (\mathcal{X}_b) := \rho_1^{\nu}H_{b, \varepsilon}^k(\mathcal{X}_b).$$
\begin{lm} \label{lmunf}
Choose a weight $\nu \in(2-2n, 0)$. Assume $B_{I}$ has only one connected component. Then there exists a uniform constant $C>0$ and a subspace $\left(H_{ \nu}^k\left(\mathcal{X}_b\right) \right)' \subset H_{ \nu}^k\left(\mathcal{X}_b\right)$ such that, for $\varepsilon >0$ sufficiently small,
$$
\restr{H_{ \nu}^k\left(\mathcal{X}_b\right)}{X_\varepsilon}= \restr{\left(H_{ \nu}^k\left(\mathcal{X}_b\right) \right)'}{X_\varepsilon} \oplus \mathbb{R}
$$
and, for all $f \in \restr{\left(H_{ \nu}^k\left(\mathcal{X}_b\right) \right)'}{X_\varepsilon}$,
$$
\|f\|_{H_{\nu}^k} \leq C\left\|\Delta_{ \varepsilon} f\right\|_{H_{\nu -2 }^{k-2}} .
$$

Furthermore, the image of the restricted operator $\Delta \mid_{\left(H_{ \nu}^k\left(\mathcal{X}_b\right) \right)'}$ coincides with the image of the full operator $\Delta$.
\end{lm}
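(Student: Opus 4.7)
The plan is a contradiction-and-compactness argument in the style of \cite[Theorem 12.3]{pacini_desingularizing_2013}, phrased in the b-surgery calculus of Mazzeo-Melrose \cite{melrose_analytic_1995} and driven by the two model invertibility statements: the conical Laplacian $\Delta_c$ on $B_{II}$ (Proposition \ref{conlap}) and the asymptotically conical Laplacian $\Delta_{AC}$ on $B_{I}$ (Proposition \ref{aclap}). The range $\nu \in (2-2n, 0)$ is precisely the overlap of the two Fredholm windows: since $\rho_1|_{B_{II}}$ is a bdf for $\partial B_{II}$ and models the conical radial coordinate $r$, $\nu \in (2-2n, 0)$ is the conical weight range in which $\Delta_c$ is an isomorphism on mean-zero functions; along $B_I$, the relation $\rho_1 = \varepsilon / \rho_2$ shows that the weight $\rho_1^{\nu}$ appears near the corner as $\rho_2^{-\nu}$, and $-\nu \in (0, 2n-2)$ lies in the AC Fredholm range of Proposition \ref{aclap}.

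I would first fix the subspace $(H_\nu^k(\mathcal{X}_b))'$ as the space of $f \in H_\nu^k(\mathcal{X}_b)$ whose fiberwise average against the reference b-surgery volume density from Remark \ref{remden} vanishes identically in $\varepsilon$. The constant function is in $H_\nu^k$ because $\nu < 0$ makes $\rho_1^{-\nu}$ bounded on $\mathcal{X}_b$, so the stated direct-sum splitting on each closed fiber $X_\varepsilon$ is automatic. Once the uniform coercivity estimate is established, the image statement is immediate: coercivity implies that the restricted operator is injective with closed range, and on the closed fiber $X_\varepsilon$ one has $\ker \Delta_\varepsilon = \mathbb{R}$ and $\mathrm{im}\,\Delta_\varepsilon$ equals the $L^2$-orthogonal complement of constants, so the restricted and the full images coincide.

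For the estimate, assume by contradiction there exist $\varepsilon_j \to 0$ and $f_j \in (H_\nu^k(\mathcal{X}_b))'|_{X_{\varepsilon_j}}$ with $\|f_j\|_{H_\nu^k} = 1$ and $\|\Delta_{\varepsilon_j} f_j\|_{H_{\nu-2}^{k-2}} \to 0$. Partition each $X_{\varepsilon_j}$ into a conical region $\{\rho_1 \geq \delta\}$ and an asymptotically conical region $\{\rho_1 \leq \delta\}$ for a small fixed $\delta > 0$. Uniform interior b-elliptic regularity for the $(b,\varepsilon)$-Laplacian provides subsequential compactness in $H_{b,\mathrm{loc}}^k$ on the interiors of $B_I$ and $B_{II}$. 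In the conical regime, the extracted limit $f_\infty \in \rho_1^{\nu} H_b^{k}(B_{II})$ is mean-zero and annihilated by $\Delta_c$, so Proposition \ref{conlap} gives $f_\infty = 0$. Rescaling in the AC regime and using the weight conversion above, the extracted limit $\tilde f_\infty \in \rho_2^{-\nu} H_b^k(B_I)$ satisfies $\Delta_{AC} \tilde f_\infty = 0$ with decay at infinity, so Proposition \ref{aclap} gives $\tilde f_\infty = 0$.

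The main obstacle is that the vanishing of both model limits does not by itself contradict $\|f_j\|_{H_\nu^k} = 1$: the mass of $f_j$ could a priori concentrate in the intermediate neck $\delta' \leq \rho_1 \leq \delta$. This is excluded by the following rescaling argument: a mass concentration along a shrinking annular collar would, after dilation by the width of that collar, produce a nontrivial harmonic function on the model cone $C_0$ lying in a weighted $L^2$-space whose weight is still in the admissible Fredholm range on either side of each indicial root; however, by separation of variables on $C_0$ (or equivalently by the injectivity of the normal operator of the $(b,\varepsilon)$-Laplacian at the corner on this weighted space), such harmonic functions must vanish. Ruling out neck concentration in this way forces the $H_\nu^k$-mass of $f_j$ to concentrate in one of the two model regimes, and the vanishing statements above yield the required contradiction. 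This establishes the uniform estimate for $\varepsilon$ sufficiently small and completes the proof.
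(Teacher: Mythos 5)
The paper does not actually prove this lemma: it is quoted verbatim (up to notation) from \cite[Theorem 12.3]{pacini_desingularizing_2013}, whose uniform estimates are obtained by directly patching the two model estimates with cutoffs in scale-invariant weighted norms rather than by a compactness--contradiction scheme. Your proposal is therefore a genuinely different (and standard) route to the same statement, and its architecture is sound: the weight window $(2-2n,0)$ is indeed the overlap of the conical and asymptotically conical Fredholm ranges under the conversion $\rho_1=\varepsilon/\rho_2$, the splitting off of constants and the identification of the restricted and full images are as trivial as you say, and the contradiction argument with limits on $B_{II}$, on the rescaled $B_I$, and on the exact cone $C_0$ in the neck is the classical alternative to Pacini's patching. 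What a compactness argument buys is robustness (no need to track the precise constants in the interaction terms); what Pacini's patching buys is an explicit uniform parametrix, which is closer in spirit to how the paper later uses the lemma.

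Two points in your sketch need repair before this would count as a proof. First, the complement $(H^k_\nu)'$ should be cut out by the fiberwise average against $\omega_\varepsilon^n$, not against the b-surgery density $dg_{b,\varepsilon}$ of Remark \ref{remden}: the b-volume of the neck diverges like $\log(1/\varepsilon)$ and the b-volume of $B_{II}$ is infinite, so the b-average neither passes to the limit nor excludes the constants (which do lie in $r^\nu L^2_b(B_{II})$ for $\nu<0$); with the metric average the volume concentrates on $B_{II}$ and the limit is mean-zero there, which is exactly the hypothesis Proposition \ref{conlap} requires to kill the constant in the kernel. Second, the last paragraph is the technical heart and is only gestured at. That the three model limits vanish does not yet contradict $\|f_j\|_{H^k_\nu}=1$; you must show the norm cannot leak into the transition regions at every intermediate scale, which requires a quantitative statement (e.g.\ a pigeonhole over dyadic shells in $\rho_1$, rescaling the shell carrying a definite fraction of the norm, and invoking injectivity of the cone Laplacian at that scale). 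Moreover the phrase ``admissible Fredholm range on either side of each indicial root'' is not the right justification: the correct point is that $(2-2n,0)$ contains \emph{no} indicial roots of $\Delta_{C_0}$, so a harmonic function on the complete cone lying in $r^{\nu}L^2_b$ at both ends has every separated mode forced to grow faster than $r^{\nu}$ at one end, hence vanishes. Also note that the contradiction naturally yields the estimate at the lowest Sobolev level; the full $H^k_\nu$ bound then follows from uniform local b-elliptic estimates, which you should state explicitly.
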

 Now, we prove the polyhomogeneity of the Ricci potential
 
     \begin{pr} \label{ricpot}
         With the same notations as before, let $v$ be defined on $\mathcal{X}_b$ by: 
    $$
    \mathrm{Ric} (\omega_\varepsilon) =  i \partial \overline{\partial}v_\varepsilon, \hspace{0.1cm} \int_{X_\varepsilon} \omega_\varepsilon^n = \int_{X_\varepsilon} e^{v_\varepsilon} \omega_\varepsilon^{n}, \hspace{0.1cm} v_\varepsilon = \restr{v}{X_\varepsilon}.
    $$ 
    Then, $v$ is polyhomogeneous and vanishes at $B_{I}$ and $B_{II}$. 
     \end{pr}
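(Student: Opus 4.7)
The strategy, as already outlined in the introduction, is to interpret $v_\varepsilon$ as the solution of a Laplace equation on each fiber and then exploit elliptic regularity together with the uniform analysis on the surgery space. Tracing the identity $\mathrm{Ric}(\omega_\varepsilon) = i\partial\overline{\partial} v_\varepsilon$ with respect to $\omega_\varepsilon$ produces
\begin{equation*}
\Delta_\varepsilon v_\varepsilon = s_\varepsilon,
\end{equation*}
where $\Delta_\varepsilon$ is the $\bar\partial$-Laplacian of $\omega_\varepsilon$ and $s_\varepsilon$ is (a multiple of) its scalar curvature; the normalization $\int e^{v_\varepsilon}\omega_\varepsilon^n = \int\omega_\varepsilon^n$ then pins down the additive constant. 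The first task is to verify that $s$ defines a polyhomogeneous function on $\mathcal{X}_b$ with positive index sets at both boundary hypersurfaces. Since $\omega$ and $J$ are polyhomogeneous with non-negative index sets as sections of the $(c,\varepsilon)$-bundles, the Christoffel symbols, the curvature tensor, the Ricci form and the scalar curvature of the b-surgery metric $g_{b,\varepsilon}$ of Remark \ref{remden} are polyhomogeneous differential expressions in the components of $\omega$ and $J$, and hence polyhomogeneous on $\mathcal{X}_b$. The hypotheses that $\omega|_{B_{II}}$ is Ricci-flat and $(\omega/\varepsilon^2)|_{B_I}$ is Ricci-flat, combined with the scale invariance of the Ricci form, force $\mathrm{Ric}(\omega_\varepsilon)$ to vanish at both $B_I$ and $B_{II}$; contracting with the polyhomogeneous form $\omega_\varepsilon$ then yields that $s_\varepsilon$ vanishes at positive order at both boundary faces.

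The second task is to invert $\Delta_\varepsilon$ with control uniform in $\varepsilon$. Choosing a weight $\nu \in (2-2n,0)$ outside the set of indicial roots, Lemma \ref{lmunf} supplies the uniform estimate
\begin{equation*}
\|v_\varepsilon\|_{H^k_\nu(\mathcal{X}_b)} \leq C\,\|s_\varepsilon\|_{H^{k-2}_{\nu-2}(\mathcal{X}_b)}
\end{equation*}
on an appropriate complement of the constants. Together with the positive-order vanishing of $s_\varepsilon$ at both faces, this produces a uniform weighted Sobolev decay for $v_\varepsilon$ at $B_I$ and $B_{II}$, and in particular shows that the limit values at the two faces are constant, which must equal $0$ by the normalization. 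The additive constant needed to enforce $\int e^{v_\varepsilon}\omega_\varepsilon^n = \int\omega_\varepsilon^n$ is itself a smooth function of $\varepsilon$ alone which tends to $0$ as $\varepsilon \to 0$, is polyhomogeneous a posteriori, and so does not spoil the expansion.

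The final step is to bootstrap the initial weighted Sobolev decay into a full polyhomogeneous expansion on $\mathcal{X}_b$. Near $B_{II}$, the operator $\Delta_\varepsilon$ degenerates to the conical Laplacian of the model cone $C_0$, so Proposition \ref{phgconlap} applies fiberwise and produces a polyhomogeneous expansion at $B_{II}$; symmetrically, near $B_I$, the rescaled operator $\varepsilon^2\Delta_\varepsilon$ degenerates to the asymptotically conical Laplacian of the AC model, and Proposition \ref{aclap} produces a polyhomogeneous expansion at $B_I$. The main technical obstacle --- and the reason for carrying out the analysis on the surgery space $\mathcal{X}_b$ rather than fiber by fiber --- is to match these two asymptotic expansions across the corner $\Sigma = B_I \cap B_{II}$ and glue them into a single polyhomogeneous function on the manifold with corners $\mathcal{X}_b$. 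This is handled by reading $\Delta_\varepsilon$ as a b-type operator on $\mathcal{X}_b$ whose indicial operators at the two boundary hypersurfaces are respectively the conical and asymptotically conical Laplacians of the common cone model $C_0$, and then appealing to joint polyhomogeneous elliptic regularity on manifolds with corners to conclude polyhomogeneity of $v$ with positive index sets at both $B_I$ and $B_{II}$.
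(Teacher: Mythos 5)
Your setup is the same as the paper's: trace the identity to get $\Delta_\varepsilon v_\varepsilon = s_\varepsilon$, handle the additive constant via the normalization, and aim to combine the conical and asymptotically conical mapping properties with the uniform estimate of Lemma \ref{lmunf}. But the core of the argument is missing, and two of your intermediate claims do not hold as stated.

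First, $s_\varepsilon$ does not vanish at $B_I$; only $\varepsilon^2 s_\varepsilon$ does. The metric is $g = \rho_1^2 g_{b,\varepsilon}$, and Ricci-flatness of $\restr{\omega/\varepsilon^2}{B_I}$ controls the scalar curvature of the rescaled metric, so one must first pass to $\hat s := \varepsilon^2 s$ (and correspondingly $\hat u = \varepsilon^2 u$) before any order-by-order analysis; this rescaling also dictates the thresholds $2n$ at $B_{II}$ and $2n-2$ at $B_I$ beyond which the face-by-face construction must be restarted. Second, the uniform estimate $\|f\|_{H^k_\nu} \leq C\|\Delta_\varepsilon f\|_{H^{k-2}_{\nu-2}}$ is a bound in a weight in $\rho_1$ uniform in $\varepsilon$; it gives neither decay of $v_\varepsilon$ as $\varepsilon \to 0$ nor constancy of the boundary values at $B_I$ and $B_{II}$, so your second paragraph does not establish what it claims. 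In the paper's argument this lemma is used only at the very end, to remove an error that has already been made $O(\varepsilon^\infty)$ by a formal solution.

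The genuine gap is the construction of that formal solution, which you replace by an appeal to ``joint polyhomogeneous elliptic regularity on manifolds with corners.'' No such black box is available here: the expansion must be built order by order, alternately at $B_{II}$ and $B_I$, rewriting $\rho_2^\delta(\log\rho_2)^i$ in terms of $\varepsilon^\delta(\log\varepsilon)^j$ and checking that each correction at one face does not destroy the gain already achieved at the other. Moreover, inverting the conical Laplacian on the compact face $B_{II}$ at each order requires the solvability condition $\int_{B_{II}} \tilde s_{\delta,j}\,\omega_2^n = 0$, which is not automatic; the paper verifies it by applying the Melrose push-forward theorem to the identically vanishing function $c \mapsto \int_{\varepsilon=c} s\,\omega^n$ and identifying the coefficient of $\varepsilon^\delta(\log\varepsilon)^j$ for $\delta < 2n$ with exactly this integral. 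The same push-forward theorem is what makes the additive constant $c_\varepsilon$ polyhomogeneous (it is not smooth in $\varepsilon$ in general). Without these steps the proposal asserts the conclusion rather than proving it.
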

     
     \begin{proof}
       Since $\omega$ and $J$ are polyhomogeneous, then so is the Ricci-form $\mathrm{Ric}(\omega_\varepsilon)$ as seen on $\mathcal{X}_b$ and it is equal to zero on $B_{I}$ and $B_{II}$ by supposition. We want to prove that the potential fixed by integration is also polyhomogeneous. First, by applying the trace, we get 
       $$
       s_{\omega_\varepsilon} = \mathrm{tr}_{\omega_\varepsilon}(\mathrm{Ric}(\omega_\varepsilon)) = 2n \frac{i \partial \overline{\partial}v_\varepsilon \wedge \omega_\varepsilon^{n-1}}{\omega_\varepsilon^{n}} = \Delta_\varepsilon v_\varepsilon,
       $$
       where $s_{\omega_\varepsilon}$ is the scalar curvature of $\omega_\varepsilon$ and $\Delta_\varepsilon$ is the Laplacian associated to $\omega_\varepsilon$. For simplicity, we write
 \begin{align} \label{linpot}
       \Delta v = s \hspace{0.1cm} \hspace{0.1cm}   
\end{align}
    We will construct a polyhomogeneous solution $u$ to equation (\ref{linpot}) which also vanishes at $B_{I}$ and $B_{II}$. If such a construction is possible, then, since harmonic functions on closed manifolds are constant, we get that $v = u + c$, where $c$ is constant on each fiber and it is fixed by the integral
    $$
    \int_{X_\varepsilon} \omega_\varepsilon^n = \int_{X_\varepsilon} e^{v_\varepsilon} \omega_\varepsilon^n = e^{c_\varepsilon}\int_{X_\varepsilon} e^{u_\varepsilon}\omega_\varepsilon^n,
    $$
    therefore,
    $$
    c_\varepsilon = - \log \left( \frac{\int_{X_\varepsilon} e^{u_\varepsilon} \omega_\varepsilon^n}{\int_{X_\varepsilon} \omega_\varepsilon^n} \right) = - \log \left( 1 + \frac{\int_{X_\varepsilon} \left(e^{u_\varepsilon} -1 \right)\omega_\varepsilon^n}{\int_{X_\varepsilon} \omega_\varepsilon^n} \right).
    $$
    Hence, if $u$ is polyhomogeneous, then $\left(e^{u} -1 \right)\omega^n$ is a polyhomogeneous $(c,\varepsilon)$-density, therefore by the Melrose push-forward theorem \ref{fwd}, $\int_{X_\varepsilon} \left(e^{u_\varepsilon} -1 \right)\omega_\varepsilon^n$ is polyhomogeneous and vanishes at $\varepsilon = 0$ because $u$ does. Similarly, $\int_{X_\varepsilon} \omega_\varepsilon^n$ is also polyhomogeneous and it is non-vanishing at $\varepsilon = 0$. Therefore, using the Taylor series of $\frac{1}{1+y}$, we deduce that the quotient is also polyhomogeneous. Finally, using the Taylor expansion of $\log (1+y)$, we deduce that $c$ is polyhomogeneous.\\
    
    Now, we get back to the construction of the polyhomogeneous solution $u$. First, we will solve formally in the polyhomogeneous sense, and then solve exactly using the isomorphism theorem for the Laplacian given in Lemma \ref{lmunf}. \\

    First, since $s$ is the scalar curvature, we get that $s$ is polyhomogeneous and satisfies
    $$
    \int_{X_\varepsilon} s_\varepsilon \omega_\varepsilon^n =0.
    $$
    Moreover, $s$ vanishes at $B_{II}$ and $\varepsilon^2 s$ vanishes on $B_I$. For this reason, first, we replace $s$ by $\hat s := \varepsilon^2 s$ and we solve 
\begin{align} \label{linpot2}
    \Delta \hat u = \hat s \hspace{0.1cm} \hspace{0.1cm} 
\end{align}
    
    We will solve formally, first, on $B_{I}$ and then on $B_{II}$. 

    \begin{itemize}
        \item Suppose $2< \delta < 2n$ is the smallest power of $\rho_2$ appearing in the polyhomogeneous expansion of $\hat s$ near $B_{II}$ and let $ \hat s_\delta :=  \sum_{i=0}^k \hat s_{\delta, i} \rho_2^{\delta}\hspace{0.03cm}(\log \rho_2)^i$ be the terms of the expansion at such order, i.e.
        $$
        \hat s = \hat s_\delta + o(\rho_2^\delta) =   \sum_{i=0}^k \hat s_{\delta, i} \rho_2^{\delta}\hspace{0.03cm}(\log \rho_2)^i + o(\rho_2^\delta).
        $$
Since $\hat s$ vanishes at $B_{I}$, we get that $\hat s_{\delta, i} \in \rho_1^{\lambda}C_b^\infty (B_{II})$, for some $\lambda >0$. Using the fact that $\varepsilon = \rho_1 \rho_2$, we rewrite: 
        \begin{align*}
        \sum_{i=0}^k \hat s_{\delta, i} \rho_2^{\delta}\hspace{0.03cm}(\log \rho_2)^i & = \sum_{i=0}^k \hat s_{\delta, i} \left(\frac{\varepsilon}{\rho_1}\right)^{\delta}\hspace{0.03cm} \left(\log \left(\frac{\varepsilon}{\rho_1}\right) \right)^i \\ & = \sum_{i=0}^k \hat  s_{\delta, i} \rho_1^{- \delta} \varepsilon^{\delta}\hspace{0.03cm} \left(\log \varepsilon  - \log \rho_1 \right)^i \\ & =\sum_{i=0}^k \hat s_{\delta, i} \rho_1^{- \delta} \varepsilon^{\delta}\hspace{0.03cm}  \left( \sum_{l=0}^i C_{i,l} (\log \varepsilon)^{i-l} (\log \rho_1)^{l} \right) \\ & = \sum_{j=0}^{k} \tilde s_{\delta, j} \varepsilon^{\delta}  \left(\log \varepsilon \right)^j,
         \end{align*}
         where $\tilde s_{\delta, j} := \sum_{r=j}^{k} C_{r,r-j} \hat s_{\delta, r} \hspace{0.02cm}\rho_1^{- \delta} \left( \log \rho_1 \right)^r$ and therefore $\restr{\tilde s_{\delta, j}}{B_{II}} \in \rho_1^{b} C_b^\infty(B_{II})$, for any $- \delta < b < - \delta + \lambda$. For simplicity, we also denote the restriction $\restr{\tilde s_{\delta, j}}{B_{II}}$ by $\tilde s_{\delta, j}$. We choose a weight $ a \in (0,2) \setminus \mathcal{P}$ or $ a \in (2-2n,0) $ as in Proposition \ref{conlap} such that $a -2  > - \delta $ and 
    $$
    \tilde s_{\delta,j} \in \rho_1^{a-2} C_b^{\infty}(B_{II}).
    $$
    This, alone, is not enough to apply the isomorphism result of Proposition \ref{conlap}, we also need to verify that
\begin{align} \label{pushfwdeq1}
    \int_{B_{II}} \tilde s_{\delta,j} \hspace{0.02cm} \omega_2^n =0.
   \end{align}
    Indeed, we know that the map 
\begin{align} \label{pushfwdeq2}
c \;\mapsto\; \int_{\varepsilon = c} 
s\, \omega^n,
\end{align}
vanishes identically and therefore all the coefficients in its polyhomogeneous expansion near $c=0$ are zero. On the other hand, as mentioned in Remark \ref{remden}, since in terms of $(b, \varepsilon)$-densities, we have  
$$
\omega^n = \rho_1^{2n} dg_{b,\varepsilon},
$$
for a $(b,\varepsilon)$-metric $g_{b,\varepsilon}$ in the sense of \cite{melrose_analytic_1995}, then $\omega^n$ is of order $2n$ on $B_I$ and therefore, by the push-forward theorem of Melrose \ref{fwd}, the coefficient at the order $\varepsilon^\delta (\log \varepsilon)^j$ with $\delta < 2n$ in (\ref{pushfwdeq2}) comes from the asymptotic expansion of $s\, \omega^n$ on $B_{II}$ and is, exactly, given by
$$\int_{B_{II}} \tilde s_{\delta,j} \hspace{0.02cm} \omega_2^n.$$
Therefore (\ref{pushfwdeq1}) is verified. Thus, by Proposition \ref{conlap} and Proposition \ref{phgconlap}, there exists a unique polyhomogeneous $w_{\delta,j} \in \rho_1^a C_b^\infty (B_{II})$ with $\int_{B_{II}} w_{\delta,j} \hspace{0.03cm} \omega_2^n = 0$ such that 
$$
\Delta_{II} w_{\delta,j} =  \tilde s_{\delta,j}.
$$
Extending the coefficients of the expansion of $w_{\delta,j}$ off $B_{II}$ and setting $\hat u_{\delta,j} = w_{\delta,j} \varepsilon^\delta (\log \varepsilon)^j$ we get that 
\begin{align*}
\Delta \hat u_{\delta,j} & = \varepsilon^\delta(\log \varepsilon)^j \Delta w_{\delta,j} \\& = \varepsilon^\delta (\log \varepsilon)^j\Delta_{II}  w_{\delta,j} + \varepsilon^\delta(\log \varepsilon)^j s'\\& = \varepsilon^\delta (\log \varepsilon)^j \tilde s_{\delta,j} + \varepsilon^\delta(\log \varepsilon)^j s', \end{align*}
where $s' \in o (1)$ when $ \rho_2 \rightarrow 0$. Therefore, putting $\hat u_{\delta} = \sum_{j=0}^k \hat u_{\delta,j}  = \sum_{j=0}^k w_{\delta,j} \hspace{0.02cm} \varepsilon^\delta (\log \varepsilon)^j$, we get
$$
\Delta \hat u_\delta = \sum_{j=0}^k \tilde s_{\delta,j} \hspace{0.02cm}  \varepsilon^\delta (\log \varepsilon)^j + o(\rho_2^\delta) = \sum_{i=0}^k \hat  s_{\delta, i} \rho_2^{\delta} (\log \rho_2)^i + o(\rho_2^\delta) = \hat s_\delta + o(\rho_2^\delta).
$$
Therefore, $\hat u_\delta$ is a formal solution for (\ref{linpot2}) at order $\delta$ near $B_{II}$. Replacing $\hat s$ by $\hat s - \Delta \hat u_\delta$ and noticing that 
$$
\int (\hat s - \Delta \hat u_\delta) \omega^n = \int \hat s \omega^n - \int \Delta \hat u_\delta \omega^n = 0 - 0 = 0,
$$
we can iterate the argument to remove all terms in the expansion near $B_{II}$ of order $0<\delta < 2n$. Therefore, putting $\hat u_2 = \sum_{0 < \delta < 2n} \hat u_\delta$, we can suppose that 
$$
\hat s - \Delta \hat u_2 \in \rho_2^{2n - \beta} C_b^\infty(\mathcal{X}_b) \hspace{0.1cm}\text{for some}\;\beta>0\;\text{arbitrarily small}.
$$
        \item Now, we solve on $B_{I}$ without comprimising the improvement on $B_{II}$. Suppose $0< \delta< 2n -2 $ is the smallest power of $\rho_1$ in the expansion of $\hat s - \Delta \hat u_2$ on $B_{I}$ and suppose $\hat s_\delta =\sum_{i=0}^k \hat s_{\delta,i} \rho_1^{\delta} (\log \rho_1)^i$ is its expansion at that order, i.e.
$$
\hat s - \Delta \hat u_2 = \hat s_\delta + o(\rho_1^\delta) = \sum_{i=0}^k \hat s_{\delta,i} \rho_1^{\delta} (\log \rho_1)^i + o(\rho_1^\delta).
$$
As before, we have 
\begin{align*}
        \sum_{i=0}^k \hat s_{\delta, i} \rho_1^{\delta}\hspace{0.03cm}(\log \rho_1)^i & = \sum_{i=0}^k \hat s_{\delta, i} \left(\frac{\varepsilon}{\rho_2}\right)^{\delta}\hspace{0.03cm} \left(\log \left(\frac{\varepsilon}{\rho_2}\right) \right)^i \\ & = \sum_{i=0}^k \hat  s_{\delta, i} \rho_2^{- \delta} \varepsilon^{\delta}\hspace{0.03cm} \left(\log \varepsilon  - \log \rho_2 \right)^i \\ & =\sum_{i=0}^k \hat s_{\delta, i} \rho_2^{- \delta} \varepsilon^{\delta}\hspace{0.03cm}  \left( \sum_{l=0}^i C_{i,l} (\log \varepsilon)^{i-l} (\log \rho_2)^{l} \right) \\ & = \sum_{j=0}^{k} \tilde s_{\delta, j} \varepsilon^{\delta}  \left(\log \varepsilon \right)^j,
         \end{align*}
where $\tilde s_{\delta, j}  : = \sum_{r=j}^{k} C_{r,r-j} \hat s_{\delta, r} \hspace{0.02cm}\rho_2^{- \delta} \left( \log \rho_2 \right)^r$. By our previous improvement on $B_{II}$, we know that $\hat s_{\delta,r} \in \rho_2^{2n - \beta} C_b^\infty ( B_{I})$ for arbitrarily small $\beta >0$, hence, $\tilde s_{\delta,j} \in \rho_2^{2n - \beta'} C_b^\infty (B_{I})$ for $ \beta' := \beta + \delta$. Therefore, choosing $0 < a < 2n -2 - \delta$, we get that 
$$
\tilde s_{\delta,j} \in \rho_2^{a +2} C_b^\infty (B_{I}).
$$
Hence, by Proposition \ref{aclap}, there exists a unique polyhomogeneous $w_{\delta,j} \in \rho_2^a C_b^\infty(B_{I})$ such that 
$$
\Delta_{I} w_{\delta,j} = \tilde s_{\delta,j}. 
$$
Extending the coefficients of the expansion of $w_{\delta,j}$ off $B_{I}$ and setting $\hat u'_{\delta,j} = w_{\delta,j} \hspace{0.03cm} \varepsilon^{\delta +2} (\log \varepsilon)^j$ we get 
\begin{align*}
\Delta \hat u'_{\delta,j} = \Delta \left(w_{\delta,j} \hspace{0.03cm} \varepsilon^{\delta +2} (\log \varepsilon)^j \right) & = \varepsilon^\delta (\log \varepsilon)^j \Delta_I w_{\delta,j} + o(\rho_1^{\delta}) \\ & =  \varepsilon^\delta(\log \varepsilon)^j \tilde s_{\delta,j} + o(\rho_1^\delta). 
\end{align*}
Therefore, for $\hat u'_\delta := \sum_{j=0}^k \hat u'_{\delta, j} = \sum_{j=0}^k w_{\delta,j} \hspace{0.03cm} \varepsilon^{\delta +2} (\log \varepsilon)^j$ we get that
$$
\Delta \hat u'_\delta = \hat s_\delta + o(\rho_1^\delta).
$$
On the other hand, by choosing $a < 2n-2-\delta$ close enough to $2n-2-\delta$, we get that, for all $0<\delta'<\delta$ 
$$
\hat u'_\delta\in \varepsilon^{\delta' +2} \rho_2^a C_b^\infty \implies \hat u'_\delta \in \rho_2^{a+ \delta' +2} \rho_1^2 C_b^\infty \implies \Delta \hat u'_\delta \in \rho_2^{a+ \delta' +2} C_b^\infty = \rho_2^{2n - b} C_b^\infty, 
$$

with $b>0$ as small as we want. In other words, we did not comprimise the improvement on $B_{II}$. Therefore $\hat u_2 + \hat u'_\delta$ is a formal solution for equation (\ref{linpot2}) at order $\delta$ both at $B_{I}$ and $B_{II}$. Similarly, replacing $\hat s - \Delta \hat u_2 $ by $s - \Delta (\hat u_2 + \hat u'_\delta)$, we can iterate the argument to eliminate all terms of order $0 < \delta < 2n -2$. In other words, putting $\hat u_1 := \sum_{0< \delta < 2n -2} \hat u'_\delta $ we can now suppose that
$$
\hat s - \Delta (\hat u_1 + \hat u_2 )  \in O(\varepsilon^{2n -2 - \beta }) \hspace{0.1cm}\text{for }\;\beta>0\;\text{arbitrarily small}.
$$
Writing $\hat s = \varepsilon^{2n -2 - \beta } \tilde s $ and plugging $\hat u = \varepsilon^{2n -2 - \beta } \tilde u$ in equation (\ref{linpot2}) then dividing by $\varepsilon^{2n -2 - \beta }$ on both sides we get 
$$
\Delta \tilde u = \tilde s.
$$
    \end{itemize}
Therefore, we can iterate using the same arguments as before to obtain a formal solution at all orders. In other words, now, we can suppose that 
$$
\hat s - \Delta \hat u_0\in \dot{C}^\infty(\mathcal{X}_b),
$$
and since at every step we are only adding terms of the form $\Delta f$, the integral still vanishes. Finally, for all $k >0$ and $\nu \in (2 -2n,0)$ using Lemma \ref{lmunf}, there exists a unique $\tilde u \in \left(H_{ \nu}^k\left(\mathcal{X}_b\right) \right)' $ such that 
$$
\Delta \tilde u = \hat s - \Delta \hat u_0,
$$
and 
$$
\|\tilde u\|_{H_{\nu}^k} \leq C\left\|\hat s - \Delta \hat u_0\right\|_{H_{\nu -2 }^{k-2}}.
$$
Since $\hat s - \Delta \hat u_0 \in \dot{C}^\infty(\mathcal{X}_b)$, we also get that $\tilde u \in \dot{C}^\infty(\mathcal{X}_b)$ and therefore equation (\ref{linpot2}) is solved for $\hat u := \hat u_0 + \tilde u$ which is polyhomogeneous. Finally, to get back to equation (\ref{linpot}), remember that 
$$
\hat s = \varepsilon^2 s.
$$
Therefore, putting $u : = \frac{\hat u}{\varepsilon^2}$, we get 
$$
\Delta u = s,
$$
and $u$ is polyhomogeneous. Moreover, since $\hat u \in o(\varepsilon^2)$, we get that $u$ vanishes at both $B_{I}$ and $B_{II}$ and this finishes the proof. 

     \end{proof}

\subsubsection{Statement of the theorem}  
 With the same notations and conditions as before, our main theorem is the following.
\begin{tm} \label{main} For $\tilde \varepsilon_0 >0$ sufficiently small, there exists a polyhomogeneous $u \in \mathcal{A}_{phg >0}\left(\mathcal{X}_b \cap \{ \varepsilon \leq \tilde \varepsilon_0 \}\right)$ such that
$$
\frac{\left( \omega_\varepsilon + i \partial \overline{\partial} u_\varepsilon \right)^n}{\omega_\varepsilon} = e^{ v_\varepsilon},
$$
therefore, $\omega_\varepsilon + i \partial \overline{\partial} u_\varepsilon:= \restr{\left( \omega + i \partial \overline{\partial} u \right)}{X_\varepsilon}$ is a Ricci-flat Kähler form on $X_\varepsilon$ for all $0 < \varepsilon \leq \tilde \varepsilon_0$.
\end{tm}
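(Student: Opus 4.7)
The plan is to follow the two-step strategy outlined in the introduction: first solve the complex Monge-Ampère equation formally in the polyhomogeneous category modulo a rapidly vanishing error, then correct the residual by a Banach fixed point argument. Expanding
\[
\frac{(\omega_\varepsilon + i\partial\bar\partial u_\varepsilon)^n}{\omega_\varepsilon^n} = 1 + \Delta_\varepsilon u_\varepsilon + Q_\varepsilon(i\partial\bar\partial u_\varepsilon),
\]
where $Q_\varepsilon$ collects the terms at least quadratic in $i\partial\bar\partial u_\varepsilon$, the equation becomes
\[
\Delta_\varepsilon u_\varepsilon = (e^{v_\varepsilon} - 1) - Q_\varepsilon(i\partial\bar\partial u_\varepsilon).
\]
By Proposition \ref{ricpot}, $e^v - 1$ is polyhomogeneous with strictly positive index sets at both $B_I$ and $B_{II}$, which gives the iteration a well-defined starting point.

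For the formal step, I mimic the iteration used in the proof of Proposition \ref{ricpot}. At each stage I extract the leading term in the polyhomogeneous expansion of the current right-hand side at $B_{II}$ or $B_I$ and solve the corresponding model equation: on $B_{II}$ via the conical Laplacian isomorphism of Propositions \ref{conlap} and \ref{phgconlap}, on $B_I$ via the asymptotically conical Laplacian isomorphism of Proposition \ref{aclap}. The integral compatibility required by Proposition \ref{conlap} is preserved throughout because $\int_{X_\varepsilon} \Delta_\varepsilon u_\varepsilon\, \omega_\varepsilon^n = 0$, so by the Melrose push-forward theorem \ref{fwd} the relevant coefficients on $B_{II}$ automatically integrate to zero, exactly as in the proof of Proposition \ref{ricpot}. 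The quadratic terms $Q_\varepsilon(i\partial\bar\partial u_k)$ produced at each step are of strictly higher order than the error being killed, so alternating between the two faces improves the error order by order. Borel summation for polyhomogeneous expansions on manifolds with corners then produces
\[
u_0 \in \mathcal{A}_{phg>0}(\mathcal{X}_b \cap \{\varepsilon \le \tilde\varepsilon_0\}), \qquad \frac{(\omega_\varepsilon + i\partial\bar\partial u_{0,\varepsilon})^n}{\omega_\varepsilon^n} = e^{v_\varepsilon} - g_\varepsilon,
\]
with $g \in \dot{C}^\infty(\mathcal{X}_b \cap \{\varepsilon \le \tilde\varepsilon_0\})$ vanishing to all orders at $\varepsilon = 0$.

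For the fixed point step, I set $\omega_\varepsilon' := \omega_\varepsilon + i\partial\bar\partial u_{0,\varepsilon}$, which for $\tilde\varepsilon_0$ small is a polyhomogeneous Kähler form satisfying the same structural assumptions as $\omega_\varepsilon$; in particular Lemma \ref{lmunf} applies to its Laplacian with the same kind of uniform constant. Seeking $\tilde u \in \dot{C}^\infty$ solving
\[
\frac{(\omega_\varepsilon' + i\partial\bar\partial \tilde u_\varepsilon)^n}{(\omega_\varepsilon')^n} = 1 + h_\varepsilon, \qquad h_\varepsilon := \frac{g_\varepsilon\, \omega_\varepsilon^n}{(\omega_\varepsilon')^n},
\]
reduces to the fixed point equation $\tilde u = T(\tilde u) := \Delta_{\omega_\varepsilon'}^{-1}\bigl(h_\varepsilon - Q_\varepsilon'(i\partial\bar\partial \tilde u)\bigr)$, whose right-hand side has vanishing fiber integral up to Schwartz corrections so that Lemma \ref{lmunf} can be invoked fiberwise. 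On a small ball in a weighted b-Sobolev space $\rho_1^{N_1}\rho_2^{N_2} H_b^k(\mathcal{X}_b)$ with norm uniform in $\varepsilon$, $T$ is a contraction: uniform invertibility of $\Delta_{\omega_\varepsilon'}$ comes from Lemma \ref{lmunf}, while the quadratic nonlinearity $Q_\varepsilon'$ is controlled using the $(c,\varepsilon)$ scaling of the metric. Since $h$ is Schwartz, the weights $N_1, N_2$ can be taken arbitrarily large, yielding the contraction; standard elliptic bootstrapping applied to $Pf = h$-type equations in the b-calculus then places the unique fixed point $\tilde u$ in $\dot{C}^\infty(\mathcal{X}_b \cap \{\varepsilon \le \tilde\varepsilon_0\})$. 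Setting $u := u_0 + \tilde u \in \mathcal{A}_{phg>0}$ gives the Ricci-flat Kähler form and completes the proof.

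The main obstacle is the $\varepsilon$-uniformity of the fixed point argument: the weighted b-Sobolev norms, the quadratic nonlinearity $Q_\varepsilon'(i\partial\bar\partial \tilde u)$, and the inverse Laplacian all depend on the degenerating family $\omega_\varepsilon'$, and the conformal factor $\rho_1^2$ relating the geometric metric to the b-surgery metric (see Remark \ref{remden}) must be tracked throughout so that the contraction constant does not blow up as $\varepsilon \to 0$. The careful choice of weights $N_1, N_2$ compatible with the indicial roots appearing in Propositions \ref{conlap} and \ref{aclap}, together with the Schwartz decay of $h$, is what ultimately closes the argument.
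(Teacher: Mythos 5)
Your proposal follows essentially the same route as the paper: the same linearization of the Monge--Ampère operator, the same face-by-face formal iteration modeled on the proof of Proposition \ref{ricpot} (with the integral compatibility supplied by the push-forward theorem and the nonlinear terms absorbed as higher-order errors), and the same contraction-mapping correction of the Schwartz remainder using the uniform invertibility of Lemma \ref{lmunf} on weighted Sobolev spaces. The only cosmetic difference is at the very end: where you invoke elliptic bootstrapping to place the fixed point in $\dot{C}^{\infty}$, the paper instead runs the contraction for each admissible weight and uses uniqueness of the normalized solution of the complex Monge--Ampère equation on each fiber to conclude that the corrections for different weights coincide, hence vanish to all orders in $\varepsilon$.
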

The proof of this theorem will be carried out in Sections \ref{secform} and \ref{secfix}. First, we will start by constructing our main examples for which this theorem applies. 
\subsection{Main examples} \label{main examples}
In all of the following, we let $(X_0, \omega_{CY})$ be a conical Calabi-Yau manifold with an isolated singular point $x \in X_0 $ modeled on a Calabi-Yau cone $(C, \omega_C)$ with vertex $o$ in the sense of Definition \ref{def : conical CY} and such that $\omega_{CY} \in [\omega] \in H^{1,1}(X_0^{reg}, \R)$ for a smoothly Kähler form $\omega$. As before, we suppose that $X_0$ has only one singular point only to keep notations light.
\subsubsection{Crepant resolution} \label{crepant resolution}

Suppose that $\hat \pi_{\hat C} : \hat{C} \rightarrow C$ is a crepant resolution of $C$, denote $E := \hat{\pi}^{-1} ( \{ o\})$. Using the local biholomorphism $x \in U_0 \cong \{p \in C; \hspace{0.1cm} r(p) < \frac{2}{\varepsilon} \} $, consider the holomorphic gluing $$\hat X_\varepsilon := \left((X_0 \setminus \{ x\}) \cup  (\hat{C} \setminus Y_\varepsilon) \right) \Big/ \sim,$$ where $Y_\varepsilon := \{p \in C; \hspace{0.1cm} r(p) \geq \frac{1}{\varepsilon} \}$ and $\sim$ is the equivalence relation identifying the images, under the biholomorphic identifications, of $\{p \in C; \hspace{0.1cm} r(p) < \frac{1}{\varepsilon} \} \setminus \{ o\}$ inside $U_{ 0} \setminus \{ x\}$ and $\hat C \setminus (E \cup Y_{ \varepsilon})$ respectively. Denote $\hat X$ the common underlying complex manifold. Consider the natural map $\hat \pi : \hat X \rightarrow X_0$ which is given by $\pi_{\hat C}$ near $E$ and by the identity elsewhere. Therefore, $\hat \pi^{*}K_{X_0} = K_{\hat X}$. Let $\omega_{AC}$ be an asymptotically conical Calabi-Yau metric on $\hat C$ as provided by Goto \cite{goto_calabi-yau_2012} and suppose that $\omega_{AC}$ satisfies Assumption \hyperlink{R}{R}.\\
Now, let $\mathcal{M}$ be the parametric space 
    $$
    \mathcal{M} := \hat X \times [0,1)_\varepsilon.
    $$
    The space $\mathcal{M}$ is locally diffeomorphic near $E \times \{ 0\}_\varepsilon$ to the local model 
    $$
    \hat{\mathcal{C}} := \hat C \times [0,1)_\varepsilon.
    $$

In the following, we describe how to construct the weighted blow-up in the sense of Melrose and the family of Kähler metrics on $\hat{X}$ to be able to apply Theorem \ref{main}.\\

\textbf{Step 1: Local resolution}\\
We will start by constructing the local model of the resolution using the cone $C$. For such a construction in the particular case of orbifold singularities, see \cite[Section 5]{najafpour2024constant}.\\

First, by theorem \ref{convar}, we know that the cone $C$ can be seen as a normal variety in $\C^N$ with the vertex identified with the origin $0 \in \C^n$ and the scaling action on $C$ generated by $r \partial_r$, where $r$ is the radial function associated to the Ricci-flat Kähler cone metric $\omega_C$,  extends to a diagonal $\R_{>0}$-action on $\C^N$ given by $(\lambda, z_1, z_2, \cdots, z_N) \to (\lambda^{w_1} z_1, \lambda^{w_2} z_2, \cdots, \lambda^{w_N} z_N)$ for $w_i >0$. Suppose $C$ is given by 
$$
C = \left\{z \in \C^N; \hspace{0.1cm} P_i(z)=0, \hspace{0.1cm} i =1,2, \cdots, m \right\}, 
$$
for some polynomials $P_i$, homogeneous with respect to the weighted diagonal $\R_{>0}$-action of some real positive degree $d_i:= \deg P_i$.\\

Using the identification $\C^N \cong \R^{2N}$, consider the weighted Melrose blow-up $[\C^N \times [0,1)_\varepsilon ; \{0\}]_{\tilde w}$ in the sense of definition \ref{weighted blow-up} where the weight is given by $\tilde w := (w_1, w_1, w_2, w_2, \cdots, w_N, w_N, 1 ) \in (\R_{>0})^{2N +1}$ and let $$\beta : [\C^N \times [0,1)_\varepsilon ; \{0\}]_{\tilde w} \rightarrow \C^N \times [0,1)_\varepsilon$$ be the blow-down map. Now, denote $\mathcal{C}_b$ the lift of $C \times [0,1)_\varepsilon$ under the blow-down map i.e.
$$
\mathcal{C}_b := \overline{\beta^{-1}\left((C  \times [0,1)_\varepsilon) \setminus \{ o\} \times \{0 \} \right)}. 
$$

Following remark \ref{projective coordinates}, by considering the projective coordinates given by $\tilde z_j := \varepsilon^{- w_j} z_j$, for $j =1,2, \cdots, N$, away from $\varepsilon=0$, we get that for $(z, \varepsilon) \in C \times (0,1)_\varepsilon$, we have
$$ P_i(\tilde z) = P_i(\varepsilon^{-1} \cdot \tilde z) = \varepsilon^{-d_i} P_i(\tilde z) =0.$$
This means that, 
$$\mathcal{C}_b \setminus \overline{\beta^{-1}((C \setminus \{ o\}) \times \{ 0\}_\varepsilon)} \cong \left\{[ \tilde z : \varepsilon : 1] \in [\C^N \times [0,1)_\varepsilon ; \{0\}]_{\tilde w} ; \hspace{0.1cm} P_i(\tilde z) =0, \hspace{0.1cm} i =1,2, \cdots, m \right\}.$$

Therefore, $\mathcal{C}_b$ has the structure of a \textbf{conifold with corners}, by which we mean that $\mathcal{C}_b$ still has cone singularities along $\left\{ [ 0 : \varepsilon :1] \in[\C^N \times [0,1)_\varepsilon ; \{0\}]_{\tilde w} \right\} \cap \mathcal{C}_b$, but away from these singularities $\mathcal{C}_b$ has the structure of a manifold with corners with two boundary hypersurfaces at $\varepsilon=0$: 
\begin{itemize}
    \item 
$B_I:= \mathcal{C}_b \cap \beta^{-1} ( \{ o\})$ is the new face coming from the blow-up of the origin and, by the projective coordinates considered above, it identifies on its interior with the cone $C$ itself under coordinates $\tilde z$;
\item $B_{II} := \overline{\beta^{-1}((C \setminus \{ o\}) \times \{ 0\}_\varepsilon)}$ is the lift of the original boundary $C \times \{ 0\}_\varepsilon$ and it corresponds to a compactification of the cone by its link using the radial function $r$ of the metric $\omega_C$.    
\end{itemize}
Now, consider the crepant resolution $\hat \pi_{\hat C} : \hat{C} \rightarrow C$. Naturally, $\hat \pi_{\hat C}$ induces a resolution on $\mathcal{C}_b$ given by
$$
\hat{\pi}_b : \hat{\mathcal{C}}_b \rightarrow \mathcal{C}_b,
$$
where $\hat{\mathcal{C}}_b$ resolves the singularities along $\left\{ [0 : \varepsilon :1] \in[\C^N \times [0,1)_\varepsilon ; \{0\}]_{\tilde w} \right\} \cap \mathcal{C}_b$. In other words, $\hat{\mathcal{C}}_b$ is a manifold with corners with fibers above $\varepsilon >0$ identified with $\hat{C}$ and two boundary hypersurfaces at $\varepsilon = 0 $:
\begin{itemize}
    \item 
$\hat{B}_I$ corresponds to a radial compactification of the crepant resolution $\hat{C}$, i.e. $B_I \cong \overline{\hat{C}}$,
\item $\hat{B}_{II}$ is the same as $B_{II}$. 
\end{itemize}
Moreover, if we set $\rho_1 := \sqrt{r(z)^2 + \varepsilon^2}$ and $\rho_2 := \frac{\varepsilon}{\sqrt{r(z)^2 + \varepsilon^2}}$, where $r$ is the radial function of $\omega_C$, then $\rho_1$ is a boundary defining function for $B_I$ and $\rho_2$ is a boundary defining function for $B_{II}$ such that $\rho_1 \rho_2 = \varepsilon$ and 
\begin{itemize}
    \item $\restr{\rho_1}{B_{II}} = r(z)$;
    \item $\restr{\rho_2}{B_I} = \frac{1}{ r(\tilde z) \sqrt{1 + \frac{1}{r(\tilde z)^2}}} = \frac{1}{r(\tilde z)} + o\left(\frac{1}{r(\tilde z)} \right)$.
\end{itemize}
Here, we have omitted the $\beta^{*}$ to keep notations light.\\

Notice also  that, using the projective coordinates from above and remark \ref{projective coordinates}, the complex structure is resolved on $\mathcal{C}_b$ in the sense that 
$$
J \in C^\infty \left( \hat{\mathcal{C}_b} ; \mathrm{End}\left(\hspace{0.01cm} ^{c,\varepsilon} T^{*} \hat{\mathcal{C}}_b \right)  \right) \hspace{0.7cm} \text{with} \hspace{0.7cm} J^2 = -Id.
$$

\textbf{Step 2: Global resolution}\\

Now, since $\mathcal{M}$ is locally diffeomorphic to $\hat C \times [0,1)_\varepsilon$ near $E \times \{ 0\}_\varepsilon$, we can glue the model resolution $\hat{\mathcal{C}}_b$ near $E \times \{0\}_\varepsilon$ to get a new space ${\mathcal{M}}_b$ and we still denote the blow-down map by
$$
\beta : \mathcal{M}_b \rightarrow \mathcal{M}.
$$
Therefore, $\mathcal{M}_b$ is a manifold with corners and together with the map $ p_b : = p_\varepsilon \circ \beta$, where $p_\varepsilon$ is the projection into the second factor, we get a surgery space $(\mathcal{M}_b, p_b)$ in the sense of Definition \ref{srg} such that 
\begin{itemize}
    \item The fibers over $\varepsilon >0$ are identified with the crepant resolution $\hat{X}$.
    \item The boundary hypersurface $B_I$ is identified with $\overline{\hat{C}}$, the compactification of $\hat C$ by the link of the cone $C$.
    \item The boundary hypersurface $B_{II}$ is identified with $\overline{X_0}$ the compactification of $X_0 \setminus \{ x\}$ by the link of the cone $C$ using the radial function $r$.
\end{itemize}
In addition, we can extend the previously chosen boundary defining functions $\rho_1$ and $\rho_2$ to the rest of $\mathcal{M}_b$ such that $\varepsilon = \rho_1 \rho_2 $. Also, since the gluing is done locally, we still get that
$$
J \in C^\infty \left( \mathcal{M}_b; \mathrm{End}\left(\hspace{0.01cm} ^{c,\varepsilon} T^{*} \mathcal{M}_b \right)  \right)\hspace{0.7cm} \text{with} \hspace{0.7cm} J^2 = -Id.
$$

\textbf{Step 3: Construction of the Kähler metric}\\

Now, we would like to construct a polyhomogeneous family of metrics on the fibers of $\mathcal{M}_b$ which satisfies the conditions of Theorem \ref{main}.\\

On the one hand, the conical Ricci-flat Kähler metric $\omega_{CY}$ on $X_0$ naturally lifts to $B_{II}$ and by our choice of boundary defining functions, $\omega_{CY}$ is given in a neighborhood of $\partial B_{II}$, by 
$$
\omega_{CY} = \frac{i}{2} \partial \overline{\partial} \left( \rho_1^2 + f_2 \right),
$$
where $f_2 \in \rho_1^{2+ \lambda_2} C_b^\infty(B_{II})$, for $\lambda_2>0$. Moreover, $f_2$ is polyhomogeneous by Theorem \ref{tmA}.\\

On the other hand, if $\omega_{AC}$ is an asymptotically conical Ricci-flat Kähler metric on $\hat C$, then by Goto's construction \cite[Theorem 5.1, Lemma 5.6, Lemma 5.7]{goto_calabi-yau_2012} (see also \cite[Section 4.2]{conlon2013asymptotically}), we have 
$$
\restr{\omega_{AC}}{\hat C \setminus E} = \mathrm{pr}^{*} \xi + \frac{i}{2} \partial \overline{\partial} \left( \rho_2^{-2} + f_1 \right),
$$
where $\mathrm{pr} : C \rightarrow L$ is the radial projection from the cone $C$ on its link $L$, $\xi$ is a primitive basic harmonic $(1,1)$-form on $L$ that represents the restriction of the class $[\omega_{AC}]$ to $L$, $f_1 \in \rho_2^{-2+ \lambda_1} C_b^\infty(B_{I})$, for $\lambda_1>0$ and, by Theorem \ref{acphg}, $f_1$ is polyhomogeneous.\\

\begin{lm} \label{lemma form}
    Using the identification $U_0 \cong \{ p \in C; \hspace{0.1cm} r(p) < \frac{2}{\lambda}\}$ for a given $\lambda >0$, denote by $\tilde U_0$ the region inside $U_0$ identified with $\{p \in C; \hspace{0.1cm}  \frac{ 1}{ 2\lambda} < r(p)\leq \frac{3}{ 4\lambda} \} \cong Y_{2 \lambda} \setminus Y_{\frac{4\lambda}{3}}$. If $\omega_{AC}$ satisfies Assumption \hyperlink{R}{R} for such $\lambda>0$, then there exists a closed $(1,1)$-form $\tilde \eta$ on $X_0$ such that $\restr{\eta}{\tilde U_0} = \mathrm{pr}^* \xi$.
\end{lm}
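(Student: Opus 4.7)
The plan is to modify a closed $(1,1)$-form provided by Assumption \hyperlink{R}{R} by a $\partial\overline{\partial}$-exact term supported in an annular region around $\tilde U_0$, so that the result equals $\mathrm{pr}^{*}\xi$ on $\tilde U_0$ itself. By Assumption \hyperlink{R}{R} I can pick a closed real $(1,1)$-form $\omega_1$ on $\hat X_\lambda$ whose restriction to $\hat C \setminus Y_\lambda$ is cohomologous to $\omega_{AC}|_{\hat C \setminus Y_\lambda}$. Since $\hat C \setminus Y_\lambda$ is a Kähler open set deformation retracting onto the compact exceptional set $E \subset \hat C$, an appropriate $\partial\overline{\partial}$-lemma yields a smooth real function $k$ with $\omega_1 - \omega_{AC} = i\partial\overline{\partial} k$ there. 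Combined with the asymptotically conical identity $\omega_{AC} - \mathrm{pr}^{*}\xi = \tfrac{i}{2}\partial\overline{\partial} g$ on $\hat C \setminus E$ (where $g = \rho_2^{-2} + f_1$ from Goto's construction), this gives on the overlap annulus $\{0 < r < 1/\lambda\}$ containing $\tilde U_0$ the identity
\[
\omega_1 - \mathrm{pr}^{*}\xi \;=\; i\partial\overline{\partial}\bigl( k + \tfrac{g}{2} \bigr) \;=:\; i\partial\overline{\partial} h.
\]

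I then choose a smooth cutoff $\chi$ on $X_0$ supported in an annulus $\{r_0 < r < r_1\} \subset U_0 \setminus \{x\}$ with $r_0 < 1/(2\lambda)$ and $r_1 > 3/(4\lambda)$, satisfying $\chi \equiv 1$ on $\tilde U_0$ and $\chi \equiv 0$ in a neighborhood of $x$. Using the biholomorphism $X_0 \setminus \{x\} \cong \hat X_\lambda \setminus E$, I set
\[
\tilde\eta \;:=\; \omega_1\big|_{X_0 \setminus \{x\}} \;-\; i\partial\overline{\partial}(\chi h).
\]
This is manifestly a closed $(1,1)$-form on $X_0 \setminus \{x\}$; on $\tilde U_0$, where $\chi = 1$, it equals $\omega_1 - i\partial\overline{\partial} h = \mathrm{pr}^{*}\xi$; and since $\chi h$ is compactly supported away from $x$, its pullback $\hat\pi^{*}\tilde\eta$ extends smoothly across $E$ to the closed $(1,1)$-form $\omega_1 - i\partial\overline{\partial}(\hat\pi^{*}(\chi h))$ on $\hat X$. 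This is the sense in which $\tilde\eta$ defines a globally admissible form on $X_0$, well-suited to the subsequent gluing construction.

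The hard part will be justifying the $\partial\overline{\partial}$-lemma used to produce $k$, since $\hat C \setminus Y_\lambda$ is non-compact. If a direct global appeal proves uncomfortable, one can localize to an annular region $A \subset \hat C \setminus E$ containing $\tilde U_0$, homotopy equivalent to the compact Sasakian link $L$, and invoke the basic $\partial\overline{\partial}$-lemma on $L$ (via its transverse Kähler structure) to produce $h$ directly on $A$; the cutoff argument then goes through unchanged. It is precisely this $i\partial\overline{\partial}$-exact factorization, which is exactly what the cohomological hypothesis in Assumption \hyperlink{R}{R} ensures, that allows the cutoff construction to yield a genuine $(1,1)$-form rather than one with spurious $(2,0)$ and $(0,2)$ components.
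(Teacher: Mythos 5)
Your construction is essentially the paper's: both take the closed $(1,1)$-form supplied by Assumption \hyperlink{R}{R}, write its difference with $\mathrm{pr}^{*}\xi$ as $i\partial\overline{\partial}$ of a potential on an annular region of the punctured cone neighborhood, and interpolate with a cutoff (your $\chi$ is just $1-\phi$ for the paper's $\phi$, and your $\tilde\eta=\omega_1-i\partial\overline{\partial}(\chi h)$ agrees with the paper's two-piece definition). The one soft spot, which you correctly identify as the crux, is the $\partial\overline{\partial}$-lemma: the paper works on $(\hat C\setminus Y_\lambda)\setminus E\cong(C\setminus\{o\})\setminus Y_\lambda$ and invokes the arguments of \cite[Appendix A]{conlon2013asymptotically} and \cite[Lemmas 5.5, 5.6]{goto_calabi-yau_2012}, which rest on $C\setminus Y_\lambda$ being Stein with trivial canonical bundle; your fallback via the basic $\partial\overline{\partial}$-lemma on the link does not directly apply, since the exact $(1,1)$-form on the annulus need not be basic, so you should cite the Stein-variety argument instead.
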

\begin{proof}
Assumption \hyperlink{R}{R} implies that there exists $\lambda>0$ and a $(1,1)$-form $ \eta$ on $\hat X_\lambda \cong \hat X$ such that $[ \restr{\omega_{AC}}{\hat C \setminus Y_\lambda} ] = [ \restr{\eta}{\hat C \setminus Y_\lambda} ] \in H^{1,1}(\hat C \setminus Y_\lambda, \R)$. Therefore the restriction of $\eta$ on $(\hat C \setminus Y_\lambda) \setminus E$ has the form $\restr{\eta}{(\hat C \setminus Y_\lambda) \setminus E} = \mathrm{pr}^* \xi + d \theta$, for a certain real $1$-form $\theta$. Therefore $d \theta$ is a real exact $(1,1)$-form on $(\hat C \setminus Y_\lambda) \setminus E \cong (C \setminus \{ o\}) \setminus Y_\lambda$. Since $C  \setminus Y_\lambda $ is a Stein variety with trivial canonical bundle and $\hat C \setminus Y_\lambda$ is a crepant resolution, we may apply the same arguments as in \cite[Appendix A]{conlon2013asymptotically} and in \cite[Lemma 5.5, Lemma 5.6]{goto_calabi-yau_2012} to deduce that $d \theta = i \partial \overline{\partial}v$ for some $v$ smooth on $(C \setminus \{ o\}) \setminus Y_\lambda $. Let $\phi$ be a cutoff function $\phi :(C \setminus \{ o\}) \setminus Y_\lambda \rightarrow [0,1]$ such that $\phi =0$ on $Y_{2 \lambda} \setminus Y_{\frac{4 \lambda}{3}}$ and $\phi =1$ on $(C \setminus \{ o\}) \setminus Y_{3\lambda}$ and on $Y_{\frac{5 \lambda}{4}} \setminus Y_{\lambda}$. Let $\tilde \eta$ be a $(1,1)$-form on $X_0$ given by $\mathrm{pr}^{*} \xi + i \partial \overline{\partial}(\phi v)$ on $U_0 \setminus Y_{\frac{5 \lambda}{4}}$ and by $\eta$ elsewhere. Therefore, $\tilde \eta$ is a well defined closed $(1,1)$-form on $X_0$ and $\restr{\tilde \eta}{\tilde U_0} = \mathrm{pr}^{*} \xi$.

 \end{proof}

We are now ready to perform our gluing construction. Our gluing construction differs slightly from the usual gluing constructions in the literature because we need to make sure that we glue in a polyhomogeneous way near the corners.

\begin{pr} \label{glue}
    There exists $\varepsilon_0>0$ and $\omega \in \mathcal{A}_{phg \geq 0} \left( \hspace{0.01cm} ^{c,\varepsilon} T^{*} \mathcal{M}_b \wedge \hspace{0.01cm} ^{c,\varepsilon} T^{*} \mathcal{M}_b \right)$ polyhomogeneous such that 
    \begin{itemize}
        \item $\omega_\varepsilon$ is a Kähler form on $\hat X$ for all $0< \varepsilon < \varepsilon_0 $. 
        \item $\restr{\frac{\omega}{\varepsilon^2}}{B_I} = \omega_{AC}$.
        \item $\restr{\omega}{B_{II}} = \omega_C$.
    \end{itemize}
\end{pr}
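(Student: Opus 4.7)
The plan is to exploit the fundamental algebraic identity $\varepsilon^{2}\rho_{2}^{-2}=\rho_{1}^{2}$ forced by $\rho_{1}\rho_{2}=\varepsilon$. Under this identity, the leading Kähler potentials of $\omega_{CY}$ (namely $\tfrac{1}{2}\rho_{1}^{2}$, from $\omega_{CY}=\tfrac{i}{2}\partial\bar{\partial}(\rho_{1}^{2}+f_{2})$) and of the rescaled asymptotically conical metric $\varepsilon^{2}\omega_{AC}$ (namely $\tfrac{1}{2}\varepsilon^{2}\rho_{2}^{-2}=\tfrac{1}{2}\rho_{1}^{2}$, from $\omega_{AC}=\mathrm{pr}^{*}\xi+\tfrac{i}{2}\partial\bar{\partial}(\rho_{2}^{-2}+f_{1})$) coincide identically on the interior of $\mathcal{M}_{b}$. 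This reduces the gluing problem to patching two polyhomogeneous remainder potentials, $f_{2}$ on the $B_{II}$-side and $\varepsilon^{2}f_{1}$ on the $B_{I}$-side, together with one topologically nontrivial term $\varepsilon^{2}\mathrm{pr}^{*}\xi$ coming from the non-exactness of $\omega_{AC}$.

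Applying Lemma~\ref{lemma form}, I would first promote $\mathrm{pr}^{*}\xi$ to a globally defined closed $(1,1)$-form $\tilde{\eta}$ on $X_{0}$ (hence on $\hat{X}$ by pullback via $\hat{\pi}$), absorbing the local discrepancy $\omega_{AC}-\tilde{\eta}=i\partial\bar{\partial}v$ near $E$ into a redefined remainder $\tilde{f}_{1}:=f_{1}-2v$, which remains polyhomogeneous. The lift of $\tilde{\eta}$ to $\mathcal{M}_{b}$ is polyhomogeneous: it is smooth and cone-scale-invariant on the interior, and its $\rho_{1}^{-2}$ singular behavior at $B_{I}$ (due to $\mathrm{pr}^{*}\xi$ being of degree $0$ for the radial scaling) is compensated by the explicit factor $\varepsilon^{2}=\rho_{1}^{2}\rho_{2}^{2}$ attached to it below.

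Next, I would pick a smooth cutoff $\chi\in C^{\infty}(\mathcal{M}_{b};[0,1])$ with $\chi\equiv 1$ on a collar of $B_{I}$ and $\chi\equiv 0$ on a collar of $B_{II}$ (depending only on the angular coordinate of the blow-up, so that it is smooth on $\mathcal{M}_{b}$), and define
$$
\omega \;:=\; \chi\,\varepsilon^{2}\,\tilde{\eta} \;+\; \tfrac{i}{2}\partial\bar{\partial}\bigl(\rho_{1}^{2}\;+\;(1-\chi)f_{2}\;+\;\chi\,\varepsilon^{2}\tilde{f}_{1}\bigr),
$$
extending by $\omega_{CY}$ on the $B_{II}$-side and by $\varepsilon^{2}\omega_{AC}$ on the $B_{I}$-side where the cutoff is locally constant. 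A direct computation using $\chi\equiv 0$ near $B_{II}$ and $\chi\equiv 1$ near $B_{I}$ shows $\restr{\omega}{B_{II}}=\omega_{CY}$ and $\restr{\omega/\varepsilon^{2}}{B_{I}}=\omega_{AC}$. Polyhomogeneity as a $(c,\varepsilon)$-form with non-negative index set follows because $\chi$, $f_{1}$, $f_{2}$, and $\tilde{\eta}$ are all polyhomogeneous on $\mathcal{M}_{b}$, because $\partial\bar{\partial}$ preserves polyhomogeneity, and because the factor $\varepsilon^{2}$ exactly cancels the $\rho_{1}^{-2}$ behavior of $\tilde{\eta}$ at $B_{I}$.

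The remaining point, which I expect to be the main technical obstacle, is positivity of $\omega_{\varepsilon}$ on each fiber $\hat{X}$ uniformly for small $\varepsilon$. Outside the transition annulus $\{0<\chi<1\}$ positivity is automatic. Inside the annulus, the $(c,\varepsilon)$-leading term of $\omega$ is $\tfrac{i}{2}\partial\bar{\partial}\rho_{1}^{2}$, which agrees with the Ricci-flat Kähler cone form $\omega_{C}$ to top order; all remaining contributions — derivatives of $\chi$ paired with $f_{2}$ or $\varepsilon^{2}\tilde{f}_{1}$, as well as $\chi\varepsilon^{2}\tilde{\eta}$ — carry strictly positive weights in $\rho_{1}$, $\rho_{2}$, or $\varepsilon$ (since $f_{2}\in\rho_{1}^{2+\lambda_{2}}C_{b}^{\infty}$ and $f_{1}\in\rho_{2}^{-2+\lambda_{1}}C_{b}^{\infty}$) and are therefore uniformly small measured against $\omega_{C}$. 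Choosing $\varepsilon_{0}$ small thus makes $\omega_{\varepsilon}$ a small uniform perturbation of the Kähler cone metric, so it remains positive, completing the construction.
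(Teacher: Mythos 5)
Your overall strategy --- matching the leading potentials via $\varepsilon^{2}\rho_{2}^{-2}=\rho_{1}^{2}$, invoking Lemma \ref{lemma form} for the non-exact piece, extending $f_1,f_2$ polyhomogeneously, and checking positivity on the two boundary faces --- is the same as the paper's, but the interpolation step has a genuine gap. First, a function with $\chi\equiv 1$ on a collar of $B_I$ and $\chi\equiv 0$ on a collar of $B_{II}$ does not exist, since $B_I\cap B_{II}\neq\emptyset$. If instead, as your parenthetical suggests, $\chi$ depends only on the angular variable of the blow-up, then the transition set $\{0<\chi<1\}$ meets $B_I$ in a nonempty open annulus, and there $\restr{\tfrac{\omega}{\varepsilon^{2}}}{B_I}=\omega_{AC}+(\chi-1)\left(\tilde{\eta}+\tfrac{i}{2}\partial\bar{\partial}\tilde{f}_{1}\right)+\cdots$, which is not $\omega_{AC}$ because $(\chi-1)\,\mathrm{pr}^{*}\xi$ does not vanish on a fixed annulus of $\hat C$. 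Since $B_I\cup B_{II}$ is connected through the corner, any placement of a single transition region produces the same failure on one face or the other. Second, and independently, the term $\chi\,\varepsilon^{2}\tilde{\eta}$ destroys closedness: $d\omega=\varepsilon^{2}\,d\chi\wedge\tilde{\eta}\neq 0$ wherever $d\chi\neq 0$, because $\mathrm{pr}^{*}\xi$ is cohomologically nontrivial on the gluing annulus; so your $\omega_\varepsilon$ is not a K\"ahler form at all. The entire purpose of Lemma \ref{lemma form} is to produce a globally defined \emph{closed} form agreeing with $\mathrm{pr}^{*}\xi$ on the annulus precisely so that no cutoff ever multiplies it.

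The repair, which is what the paper does, is to use two cutoffs $\psi_1,\psi_2$ that are both identically $1$ in a full neighborhood of the corner, so that near the corner the potential is $\tfrac{1}{2}\rho_1^2+f_2+\varepsilon^2 f_1$ with \emph{both} correction terms present. This is harmless for the boundary restrictions because of the cross-vanishing $\varepsilon^{2}f_{1}\in\rho_{1}^{2}\rho_{2}^{\lambda_{1}}C_{b}^{\infty}$ (so it restricts to zero on $B_{II}$) and $\varepsilon^{-2}f_{2}\in\rho_{2}^{-2}\rho_{1}^{\lambda_{2}}C_{b}^{\infty}$ (so it restricts to zero on $B_I$). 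Each correction is then switched off only away from the corner, in a collar of the interior of the opposite face, where it no longer affects either restriction; and the cutoffs multiply only the exact parts, leaving $\varepsilon^{2}\tilde{\eta}$ untouched, so closedness is preserved.
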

\begin{proof}
Let us consider $3$ open regions near the boundary in $\mathcal{M}_b$
\begin{itemize}
    \item $V_0$ a small neighborhood of the corner $\partial B_I= \partial B_{II}$ in $\mathcal{M}_b$ such that $V_0 \cap B_{II} = U_0$ and denote $U_1 := V_0 \cap B_{I}$. We also suppose that $V_0$ has a product structure $V_0 \cong (B_I \cap B_{II}) \times [0,1)_{\rho_1} \times  [0,1)_{\rho_2}$,
    \item $V_1$ a collar neighborhood of $\overline{(B_{I} \setminus U_1)}$ away from the corner,
    \item $V_2$ a collar neighborhood of $\overline{(B_{II} \setminus U_0)}$ away from the corner such that $V_2 \cap B_{II} = \tilde U_0$ as defined in Lemma \ref{lemma form} and $V_1 \cap V_2 = \emptyset$.
\end{itemize} 
Finally, we set $W:= V_0 \cup V_1 \cup V_2$. Up to changing the boundary defining functions away from the corner, we can suppose that $\rho_1$ is constant along the fibers on $V_1 \cap V_0$ and similarly for $\rho_2$ on $V_2 \cap V_0$. Notice that since $V_1 \cap V_2 = \emptyset$, we can still arrange for $\rho_1$ and $\rho_2$ to satisfy $\rho_1 \rho_2 = \varepsilon$.\\
 
 Now, since $ f_1$ and $f_2$ are polyhomogeneous on $B_{I}$ and $B_{II}$ respectively and using the product identification $V_0 \cong (B_I \cap B_{II}) \times [0,1)_{\rho_1} \times  [0,1)_{\rho_2}$,
 we can consider extensions of the coefficients in the expansions of $f_1$ and $f_2$ on $V_0$ such that the extensions are trivial on $V_1 \cap V_0$ and $V_2 \cap V_0$ i.e. independent of $\rho_1$ and $\rho_2$. Hence using such extensions and the boundary defining functions $\rho_1$ and $\rho_2$ allows to extend $ f_1$ and $f_2$ in a polyhomogeneous way on $V_0$. In particular, with such extension, we get that
 $$
 f_1\in \rho_2^{-2+ \lambda_1} C_b^\infty(V_0) \implies  \varepsilon^2 f_1 \in \varepsilon^2 \rho_2^{-2 + \lambda_1} C_b^\infty( V_0) \implies \varepsilon^2 f_1 \in \rho_1^{2} \rho_2^{\lambda_1} C_b^\infty( V_0). 
 $$   
This implies, in particular, that $\restr{\varepsilon^2 f_1}{B_{II}} =0$. Similarly, we have 
$$
f_2 \in \rho_1^{2+ \lambda_2} C_b^\infty(V_0) \implies \varepsilon^{-2} f_2 \in \rho_2^{-2} \rho_1^{\lambda_2} C_b^\infty(V_0),
$$
therefore, $\restr{\varepsilon^{-2} f_2}{B_{I}}=0$. \\

Now, consider two smooth functions $\psi_i : W \rightarrow [0,1]$, for $i \in \{1,2\}$, such that
\begin{itemize}
    \item $\restr{\psi_1}{V_2 \setminus V_0} =0$ and $\restr{\psi_1}{W \setminus V_2}=1$.
    \item $\restr{\psi_2}{V_1 \setminus V_0} =0$ and $\restr{\psi_2}{W \setminus V_1}=1$.
\end{itemize}
In particular, $\psi_1$ and $\psi_2$ are constant and equal to $1$ near the corner. Therefore, we can define $\omega_\varepsilon$ on $W$ as the following

$$
 \omega_\varepsilon := 
\begin{cases}
   \varepsilon^2 \tilde \eta + \omega_C & \text{on $V_2 \setminus V_0$}\\
   \varepsilon^2 \mathrm{pr}^* \xi + i \partial \overline{\partial} \left( \frac{\rho_1^2}{2} +  \psi_2 \cdot f_2 + \psi_1 \cdot \varepsilon^2 f_1 \right) & \text{on $V_0$}\\
   \varepsilon^2 \omega_{AC} & \text{on $V_1 \setminus V_0$},
\end{cases}
$$
where $\tilde \eta$ is from Lemma \ref{lemma form}. Therefore, by construction, $\omega_\varepsilon$ is a well defined closed $(1,1)$-form, it is polyhomogeneous on $W$ and using the fact that $\varepsilon = \rho_1 \rho_2$ and the restrictions of $\varepsilon^2 f_1$ and $\varepsilon^{-2}f_2$ from above, we get $\restr{\omega}{B_{II}} = \omega_C$ and $\restr{\frac{\omega}{\varepsilon^2}}{B_I}= \omega_{AC}$. In particular, $g_\varepsilon := \omega_\varepsilon (., J.) \in \mathcal{A}_{phg}(W, Sym^2(\hspace{0.01cm} ^{c,\varepsilon} T^{*} W))$. Since $g_\varepsilon$ is positive definite as a section of $Sym^2(\hspace{0.01cm} ^{c,\varepsilon} T^{*} W)$ on $B_{I}$ as well as on $B_{II}$ (namely, $\frac{g_\varepsilon}{\rho_1^2}$ is positive definite as a section of $Sym^2(\hspace{0.01cm} ^{b,\varepsilon} T^{*} W)$), we get that $g_\varepsilon$ remains positive definite for $\varepsilon$ sufficiently small.
\end{proof}
Therefore, Theorem \ref{main} applies and we get a polyhomogeneous family of Ricci-flat Kähler metrics on the crepant resolution proving Theorem \ref{tmB}.

\subsubsection{Polarized smoothing} \label{polarized smoothing}
Now, suppose, in addition, that $X_0$ is a projective variety with canonical singularity at $x$ and $L_0$ is an ample line bundle such that $\omega_{CY} \in 2 \pi c_1(L_0)$. Suppose $(X_0, L_0)$ is smoothable and let $\pi : (\mathcal{X}, \mathcal{L}) \rightarrow \mathbb{D}$ be a polarized smoothing of $(X_0, L_0)$ satisfying assumptions \hyperlink{S.1}{S.1} and \hyperlink{S.2}{S.2}.\\

\textbf{Step 1: Local resolution}\\

Assumption \hyperlink{S.1}{S.1} translates to the following: There exists an affine smoothing of the cone $C$ given by $p : W \rightarrow \C$ and an isomorphism of germs of complex-analytic spaces $H : (\mathbb{D}, 0) \rightarrow  (\C,0)$ such that $(\mathcal{X}, x)$ and $(W,0)$ are isomorphic as germs of deformations of $C$ under a map  $I : (\mathcal{X}, x) \rightarrow  H^*(W,0)$. Moreover, $W$ is given by 
$$
W := \left\{ (z,t) \in \C^N \times \C_t ; \hspace{0.1cm} P_i(z) + t Q_i(t, z) =0, \hspace{0.1cm} i \in \{ 1,2, \cdots, m\}  \right\}, 
$$
where the $P_i$ are the polynomials defining $C$ as an affine variety as in Theorem \ref{convar} and $Q_i(t, z)$ are polynomials which are homogeneous with respect to a diagonal $\R_{>0}$-action on $\C^N \times \C_t$ extending the scaling $\R_{>0}$-action on $C$ and given by 
$$\lambda \cdot (z, t) = (\lambda^{w_1} z_1, \lambda^{w_2} z_2, \cdots, \lambda^{w_N} z_N, \lambda^{\mu} t ),$$ 
such that $\mu >0$ and $\deg Q_i + \mu = \deg P_i$ with respect to this action. Here, we are also, implicitly, assuming that the polynomials $Q_i$ are chosen so that for $t \neq 0$, the fibers $V_t := p^{-1}(\{ t\})$ are smooth.\\

As in the introduction, we will only consider a ray in $W$, denoted by $W_0$ given by 
$$
W_0 = \left\{ (z,t) \in \C^N \times [0, \infty)_t ; \hspace{0.1cm} P_i(z) + t Q_i(t, z) =0, \hspace{0.1cm} i \in \{ 1,2, \cdots, m\} \right\}.
$$
For the weight $\tilde w := \left(w_1, w_1, w_2, w_2, \cdots, w_N, w_N, \mu \right) \in \left( \R_{>0} \right)^{2N+1}$ and using the identification $\C^N \cong \R^{2N}$, consider the weighted Melrose blow-up $\left[\C^N \times [0, \infty)_t ; \{ 0\} \right]_{\tilde w}$ together with the blow-down map 
$$\beta : [\C^N \times [0,\infty)_t ; \{0\}]_{\tilde w} \rightarrow \C^N \times [0,\infty)_t.$$
Denote $W_b$ the lift of $W_0$ under the blow-down map $\beta$ i.e. 
$$
W_b := \overline{\beta^{-1}\left(W_0 \setminus \{ 0\} \right)}. 
$$
Following Remark \ref{projective coordinates}, by considering the projective coordinates given by $s: =t^{\frac{1}{\mu}}$, $\tilde z_j := s^{- w_j} z_j$, for $j =1,2, \cdots, N$, away from $t=0$, we get that if $(z, t) \in W \setminus C$, then
\begin{align*}
P_i(z) + t Q_i(t, z) = P_i(s \cdot \tilde z) + s^\mu Q_i(s \cdot (1, \tilde z)) & = s^{d_i} \left( P_i(\tilde z) +  Q_i(1, \tilde z) \right) = 0 \\ & \implies P_i(\tilde z) +  Q_i(1, \tilde z) =0.
\end{align*} 
In other words, we get that 
$$W_b \setminus \overline{\beta^{-1}(C \setminus \{ o\}) } \cong \left\{[s : \tilde z : 1] \in [\C^N \times [0,\infty)_t ; \{0\}]_{\tilde w} ; \hspace{0.1cm} P_i(\tilde z) +  Q_i(1, \tilde z) =0, \hspace{0.1cm} i =1,2, \cdots, m \right\}.$$
Therefore, $W_b$ has the structure of a manifold with corners with two boundary hypersurfaces at $s=0$: 
\begin{itemize}
    \item 
$B_I:= W_b \cap \beta^{-1} ( \{ o\})$ is the new face coming from the blow-up of the origin and, by the projective coordinates considered above, it identifies on its interior with $V \cong p^{-1}(\{ 1\})$, the general fiber of $W$ under coordinates $\tilde z$;
\item $B_{II} := \overline{\beta^{-1}(C \setminus \{ o\}) }$ is the lift of the original boundary $C \times \{ 0\}_t$ and it corresponds to a compactification of the cone by its link.    
\end{itemize}
As before, we set $\rho_1 := \sqrt{r(z)^2 + s^2}$ and $\rho_2 := \frac{s}{\sqrt{r(z)^2 + s^2}}$, where $r$ is the radial function of $\omega_C$, then $\rho_1$ is a boundary defining function for $B_I$ and $\rho_2$ is a boundary defining function for $B_{II}$ such that $\rho_1 \rho_2 = s$ and 
\begin{itemize}
    \item $\restr{\rho_1}{B_{II}} = r(z)$;
    \item $\restr{\rho_2}{B_I} = \frac{1}{ r(\tilde z) \sqrt{1 + \frac{1}{r(\tilde z)^2}}} = \frac{1}{r(\tilde z)} + o\left(\frac{1}{r(\tilde z)} \right)$,
\end{itemize}
and we have omitted the $\beta^{*}$ to keep notations light. Similarly, the complex structure is resolved on $\mathcal{C}_b$ in the sense that 
$$
J \in C^\infty \left(W_b ; \mathrm{End}\left(\hspace{0.01cm} ^{c,\varepsilon} T^{*} W_b \right)  \right)\hspace{0.7cm} \text{with} \hspace{0.7cm} J^2 = -Id.
$$

\textbf{Step 2: Global resolution}\\

We can now use the isomorphism $I : (\mathcal{X},x) \rightarrow H^{*}(W,0)$ to glue the local model for the resolution given by $W_b$ near $x \in \mathcal{X}_0$, where $\mathcal{X}_0 := (H \circ I)^{-1}(W_0)$ is the path in the smoothing $\mathcal{X}$ which is the pullback of the ray $W_0$ under the local isomorphism of germs. We denote the resulting space by $\mathcal{X}_b$ with blow-down map
$$
\beta : \mathcal{X}_b \rightarrow \mathcal{X}_0.
$$
Therefore, $\mathcal{X}_b$ is a manifold with corners and together with the natural projection map $ p_b : \mathcal{X}_b \rightarrow [0, \infty)_s$, we get a surgery space $(\mathcal{X}_b, p_b)$ in the sense of Definition \ref{srg} such that 
\begin{itemize}
    \item The fibers over $s >0$ are identified with $X_s := (H \circ \pi )^{-1}(\{s^\mu \})$.
    \item The boundary hypersurface $B_I$ is identified with $\overline{V}$, the compactification of the general fiber $V \cong p^{-1}(\{ 1\})$ by the link of the cone $C$ using the function $\frac{1}{\tilde r}$, where $\tilde r = r(\tilde z)$.
    \item The boundary hypersurface $B_{II}$ is identified with $\overline{X_0}$ the compactification of $X_0 \setminus \{ x\}$ by the link of the cone $C$ using the radial function $r$.
\end{itemize}
Also, we can extend the previously chosen boundary defining functions $\rho_1$ and $\rho_2$ to the rest of $\mathcal{X}_b$ such that $s= \rho_1 \rho_2 $. In addition, since the gluing is done locally, we still get that
$$
J \in C^\infty \left( \mathcal{X}_b; \mathrm{End}\left(\hspace{0.01cm} ^{c,\varepsilon} T^{*} \mathcal{X}_b \right)  \right)\hspace{0.7cm} \text{with} \hspace{0.7cm} J^2 = -Id.
$$
\textbf{Step 3: Construction of the Kähler metrics}\\

Now, we would like to construct a polyhomogeneous family of Kähler metrics on $\mathcal{X}_b$ which allows us to apply Theorem \ref{main}.\\

On the one hand, the conical Ricci-flat Kähler metric $\omega_{CY}$ on $X_0$ naturally lifts to $B_{II}$ and by our choice of boundary defining functions, $\omega_{CY}$ is given in a neighborhood $U_2$ of $\partial B_{II}$ by 
$$
\omega_{CY} = \frac{i}{2} \partial \overline{\partial} \left( \rho_1^2 + f_2 \right),
$$
where $f_2 \in \rho_1^{2+ \lambda_2} C_b^\infty(B_{II})$, for $\lambda_2>0$. Moreover, $f_2$ is polyhomogeneous by Theorem \ref{tmA}.\\

On the other hand, by Assumption \hyperlink{S.2}{S.2}, the general fiber $V \cong p^{-1}(\{ 1\}) \cong \overset{\circ}{B_{I}}$ admits an asymptotically conical Ricci-flat Kähler metric $\omega_{AC}$, which is $i \partial \overline{\partial}$-exact outside a compact. Using our choice of boundary defining functions, in a neighborhood $U_1$ of $\partial B_I$, we have 
$$
\omega_{AC} = \frac{i}{2} \partial \overline{\partial} \left( C\rho_2^{-2} + f_1 \right),
$$
where $f_1 \in \rho_2^{-2+ \lambda_1} C_b^\infty(B_{I})$, for $\lambda_1>0$ and, by Theorem \ref{acphg}, $f_1$ is polyhomogeneous. Here we use \cite[Lemma 2.15]{conlon2013asymptotically} and the construction in \cite[Proof of Theorem 2.4]{conlon2013asymptotically} to get such form. Up to composing by a scaling, we can suppose that $C=1$.\\

The gluing construction in this case is less evident compared to the case of crepant resolutions because we have a degeneration of the complex structure making it not possible to directly extend $\omega_C$ above $s =0$. We get around that issue by using the polarization and considering a family of Hermitian metrics on the line bundles $L_{s} := \restr{\mathcal{L}}{X_s}$ in a similar way to the construction by Spotti \cite{Spotti}.  
\begin{pr}
    With the same notations as before, there exists $\omega \in \mathcal{A}_{phg \geq 0} \left( \hspace{0.01cm} ^{c,\varepsilon} T^{*} \mathcal{X}_b \wedge \hspace{0.01cm} ^{c,\varepsilon} T^{*} \mathcal{X}_b \right)$ polyhomogeneous such that 
    \begin{itemize}
 \item $\omega_s$ is a Kähler form on $X_s$ for all $s >0$ sufficiently small. 
        \item $\restr{\frac{\omega}{s^2}}{B_I} = \omega_{AC}$.
        \item $\restr{\omega}{B_{II}} = \omega_{CY}$.
    \end{itemize}
\end{pr}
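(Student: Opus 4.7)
The plan is to follow the gluing scheme of Proposition \ref{glue}, with one essential modification dictated by the degenerating complex structure: no single underlying smooth manifold carries all the fibers $X_s$, so the model corner potential $\tfrac12\rho_1^2$ used in the crepant case is not a smooth function across $s=0$. Following Spotti \cite{Spotti}, the polarization $\mathcal{L}\to\mathcal{X}$ supplies a smooth ambient Kähler reference that allows the gluing to proceed. A simplification compared to the crepant case is that Assumption \hyperlink{S.2}{S.2} ($\omega_{AC}$ being $i\partial\overline{\partial}$-exact outside a compact) obviates the auxiliary $(1,1)$-form $\tilde\eta$ of Lemma \ref{lemma form}.

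Pick a smooth Hermitian metric $h$ on $\mathcal{L}$ whose Chern curvature $\omega_h:=-i\partial\overline{\partial}\log h$ is positive on every fiber $X_s$ for $|s|$ sufficiently small. Writing $\omega_{CY}=\omega_h|_{X_0}+i\partial\overline{\partial}\varphi_0$ on $X_0^{\mathrm{reg}}$ and picking a smooth local potential $\chi$ for $\omega_h$ near $x$, we find $\varphi_0=\tfrac12\rho_1^2+f_2-\chi$ modulo pluriharmonic, so $\varphi_0$ is polyhomogeneous on $B_{II}$. Using the projective coordinates $\tilde z_j=s^{-w_j}z_j$ from Remark \ref{projective coordinates}, $\beta^{*}\chi$ is smooth on $\mathcal{X}_b$ and constant along $B_I$, hence $\omega_h$ vanishes on $B_I$ in the $(c,\varepsilon)$-sense; moreover the identity $\rho_1^2=\beta^{*}(r(z)^2)+s^2=s^2\rho_2^{-2}$ near the corner gives $s^2\omega_{AC}=\tfrac{i}{2}\partial\overline{\partial}(\rho_1^2+s^2 f_1)$, matching $\omega_h+i\partial\overline{\partial}\varphi_0\sim\tfrac{i}{2}\partial\overline{\partial}(\rho_1^2+f_2)$ near $\partial B_{II}$ at the common leading term $\tfrac12\rho_1^2$.

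Choose regions $V_0,V_1,V_2$ and cutoffs $\psi_1,\psi_2$ as in the proof of Proposition \ref{glue} and polyhomogeneously extend $\varphi_0,f_1,f_2$ into $V_0$ using product coordinates. Define
\[
\omega:=
\begin{cases}
\omega_h+i\partial\overline{\partial}\tilde\varphi_0 & \text{on }V_2\setminus V_0,\\[2pt]
i\partial\overline{\partial}\bigl(\tfrac12\beta^{*}(r(z)^2)+\tfrac12 s^2+\psi_2 f_2+\psi_1\, s^2 f_1\bigr) & \text{on }V_0,\\[2pt]
s^2\,\omega_{AC} & \text{on }V_1\setminus V_0,
\end{cases}
\]
where on $V_0$ we have absorbed $\omega_h=i\partial\overline{\partial}\chi$ into the potential (dropping the additive constant $\chi(x)$). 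The identity $\tilde\varphi_0+\chi=\tfrac12\rho_1^2+f_2$ gives agreement of the $V_0$ and $V_2$ expressions on $\partial V_0\cap V_2$ (where $\psi_2=1,\psi_1=0$), while the computation $s^2\omega_{AC}=\tfrac{i}{2}\partial\overline{\partial}(\rho_1^2+s^2 f_1)$ gives agreement on $\partial V_0\cap V_1$ (where $\psi_1=1,\psi_2=0$). Polyhomogeneity of $\omega$ is inherited piecewise, and positivity of $g:=\omega(\cdot,J\cdot)$ for small $s$ follows, as in Proposition \ref{glue}, from positive definiteness of $g/\rho_1^2$ on $B_I\cup B_{II}$. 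The main obstacle is the absence of a smooth function on $\mathcal{X}$ restricting to $\tfrac12\rho_1^2$ on $X_0$ near $x$; the polarization's smooth potential $\chi$ together with the polyhomogeneous lift $\tfrac12\beta^{*}(r(z)^2)$ provide the needed substitute on $\mathcal{X}_b$.
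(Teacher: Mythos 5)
Your construction is correct and follows essentially the same strategy as the paper's proof: the same three regions $V_0,V_1,V_2$ with cutoffs, the same use of the polarization $\mathcal{L}$ to carry the Kähler data across the degenerating complex structure, and the same positivity argument via positive definiteness of $g/\rho_1^2$ on $B_I\cup B_{II}$. The one genuine implementation difference is the choice of reference on the $V_2$ side: the paper extends the specific Hermitian metric $h_0$ whose curvature is $\omega_{CY}$ to a metric $h$ on $\restr{\mathcal{L}}{V_2}$, takes the fiberwise curvature forms $\tilde\omega_s$ (closed of type $(1,1)$ but not a priori positive for $s>0$) as the outer piece, and interpolates between their local potentials $\rho_s$ and the model potential inside $V_0$ via the convex combination $(1-\psi_2)\rho_s+\psi_2(\cdots)$; you instead fix a globally fiberwise-positive reference $\omega_h$ coming from the projective embedding and carry the discrepancy $\omega_{CY}-\restr{\omega_h}{X_0}$ as a single global potential $\varphi_0$. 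Your route costs the extra observation, which you handle correctly, that $\varphi_0$ is polyhomogeneous near $x$ (via the conical expansion of $\omega_{CY}$ and a bounded pluriharmonic remainder), and it buys an outer reference that is an honest Kähler form on every fiber from the outset. One point to make explicit: the exact agreement of your $V_0$ and $V_2$ expressions on the seam $\partial V_0\cap V_2$ for all small $s>0$, and not merely at $s=0$, is a condition on the chosen extension $\tilde\varphi_0$; you should therefore set $\tilde\varphi_0:=\tfrac12\rho_1^2+f_2-\beta^{*}\chi$ near the seam and interpolate with the global potential strictly inside $V_2$, where any polyhomogeneous interpolation is harmless since closedness is automatic and positivity is only verified on the boundary faces.
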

\begin{proof}
   As before, we consider $3$ regions near the boundary in $\mathcal{X}_b$
\begin{itemize}
    \item $V_0$ a small neighborhood of the corner $\partial B_I= \partial B_{II}$ in $\mathcal{X}_b$ such that $\restr{V_0}{B_{I}} = U_1$ and $\restr{V_0}{B_{II}} = U_2$. We also suppose that $V_0$ has a product structure $V_0 \cong (B_I \cap B_{II}) \times [0,1)_{\rho_1} \times  [0,1)_{\rho_2}$,
    \item $V_1$ a collar neighborhood of $\overline{(B_{I} \setminus U_1)}$ away from the corner,
    \item $V_2$ a collar neighborhood of $\overline{(B_{II} \setminus U_2)}$ away from the corner such that $V_1 \cap V_2 = \emptyset$,
\end{itemize} 
and we set $W:= V_0 \cup V_1 \cup V_2$. Similar to before, we can extend $f_1$ and $f_2$ in a polyhomogeneous way along $V_0$.\\

Here, we cannot directly extend $\omega_{CY}$ on $V_2$ as a Kähler form as we did in the case of crepant resolutions because the complex structure varies along the parameter $s$. We get around that using the line bundle $\mathcal{L}$ from the polarization.\\

By our initial choice, we know that $\omega_{CY} \in 2 \pi c_1(L_0)$. In other words, $\omega_{CY}$ is the curvature form of a Hermitian metric on the line bundle $L_0$. We denote such a metric by $h_0$. Since $L_0$ is the restriction of the line bundle $\mathcal{L}$ to $X_0 \cong \mathring{B_{II}}$, we can consider $h$ a Hermitian metric on $\mathcal{L}$ on $V_2$ which restricts to $h_0$ on $X_0 \cap V_2$. We denote $h_s$ its restriction on $L_s = \restr{\mathcal{L}}{X_s}$ for $s >0$ and we denote the curvature form of $h_s$ with respect to the Chern connection by $\tilde \omega_{s}$. Therefore, $\tilde \omega_{s}$ is a closed $(1,1)$-form on $X_s \cap V_2$ such that $\tilde \omega_{s} \in 2 \pi c_1(\restr{L_s}{X_s \cap V_2})$ and $\tilde \omega_0 = \restr{\omega_{CY}}{X_0 \cap V_2}$. Up to making $V_0$ smaller, consider $\rho_s$, the potential of $\tilde \omega_s$ on $V_2 \cap V_0 \cap X_s$. \\

Now, we consider cut-off functions away from the corner defined as follows. Let $\psi_i : W \rightarrow [0,1]$ be smooth functions, for $i \in \{1,2\}$, such that
\begin{itemize}
    \item $\restr{\psi_1}{V_1 \setminus V_0} =0$ and $\restr{\psi_1}{W \setminus V_1}=1$.
    \item $\restr{\psi_2}{V_2 \setminus V_0} =0$ and $\restr{\psi_2}{W \setminus V_2}=1$.
\end{itemize}
In particular, $\psi_1$ and $\psi_2$ are constant and equal to $1$ near the corner. Therefore, we can define $\omega_s$ on $W$ as the following
$$
 \omega_s := 
\begin{cases}
   \tilde \omega_s & \text{on $V_2 \setminus V_0$}\\
   i \partial \overline{\partial} \left( (1-\psi_2) \cdot \rho_s +  \psi_2 \cdot \left(\frac{\rho_1^2}{2} +  \psi_1 \cdot  f_2 + s^2 f_1 \right) \right) & \text{on $V_0$}\\
   s^2 \omega_{AC} & \text{on $V_1 \setminus V_0$}.
\end{cases}
$$
Therefore, by construction, $\omega_s$ is a well defined closed $(1,1)$-form and it is polyhomogeneous on $W$ with restrictions $\restr{\omega}{B_{II}} = \omega_{AC}$ and $\restr{\frac{\omega}{s^2}}{B_I}= \omega_{AC}.$ In particular, $g_s := \omega_s (., J.) \in \mathcal{A}_{phg}(W, Sym^2(\hspace{0.01cm} ^{c,\varepsilon} T^{*} W))$. Since $g_s$ is positive definite as a section of $Sym^2(\hspace{0.01cm} ^{c,\varepsilon} T^{*} W)$ on $B_{I}$ as well as on $B_{II}$ (namely, $\frac{g_s}{\rho_1^2}$ is positive definite as a section of $Sym^2(\hspace{0.01cm} ^{b,\varepsilon} T^{*} W)$), we get that $g_s$ remains positive definite for $s$ sufficiently small.

\end{proof}
Hence, Theorem \ref{main} applies and we get a polyhomogeneous family of Ricci-flat Kähler metrics on the path $\mathcal{X}_0$ for small $s>0$ proving Theorem \ref{tmC}.
\begin{cor}
    There exists a polyhomogeneous family of non-vanishing holomorphic $(n,0)$-form $\Omega_s$ on $X_s$, for $s>0$ sufficiently small, such that the polyhomogeneous family of Ricci-flat Kähler metrics $\omega_{CY,s}$ satisfies
    $$
    \omega_{CY,s} = i^{n^2} \Omega_s \wedge \overline{\Omega_s}.
    $$
\end{cor}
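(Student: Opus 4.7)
The plan is to produce $\Omega_s$ by normalising a canonical global section. Since the polarized smoothing $(\mathcal{X}, \mathcal{L}, \pi)$ has trivial relative canonical bundle $\mathcal{K}_{\mathcal{X}/\mathbb{D}} \cong \mathcal{O}_{\mathcal{X}/\mathbb{D}}$, there is a nowhere-vanishing holomorphic section $\tilde{\Omega}$ whose restriction $\tilde{\Omega}_s$ to each smooth fibre $X_s$ is a nowhere-vanishing holomorphic $(n,0)$-form. The desired $\Omega_s$ will be a polyhomogeneous scalar multiple of $\tilde{\Omega}_s$, with the multiplier determined by the Calabi--Yau identity $\omega_{CY,s}^{n} = i^{n^{2}} \Omega_s \wedge \overline{\Omega_s}$ that we wish to enforce.

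First I would show that $\tilde{\Omega}$ lifts to a polyhomogeneous, non-vanishing section of $\Lambda^{n}(\hspace{0.01cm}^{c,\varepsilon}T^{*}\mathcal{X}_b)_{\C}$ along the fibres of $p_b$, possibly after multiplying by an explicit power of the total boundary defining function $s = \rho_1 \rho_2$. The point is that, just as for the complex structure $J$ in Step~2 of the construction, the projective coordinates $\tilde{z}_j = s^{-w_j} z_j$ that resolve the blow-up express $\tilde{\Omega}$ algebraically in terms of $(s, \tilde{z})$: on the fibre $X_s$ we have $dz_j = s^{w_j}\, d\tilde{z}_j$, so $\tilde{\Omega}|_{X_s} = s^{\sum_j w_j}\, \tilde{\Omega}(\tilde{z})$ up to holomorphic factors smooth in $(s, \tilde{z})$. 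Together with the description of $^{c,\varepsilon}T^{*}\mathcal{X}_b$ near $B_I$ and $B_{II}$, this gives polyhomogeneity, and non-vanishing follows from non-vanishing of $\tilde{\Omega}$ on the smooth locus of $\mathcal{X}$.

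Next, since $\omega_{CY,s}$ is Ricci-flat and $X_s$ is compact Kähler, the function
\[
f_s \;:=\; \frac{\omega_{CY,s}^{n}}{i^{n^{2}}\, \tilde{\Omega}_s \wedge \overline{\tilde{\Omega}_s}}
\]
is pluriharmonic on $X_s$, hence a positive constant depending only on $s$. Integrating over $X_s$ gives
\[
f_s \;=\; \frac{\int_{X_s} \omega_{CY,s}^{n}}{\int_{X_s} i^{n^{2}}\, \tilde{\Omega}_s \wedge \overline{\tilde{\Omega}_s}}.
\]
Both integrands are polyhomogeneous $(c,\varepsilon)$-densities on $\mathcal{X}_b$, and $p_b$ is a b-fibration, so by Melrose's push-forward theorem (Proposition~\ref{fwd}) the numerator and denominator are polyhomogeneous functions of $s$. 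The denominator is strictly positive for $s > 0$ and bounded away from zero as $s \to 0$ (since $\tilde{\Omega}_0$ is non-vanishing on $X_0 \setminus \{x\}$ and the $(c,\varepsilon)$-density interpretation controls the behaviour near the singularity), so the ratio is polyhomogeneous and bounded below. Setting $\Omega_s := f_s^{1/2}\, \tilde{\Omega}_s$ with the positive real branch of the square root yields the required polyhomogeneous family.

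The main obstacle is the first step: carefully justifying that a holomorphic section of $K_{\mathcal{X}/\mathbb{D}}$ lifts to a polyhomogeneous section of $\Lambda^{n}(\hspace{0.01cm}^{c,\varepsilon}T^{*}\mathcal{X}_b)_{\C}$ after the appropriate $s$-rescaling. This parallels the analogous statement for $J$ and for the Kähler form $\omega$ in the gluing construction, but needs the bookkeeping of weights $w_j$ (and $\mu$ for $t = s^{\mu}$) to identify the correct rescaling power. Once this is in place, the rest of the argument is a direct application of the push-forward theorem and the uniqueness of the Calabi--Yau volume form.
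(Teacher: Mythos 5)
Your proposal is correct and follows essentially the same route as the paper: restrict a nowhere-vanishing relative holomorphic $(n,0)$-form, observe that Ricci-flatness forces the ratio $\omega_{CY,s}^n/(i^{n^2}\tilde\Omega_s\wedge\overline{\tilde\Omega_s})$ to be a fiberwise constant computed by integration, apply Melrose's push-forward theorem to both integrals, and normalize by the square root. The only difference is that you spell out (via the projective coordinates and weight bookkeeping) why $\tilde\Omega$ lifts to a polyhomogeneous non-vanishing section of $\bigl(\bigwedge^n{}^{c,\varepsilon}T^{*}\mathcal{X}_b\bigr)^{\C}$, a step the paper asserts as a natural extension without further detail.
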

\begin{proof}
    Let $\tilde \Omega$ be any nowhere vanishing relative holomorphic $(n,0)$-form on the smoothing $\mathcal{X}$. Therefore, its restriction to the path $\mathcal{X}_0$ is also holomorphic and therefore, naturally extends to a polyhomogeneous section of $\left(\bigwedge^{n} \hspace{0.07cm} (^{c, \varepsilon} \hspace{0.01cm} T^{*} \mathcal{X}_b) \right)^{\C}$. Moreover, since $\omega_{CY,s}$ is a Ricci-flat Kähler metric, we get that 
    $$
    \omega_{CY,s}^n = c_s i^{n^2} \tilde \Omega_s \wedge \overline{\tilde \Omega_s}, \hspace{0.1cm} \hspace{0.1cm} c_s = \frac{\int_{X_s} \omega_s^n}{\int_{X_s} i^{n^2} \tilde \Omega_s \wedge \overline{\tilde \Omega_s}}. 
    $$
    By the Melrose pushforward theorem, we get that both $\int_{X_s} \omega_s^n$ and $\int_{X_s} i^{n^2} \tilde \Omega_s \wedge \overline{\tilde \Omega_s}$ are polyhomogeneous. Moreover, $ \int_{X_s} i^{n^2} \tilde \Omega_s \wedge \overline{\tilde \Omega_s}$ is bounded below and above by certain uniform positive numbers $M, M'>0$ near $s=0$ and similarly for $\int_{X_s} \omega_s^n$, therefore, using the Taylor expansion of $\frac{1}{1+ y}$ and $\sqrt{1 + y}$, we can conclude that $\sqrt{c}$ is polyhomogeneous. Therefore, $\Omega_s := \sqrt{c_s} \tilde \Omega_s$ satisfies the wanted properties.  
 
\end{proof}
\begin{rem}
    In the work of Melrose and Zhu \cite{melrose_resolution_2018, MR4001023}, the authors use the polyhomogeneity of constant scalar metrics to study the boundary behaviour of the Weil-Petersson metric for for Riemann moduli spaces. A similar result in our context would be trivial because the Weil-Petersson metric for deformations of Calabi-Yau manifolds does not depend on the choice of the polarization and could be expressed only in terms of a choice of holomorphic relative $(n,0)$-form. See Tian \cite[Theorem 2]{MR915841}.
\end{rem}

\section{Formal solution} \label{secform}
Now we get back to proving Theorem \ref{main}. We use the same notations of section \ref{set} but we drop the $\varepsilon$ index for simplicity. In this section, we want to construct a polyhomogeneous approximate solution to the complex Monge-Ampère equation 
\begin{align} \label{MA}
\mathcal{M}(u) =  \frac{\left( \omega + i \partial \bar \partial u \right)^n}{\omega^n} = e^{v}.
\end{align}
First, we have the following expansion 
  \begin{align*}
 \mathcal{M}(u) & = \frac{\left( \omega + i \partial \bar \partial u \right)^n}{\omega^n} \\   & =1 + \Delta u + \sum_{l=2}^{n} C_{l,n} \left( \frac{\omega^{n-l} \wedge (i \partial \bar \partial u )^l}{\omega^{n}} \right) \\ & = 1 + \Delta u + Q(i \partial \bar \partial u), 
 \end{align*}
where $Q(i \partial \overline{\partial} u)$ includes all the non-linear terms. Therefore, equation (\ref{MA}) is equivalent to 
\begin{align} \label{MA2}
\Delta u + Q(i \partial \overline{\partial} u)= e^{v} -1
\end{align}

 Now, remember that, in the conditions of Theorem \ref{main},  $\mathcal{X}_b$ has two boundary hypersurfaces: $B_{I}$ and $B_{II}$. As above, we denote $\rho_1$ and $\rho_2$ a choice of boundary defining functions for $B_{I}$ and $B_{II}$ respectively such that $\varepsilon = \rho_1 \rho_2$. We also denote the two model Kähler forms $\omega_1 := \restr{\frac{\omega}{\varepsilon^2}}{B_{I}}$ and $\omega_2 := \restr{\omega}{B_{II}}$ and the associated metrics by $g_1$ and $g_2$. By the conditions of Theorem \ref{main}, $g_1$ is an asymptotically conical polyhomogeneous metric and $g_2$ is a conical polyhomogeneous metric. We denote their respective Laplacian operators by: $\Delta_{I}$ and $\Delta_{II}$. We have already seen the mapping properties of conical and asymptotically conical Laplacians in section 2.3. We will use those to prove the following formal solution to the complex Monge-Ampère equation 
 \begin{pr}
     Using notations of Theorem \ref{main}, there exists $u_0 \in \mathcal{A}_{phg}(\mathcal{X}_b)$ such that $\omega + i \partial \overline{\partial}u_0 >0$ for small $\varepsilon$ and
     $$
\frac{\left(\omega + i \partial \bar{\partial} u_{0}\right)^n}{\omega^n} = e^{v}-g
$$
where $g \in \dot{C}^{\infty} (\mathcal{X}_b)$ and $\int_{X_\varepsilon} g_\varepsilon \omega_\varepsilon^n = 0$. In other words, $\tilde \omega_0:= \omega + i \partial \bar{\partial} u_{0}$ is a Kähler form which is almost Ricci-flat with an error term vanishing to infinite order at the boundary of $\mathcal{X}_b$.
 \end{pr}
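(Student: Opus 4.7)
The plan is to adapt the iterative polyhomogeneous construction already carried out in the proof of Proposition \ref{ricpot} (polyhomogeneity of the Ricci potential) to the nonlinear Monge-Amp\`ere setting. Using the expansion
\[
\mathcal{M}(u) - 1 = \Delta u + Q(i\partial\bar\partial u),
\]
I rewrite \eqref{MA} as $\Delta u + Q(i\partial\bar\partial u) = e^{v}-1$. The quadratic-or-higher nature of $Q$ means that, provided $u$ vanishes at positive order at both $B_I$ and $B_{II}$, the nonlinear remainder $Q(i\partial\bar\partial u)$ is of strictly higher order than $\Delta u$. The right-hand side $e^{v}-1$ is polyhomogeneous and vanishes at both boundary hypersurfaces by Proposition \ref{ricpot}, and its integral over each fiber vanishes by the normalization of $v$.

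I produce the formal solution $u_0$ by iteration: at stage $k$ I will have a polyhomogeneous $u^{(k)}$ with
\[
\mathcal{M}(u^{(k)}) - e^{v} = f^{(k)}, \qquad \int_{X_\varepsilon} f^{(k)}_\varepsilon\, \omega_\varepsilon^{n} = 0,
\]
and $f^{(k)}$ vanishing to increasing order at both boundaries as the iteration proceeds. To pass to stage $k+1$, I add to $u^{(k)}$ a correction $w^{(k)}$ designed to kill the leading coefficient of $f^{(k)}$ at each boundary, mirroring the two sub-steps in the proof of Proposition \ref{ricpot}: first improve the expansion of $f^{(k)}$ at $B_{II}$ by solving a linear equation on the fiber $B_{II}$ with the conical Laplacian $\Delta_{II}$, invoking Proposition \ref{conlap} and Proposition \ref{phgconlap}, the integrability condition on $B_{II}$ being obtained from the vanishing of the corresponding coefficient in the polyhomogeneous expansion of $\int_{X_\varepsilon} f^{(k)}_\varepsilon\omega_\varepsilon^n \equiv 0$ via the Melrose push-forward theorem (Proposition \ref{fwd}); then improve at $B_I$ using the asymptotically conical Laplacian isomorphism Proposition \ref{aclap}, choosing the weight just below the one already achieved at $B_{II}$ so as not to undo the previous gain. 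Both the pure nonlinear term $Q(i\partial\bar\partial w^{(k)})$ and the cross contribution $Q(i\partial\bar\partial(u^{(k)}+w^{(k)})) - Q(i\partial\bar\partial u^{(k)})$ have strictly higher order than $\Delta w^{(k)}$ at the boundary just treated, so $f^{(k+1)}$ strictly improves. The zero-integral property is preserved automatically because on each closed fiber one has $\int_{X_\varepsilon}\mathcal{M}(u^{(k+1)})\omega_\varepsilon^n = \int_{X_\varepsilon}\omega_\varepsilon^n = \int_{X_\varepsilon}e^{v}\omega_\varepsilon^n$.

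Borel-summing the corrections $w^{(k)}$ asymptotically (which is possible for polyhomogeneous expansions with admissible index sets) produces a single polyhomogeneous $u_0$ satisfying $\mathcal{M}(u_0) - e^{v} = -g$, where $g$ vanishes to all orders at $B_I \cup B_{II}$, i.e. $g \in \dot{C}^{\infty}(\mathcal{X}_b)$, and $\int_{X_\varepsilon}g_\varepsilon\omega_\varepsilon^n = 0$ follows again from the closed-fiber identity above. Positivity of $\omega + i\partial\bar\partial u_0$ for small $\varepsilon$ is immediate since $u_0$ has positive-index expansion at both boundaries, so $i\partial\bar\partial u_0$ is a small perturbation of $\omega$ measured with respect to the $(c,\varepsilon)$-structure.

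The main technical obstacle is the bookkeeping of the second sub-step: solving on $B_I$ with $\Delta_I$ inevitably reintroduces contributions at $B_{II}$, so I must choose the correction weight strictly below the weight already achieved at $B_{II}$ (as with the parameter $b>0$ arbitrarily small in the proof of Proposition \ref{ricpot}) and verify that this interaction still leaves a net gain at each iteration. This is precisely the "not compromising the improvement at $B_{II}$" trick used there, and it carries over verbatim here because the additional nonlinear Monge-Amp\`ere contributions only deepen in order as the iteration progresses, so they never threaten either the polyhomogeneous bookkeeping or the asymptotic summability of the series.
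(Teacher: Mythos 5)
Your proposal follows essentially the same route as the paper's proof in Section \ref{secform}: linearize the Monge--Amp\`ere operator as $\Delta u + Q(i\partial\bar\partial u) = e^{v}-1$, iteratively remove the leading coefficients first at $B_{II}$ (via Propositions \ref{conlap} and \ref{phgconlap}, with the integrability condition extracted from the vanishing fiber integral by the push-forward theorem \ref{fwd}) and then at $B_I$ (via Proposition \ref{aclap}) with the same weight bookkeeping as in Proposition \ref{ricpot}, noting that $Q$ only contributes higher-order errors, and then asymptotically sum. The only device you do not mention is the rescaling the paper performs once the error has been pushed to order $\varepsilon^{2n-2-\beta}$ --- writing $e^{v}-1=\varepsilon^{2n-2-\beta}\tilde v$ and substituting $u=\varepsilon^{2n-2-\beta}\tilde u$ --- which is needed because the model Laplacian isomorphisms hold only in bounded weight windows (e.g.\ $(0,2n-2)$ for $\Delta_I$) so the naive iteration stalls there; since this step already appears in the proof of Proposition \ref{ricpot} that you explicitly mirror, this is a presentational rather than a substantive gap.
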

\begin{proof}
    By conditions of Theorem \ref{main}, we know that $v$ is polyhomogeneous and vanishes on $B_{I}$ and $B_{II}$, therefore, the same is true for $e^v -1$ since $e^v -1 = \sum_{i=1}^{\infty} \frac{v^i}{i !}$. Moreover, we have 
    $$
\int_{\varepsilon = c} 
\bigl(e^{\,v}-1\bigr)\,\omega^n = 0.
$$
We follow the same strategy is in the proof of Proposition \ref{ricpot} with minor differences accounting for the non-linearity of equation (\ref{MA2}). Accordingly, We solve, first, on $B_{II}$ and then on $B_{I}$.
    \begin{itemize}
        \item 
        Suppose $0< \delta < 2n$ is the smallest power of $\rho_2$ appearing in the polyhomogeneous expansion of $e^v -1$ near $B_{II}$ and let $ v_\delta :=  \sum_{i=0}^k v_{\delta, i} \rho_2^{\delta}\hspace{0.03cm}(\log \rho_2)^i$ be the terms of the expansion at such order, i.e.
        $$
        e^v -1 = v_\delta + o(\rho_2^\delta) =   \sum_{i=0}^k v_{\delta, i} \rho_2^{\delta}\hspace{0.03cm}(\log \rho_2)^i + o(\rho_2^\delta).
        $$
Following the same arguments as in the proof of Proposition \ref{ricpot}, there exists a polyhomogeneous $u$ such that
$$ \Delta u = v_\delta + o(\rho_2^{\delta}).$$ 
In fact, $u$ is of the form $u=\sum_{j=0}^k w_{\delta,j} \hspace{0.02cm} \varepsilon^\delta (\log \varepsilon)^j$, where $w_{\delta,j} \in \rho_1^{a} C^\infty_b(\mathcal{X}_b)$ is polyhomogeneous and $a$ is a well chosen weight in $(0,2) \setminus \mathcal{P}$ or $(2-2n,0)$ as in Proposition \ref{conlap}. Therefore, for all $0<\delta'< \delta$, we have that 
$$
\partial\overline{\partial}\,u \;\in\; \varepsilon^{\,\delta'}\,\rho_{1}^{\,a-2}\,C_{b,\varepsilon}^{\infty} (\mathcal{X}_b)
\;=\;
\rho_{2}^{\,\delta'}\,\rho_{1}^{\,a-2+ \delta'}\,C_{b,\varepsilon}^{\infty}(\mathcal{X}_b).
$$
Choosing $ \delta' > \max \{ \frac{\delta}{2}, 2 -a \}$, we get $a-2+ \delta' > 0$ and $2\delta' > \delta$, therefore $Q(i \partial \overline{\partial} u) \in o ( \rho_2^\delta)$. Hence, $u$ is a formal solution to equation (\ref{MA2}) at order $\delta$ near $B_{II}$, i.e.
$$
\Delta u + Q(i \partial \overline{\partial}u) = e^v -1 + o(\rho_2^\delta). 
$$
For $\varepsilon>0$ sufficiently small, $\omega + i \partial \overline{\partial} u $ restricts to a Kähler form on the fibers, therefore, replacing $\omega$ by $\omega + i \partial \overline{\partial} u $ and rewriting equation (\ref{MA2}), we get a new $v$ such that $e^v - 1$ vanishes at order $\delta$ on $B_{II}$. Iterating this argument we can remove all terms in the expansion near $B_{II}$ of order $0< \delta < 2n$. Hence, we may now suppose that 
$$
e^v -1 \in O(\rho_2^{2n - \beta}) \hspace{0.1cm}\text{for some}\;\beta>0\;\text{arbitrarily small}.
$$ 

\item Now, similar to Proposition \ref{ricpot}, we solve on $B_{I}$ without compromising the improvement we established on $B_{II}$. Suppose $0< \delta< 2n -2 $ is the smallest power of $\rho_1$ in the expansion of $e^v -1$ on $B_{I}$ and suppose $v_\delta =\sum_{j=0}^k v_{\delta,j} \rho_1^{\delta} (\log \rho_1)^j$ is its expansion at that order, i.e.
$$
e^v -1 = v_\delta + o(\rho_1^\delta).
$$
Again, following the same arguments as in Proposition \ref{ricpot}, there exists a polyhomogeneous $u$ such that 
$$
\Delta u = v_\delta + o(\rho_1^{\delta}), 
$$
where $u$ is of the form $u=\sum_{j=0}^k w_{\delta,j} \hspace{0.02cm} \varepsilon^{\delta+2} (\log \varepsilon)^j$ with $w_{\delta,j} \in \rho_2^{a} C^\infty_b(\mathcal{X}_b)$ polyhomogeneous and $a$ is a chosen weight in $(0,2n-2-\delta)$. Hence, by choosing $a < 2n-2-\delta$ close enough to $2n-2-\delta$, we also get that, for all $0<\delta' < \delta$  
$$
u \in \varepsilon^{\delta' +2} \rho_2^a C_b^\infty \implies u \in \rho_2^{a+ \delta' +2} \rho_1^2 C_b^\infty \implies \Delta u \in \rho_2^{a+ \delta' +2} C_b^\infty = \rho_2^{2n - b} C_b^\infty, 
$$

with $b>0$ as small as we want.  Moreover, for $0<\delta' < \delta$ we have 
\begin{align*}
u \;\in\; \varepsilon^{\,\delta'+2} \rho_{2}^{\,a} C_{b}^{\infty} 
\hspace{0.1cm} & \Longrightarrow\hspace{0.1cm}
\partial\overline{\partial}\,u
\;\in\;
\varepsilon^{\,\delta'+2}\rho_{2}^{\,a}\rho_{1}^{\,-2} C_{b}^{\infty}\\
& \Longrightarrow\hspace{0.1cm}
\partial\overline{\partial}\,u
\;\in\;
\,\rho_{2}^{\,a +2 + \delta'} \rho_{1}^{\,\delta'} C_{b}^{\infty}
\end{align*}
Choosing $\delta' > \frac{\delta}{2}$, we ensure that $\partial\overline{\partial}\,u$ only introduces an error term of order $2n - \gamma$ near the face $B_{II}$ with $\gamma = 2n-2-\delta' - a > 0$ as small as we want and the non-linear terms $Q(i \partial \overline{\partial} u)$ only introduce terms of order $\rho_1^{2 \delta'}$ near the face $B_{I}$ with $2 \delta' > \delta$. In particular, $u$ is a formal solution to equation (\ref{MA2}) at order $\delta$ near both $B_{I}$ and $B_{II}$. As before, by replacing $\omega$ with $\omega + i \partial\overline{\partial}\,u $ and iterating the argument we ensure that 
$$
e^v -1 \in O(\varepsilon ^{2n - 2 - \beta}) \hspace{0.1cm} \text{for $\beta >0$ arbitrarily small.}
$$ 
Writing $e^v -1 = \varepsilon ^{2n - 2 - \beta} \tilde{v}$, and plugging $u = \varepsilon ^{2n - 2 - \beta} \tilde{u}$ in equation (\ref{MA2}), we get 
\begin{align}\label{MA3}
\Delta \,\widetilde{u} \;+\;\varepsilon^{\,2n-2-\beta}\, \tilde Q\bigl( i\partial\overline{\partial}\,\widetilde{u}\bigr)
\;=\;
\,\widetilde{v} 
\end{align}
   \end{itemize}
Repeating the same arguments as before, we can construct formal solutions to (\ref{MA3}) up to order $2n - 2 - \beta' $ for $\beta'>0$ arbitrarily small. By iteration, one obtains a formal solution at all orders. In other words, after summation, we proved that there exists $u_0 \in \mathcal{A}_{phg}(\mathcal{X}_b)$ such that $\omega + i \partial \overline{\partial}u >0$ for small $\varepsilon >0$ and 
$$
\frac{(\omega + i \partial \overline{\partial} u_0)^n}{\omega^n} = e^v - g, 
$$
where $g \in \dot{C}^\infty(\mathcal{X_b})$ and after integrating both sides, we get $\int_{X_\varepsilon} g_\varepsilon \omega_\varepsilon^n= 0$ for $\varepsilon>0$.

\end{proof}

Therefore, we were able to construct a polyhomogeneous formal solution to the complex Monge-Ampère equation. In the next section, we finish the proof of theorem \ref{main} using a perturbation that we get using a Banach fixed point argument.
\section{Banach fixed point argument} \label{secfix}
In the previous section, we improved the Kähler form in the following sense: there exists a polyhomogeneous $u_{0}$ such that 
$$
\frac{\left(\omega + i \partial \bar{\partial} u_{0}\right)^n}{\omega^n} = e^v-g,
$$
and $g \in O( \varepsilon^{\infty})$.\\ In other words, the Kähler form defined by $\tilde \omega_{0} := \omega + i \partial \bar{\partial} u_{0}$ is almost Ricci-flat with an error term vanishing to infinite order in $\varepsilon$.\\

In this section, we want to perturb $\tilde \omega_0$ for small $\varepsilon$ to find a Calabi-Yau metric. More precisely, we want to solve for $\phi \in O(\varepsilon^{\infty})$ the following equation 
\begin{align} \label{MA8}
\frac{\left(\tilde \omega_0 + i \partial \bar{\partial} \phi\right)^n}{\tilde \omega_0^n} = \frac{1}{1 - e^{-v} g} = \frac{e^{v}}{e^{v} - g}.
\end{align}
First, we have the following expansion 
$$
\frac{\left(\tilde \omega_0+i \partial \bar{\partial} \phi\right)^n}{\tilde \omega_0^n} = 1+\Delta_{\tilde \omega_0} \phi+\sum_{j=2}^n \frac{n!}{(n-j)!j!}\left(\frac{\tilde \omega_0^{n-j} \wedge(i \partial \bar{\partial} \phi)^j}{\tilde \omega_0^n}\right).
$$
We rewrite the equation in the following way
\begin{align*}
\Delta_{\tilde \omega_0} \phi & = \left( \frac{e^{v}}{e^{v} - g} - 1 \right) -  \sum_{j=2}^n \frac{n!}{(n-j)!j!}\left(\frac{\tilde \omega_0^{n-j} \wedge(i \partial \bar{\partial} \phi)^j}{\tilde \omega_0^n} \right) \\ & = \left( \frac{g}{e^{v} - g} \right) -  \sum_{j=2}^n \frac{n!}{(n-j)!j!}\left(\frac{\tilde \omega_0^{n-j} \wedge(i \partial \bar{\partial} \phi)^j}{\tilde \omega_0^n} \right).
\end{align*}

Now, we want to prove that, for all $N$ and $M$ sufficiently large, $\nu$ chosen as in Lemma \ref{lmunf}, and for $\varepsilon$ sufficiently small, there exists a unique solution $\phi \in \varepsilon^N H_{\nu}^M$ for (\ref{MA}). After plugging $\phi = \varepsilon^N \hat \phi$ in the equation above and dividing by $\varepsilon^N$ on both sides, the equation becomes 

\begin{align} \label{MAdiv}
\Delta_{\tilde \omega_0} \hat \phi = \frac{g}{\varepsilon^N (e^{v} - g)} - Q_{\varepsilon}(i \partial \bar{\partial} \hat \phi),
\end{align}
where 
$$
Q_{\varepsilon}(i \partial \bar{\partial} \hat \phi):=\sum_{j=2}^n \frac{n!}{(n-j)!j!} \varepsilon^{N(j-1)}\left(\frac{\tilde \omega_0^{n-j} \wedge(i \partial \bar{\partial} \hat \phi)^j}{\tilde \omega_0^n} \right).
$$
We mention a lemma concerning the closedness of weighted Sobolev spaces under multiplication \cite[corollary 6.8]{pacini_desingularizing_2013}. 
\begin{lm} \label{lmmult}
 Let $n=\text{dim($\mathcal{X}_b$)}$. Assume $k >\frac{n}{2}$. Then the corresponding weighted Sobolev spaces are closed under multiplication, in the following sense. For any $\nu_1$ and $\nu_{\mathbf{2}}$ there exists $C>0$ such that, for all $u \in H_{ \nu_1}^k$ and $v \in H_{ \nu_2}^k$,
$$
\|u v\|_{H_{\nu_1 + \nu_2}^k} \leq C\|u\|_{H_{ \nu_1}^k} \cdot\|v\|_{H_{ \nu_2}^k}
$$ 
\end{lm}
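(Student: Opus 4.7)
The strategy is to reduce the weighted estimate to the unweighted algebra property for b-Sobolev spaces, and then invoke a uniform b-Sobolev embedding. First, I would extract the weights: writing $u = \rho_1^{\nu_1} \tilde u$ and $v = \rho_1^{\nu_2} \tilde v$ with $\tilde u, \tilde v \in H^k_{b,\varepsilon}(\mathcal{X}_b)$, one has $uv = \rho_1^{\nu_1+\nu_2}\,\tilde u \tilde v$, so after cancellation of the common weight the claim reduces to the unweighted multiplication estimate
\[
\|\tilde u \tilde v\|_{H^k_{b,\varepsilon}} \;\leq\; C\,\|\tilde u\|_{H^k_{b,\varepsilon}}\,\|\tilde v\|_{H^k_{b,\varepsilon}},
\]
with a constant $C$ independent of $\varepsilon \in (0, \varepsilon_0]$.

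For this reduced claim, I would run the usual Moser-type argument adapted to the $(b,\varepsilon)$-setting. For any tuple $V_1, \ldots, V_j$ of $(b,\varepsilon)$-vector fields with $j \leq k$, the Leibniz rule gives a finite expansion
\[
V_1 \cdots V_j(\tilde u \tilde v) \;=\; \sum_{I \sqcup J = \{1, \ldots, j\}} c_{I,J}\,(V_I \tilde u)(V_J \tilde v),
\]
where $V_I$ denotes the composition of the $V_i$ with $i \in I$ in the induced order. In each summand at least one factor involves at most $\lfloor j/2 \rfloor \leq \lfloor k/2 \rfloor$ derivatives; when $k > n/2$ one has $k-\lfloor k/2 \rfloor > n/2$, so the b-Sobolev embedding $H^{k-\lfloor k/2 \rfloor}_{b,\varepsilon} \hookrightarrow L^\infty$ places that factor in $L^\infty$, while the other is kept in $L^2$ with respect to a $(b,\varepsilon)$-volume. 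H\"older's inequality, summed over the finitely many splittings and over $j \leq k$, then yields the desired Moser bound, provided all constants can be taken uniform in $\varepsilon$.

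The main obstacle is exactly that uniformity, since naively the Sobolev embedding on the fiber $X_\varepsilon$ could degenerate as $\varepsilon \to 0$. This is the content of the b-surgery framework developed in \cite{melrose_analytic_1995} and carried out systematically in \cite{pacini_desingularizing_2013}: one establishes the uniform embedding $H^k_{b,\varepsilon} \hookrightarrow L^\infty$ for $k > n/2$ by covering $\mathcal{X}_b$ with a finite atlas of model regions (the interior of $B_I$, the interior of $B_{II}$, and a neighborhood of the corner modeled on a $(b,\varepsilon)$-cusp), choosing a partition of unity whose members are uniformly bounded in $(b,\varepsilon)$-norms, and running the classical Euclidean Sobolev embedding in each model with constants independent of $\varepsilon$ thanks to the scale invariance of the $(b,\varepsilon)$-structure. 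Once this uniform embedding is in place, the Moser step above is routine and the lemma follows; the detailed execution is \cite[Corollary 6.8]{pacini_desingularizing_2013}.
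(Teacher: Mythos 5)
The paper does not actually prove this lemma: it is quoted directly from \cite[Corollary 6.8]{pacini_desingularizing_2013}, so there is no in-paper argument to compare against. Your overall strategy --- factor out $\rho_1^{\nu_1}$ and $\rho_1^{\nu_2}$ to reduce to the unweighted algebra property for $H^k_{b,\varepsilon}$, then prove that by a Leibniz expansion together with a uniform-in-$\varepsilon$ Sobolev embedding, with the uniformity supplied by the b-surgery structure --- is the standard route and is consistent with the cited source.

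There is, however, a concrete error in your derivative count. You assert that the factor carrying at most $\lfloor j/2\rfloor\leq\lfloor k/2\rfloor$ derivatives can be placed in $L^\infty$ because ``$k>n/2$ implies $k-\lfloor k/2\rfloor>n/2$''. This implication is false: $k-\lfloor k/2\rfloor=\lceil k/2\rceil$, and for example with $n=6$ and $k=4>n/2$ one has $\lceil k/2\rceil=2\not>3$, so the embedding $H^{2}_{b,\varepsilon}\hookrightarrow L^\infty$ you need is unavailable. The naive $L^\infty\times L^2$ split only works for the endpoint terms ($|I|=0$ or $|J|=0$); as written, your argument establishes the algebra property only for $k$ of size roughly $n$, not for all $k>n/2$. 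This matters here, since the lemma is applied in Section 7 with $k=M-2$ assumed only to exceed $n/2$. The standard repair for the intermediate terms is H\"older with exponents $1/p+1/q=1/2$ combined with the (uniform in $\varepsilon$) embeddings $H^{k-|I|}_{b,\varepsilon}\hookrightarrow L^{p}$ and $H^{k-|J|}_{b,\varepsilon}\hookrightarrow L^{q}$ coming from Gagliardo--Nirenberg interpolation; with that substitution, and the uniformity of constants that you correctly identify as the real content of the $(b,\varepsilon)$-framework, the proof closes in the full range $k>n/2$.
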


Now, following the same notations of Lemma \ref{lmunf}, we state our main theorem:
\begin{tm}
Let $\nu \in (2-n,0)$, $M-2 > \frac{n}{2}$ and $N > 2 - \nu$. For $\varepsilon>0$ sufficiently small, the operator: 
\begin{align*}
K_N : \left(H_{\nu}^M\right)' \rightarrow  \left(H_{\nu}^M \right)'
\end{align*}
that sends each $w \in \left(H_{ \nu}^M \right)'$ to the unique $f \in \left(H_{ \nu}^M \right)'$  such that : \begin{align}
 \Delta_{\tilde \omega_0} f =  \frac{g}{\varepsilon^N (e^{v} - g)} - Q_{\varepsilon}(i \partial \bar{\partial} w)\end{align}
is well defined, induces a contraction on $U_{\delta} := \left\{ w \in \left(H_{\nu}^M \right)', \hspace{0.1cm} ||w||_{H_{\nu}^M} \leq \delta \right\}$ for small $\delta >0$ and therefore has a unique fixed point on $U_{\delta}$. The fixed point gives a solution to the equation (\ref{MAdiv}).
\end{tm}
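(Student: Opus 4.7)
The plan is a standard Banach fixed-point argument using the uniform inverse estimate of Lemma \ref{lmunf} for the linear problem together with the fact that both the inhomogeneous term and the nonlinearity carry high powers of $\varepsilon$. First I would verify that $K_N$ is well-defined on $(H^M_\nu)'$. Given $w \in (H^M_\nu)'$, write
\[
\mathcal{R}(w) \;:=\; \frac{g}{\varepsilon^N (e^{v} - g)} \;-\; Q_{\varepsilon}(i \partial \bar{\partial} w).
\]
For $\mathcal{R}(w)$ to lie in the range of $\Delta_{\tilde{\omega}_0}$, one needs $\int_{X_\varepsilon} \mathcal{R}(w)\,\tilde{\omega}_{0,\varepsilon}^n = 0$. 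Using $\tilde{\omega}_0^n = (e^{v} - g)\omega^n$, the first piece contributes $\varepsilon^{-N} \int_{X_\varepsilon} g_\varepsilon\,\omega_\varepsilon^n = 0$ by the property of the formal solution, and each term $\int_{X_\varepsilon} \tilde{\omega}_0^{n-j} \wedge (i\partial\bar\partial w)^j$ vanishes by Stokes' theorem on the closed fiber. Since $\tilde{g}_0$ is a polyhomogeneous perturbation of $g$ preserving its $(c,\varepsilon)$-conifold structure, the isomorphism theorem of Lemma \ref{lmunf} applies to $\Delta_{\tilde{\omega}_0}$ for $\varepsilon$ small enough, yielding a uniform estimate $\|K_N(w)\|_{H^M_\nu} \le C\,\|\mathcal{R}(w)\|_{H^{M-2}_{\nu-2}}$.

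Next I would estimate the two parts of $\mathcal{R}(w)$. Since $g \in \dot{C}^\infty(\mathcal{X}_b)$ vanishes to infinite order at $\varepsilon = 0$, the inhomogeneous piece satisfies $\|\varepsilon^{-N} g/(e^{v} - g)\|_{H^{M-2}_{\nu-2}} \le C_N \varepsilon$. For the nonlinearity, one writes $\varepsilon^{N(j-1)} = (\rho_1 \rho_2)^{N(j-1)}$ and applies Lemma \ref{lmmult} to bound $(i\partial\bar\partial w)^j$ in $H^{M-2}_{j(\nu-2)}$. The condition $N > 2 - \nu$ ensures $N(j-1) + j(\nu-2) \ge \nu - 2$ for all $j \ge 2$, giving a continuous inclusion
\[
\varepsilon^{N(j-1)}\, H^{M-2}_{j(\nu-2)} \;\hookrightarrow\; \varepsilon^{(j-1)(N+\nu-2)}\,\rho_2^{N(j-1)}\, H^{M-2}_{\nu-2},
\]
so each summand contributes $O(\varepsilon^{(j-1)(N + \nu - 2)} \|w\|_{H^M_\nu}^j)$. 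Using the Sobolev embedding $H^{M-2}_{b,\varepsilon} \hookrightarrow C^0$, valid because $M-2 > n/2$, to control the elementary symmetric polynomial factors pointwise, one obtains
\[
\|\mathcal{R}(w)\|_{H^{M-2}_{\nu-2}} \;\le\; C_N\, \varepsilon \;+\; C\,\varepsilon^{N + \nu - 2}\, \delta^{2},
\]
for $w \in U_\delta$, which is $\le \delta / C$ once $\varepsilon$ is small. Hence $K_N(U_\delta) \subset U_\delta$.

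For the contraction estimate I would expand
\[
Q_{\varepsilon}(i\partial\bar\partial w_1) - Q_{\varepsilon}(i\partial\bar\partial w_2) = \sum_{j=2}^n c_j\, \varepsilon^{N(j-1)}\, \frac{\tilde{\omega}_0^{n-j} \wedge \sum_{k=0}^{j-1} (i\partial\bar\partial w_1)^k \wedge i\partial\bar\partial(w_1 - w_2) \wedge (i\partial\bar\partial w_2)^{j-1-k}}{\tilde{\omega}_0^n}
\]
and apply the same weighted multiplicative estimates to obtain
\[
\|K_N(w_1) - K_N(w_2)\|_{H^M_\nu} \;\le\; C\,\varepsilon^{N + \nu - 2}\, \delta\, \|w_1 - w_2\|_{H^M_\nu},
\]
which is strictly contracting once $\varepsilon$ is small. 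The Banach fixed-point theorem then produces a unique $\hat\phi \in U_\delta$ with $K_N(\hat\phi) = \hat\phi$, equivalently a solution to equation (\ref{MAdiv}). The main technical obstacle I anticipate is to ensure that Lemma \ref{lmunf}, stated for the model Laplacian $\Delta_\varepsilon$, transfers with a uniform constant to $\Delta_{\tilde{\omega}_0}$ as $\varepsilon \to 0$. This requires showing that $\tilde{g}_0 - g$ is small as a section of $\mathrm{Sym}^2({}^{c,\varepsilon}T^*\mathcal{X}_b)$ uniformly in $\varepsilon$, which follows from the polyhomogeneity of $u_0$ and the positivity of the leading order coefficients, but has to be carefully tracked.
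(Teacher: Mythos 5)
Your proposal is correct and follows essentially the same route as the paper: the same three-step Banach fixed-point scheme (well-definedness via the vanishing of $\int \mathcal{R}(w)\,\tilde\omega_0^n$ using $\tilde\omega_0^n=(e^v-g)\,\omega^n$ and Stokes, then the self-map and contraction estimates via Lemma \ref{lmunf} and the weighted multiplication Lemma \ref{lmmult}, with the gain $\varepsilon^{N+\nu-2}$ coming from $N>2-\nu$). Your closing remark that Lemma \ref{lmunf} must be transferred from $\Delta_\varepsilon$ to $\Delta_{\tilde\omega_0}$ with a uniform constant is a genuine point the paper passes over silently, and your justification (smallness of $\tilde g_0-g$ as a section of $\mathrm{Sym}^2(\hspace{0.01cm}^{c,\varepsilon}T^{*}\mathcal{X}_b)$, from the polyhomogeneity and positivity of $u_0$) is the right one.
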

\begin{proof} We divide the proof to three parts: 
\begin{itemize}
    \item \textbf{Part 1: $K_N$ is well defined:}
    Let $w \in \left(H_{ \nu}^M \right)' $. First, using Lemma \ref{lmmult}, We get that, for all $n \geq j \geq 2$ 
    
    $$ \left\| \frac{\tilde \omega_0^{n-j} \wedge(i \partial \bar{\partial}  w)^j}{\tilde \omega_0^{n}} \right\|_{H_{ j \left(\boldsymbol{\nu} - 2 \right)}^{M -2}} \leq C \left\| 
 w \right\|_{H_{\nu}^{M}}^{j}.$$ 
 In particular, 
 $$
 \frac{\tilde \omega_0^{n-j} \wedge(i \partial \bar{\partial}  w)^j}{\tilde \omega_0^{n}} \in H_{ j \left(\nu - 2 \right)}^{M -2}.
 $$
    
    On the other hand, by supposition that $N > 2 - \nu$ and the fact that $\varepsilon = \rho_1 \rho_2$, we get that
$$
\varepsilon^{N(j-1)} \in {H_{ (j-1) \left( 2 - \nu \right) }^{M -2}}. 
$$
and therefore, using Lemma \ref{lmmult}, we get for $w \in U_{\delta}$
\begin{align*}
\left\| Q_{\varepsilon} (i \partial \bar \partial w) \right\|_{H_{  \nu - 2 }^{M -2}} & \leq \sum_{j=2}^{n} C_{j}
\left\| \varepsilon^{N(j-1)} \left( \frac{\tilde \omega_0^{n-j} \wedge(i \partial \bar{\partial}  w)^j}{\tilde \omega_0^{n}}\right)  \right\|_{H_{  \nu - 2 }^{M -2}} \\ & \leq \sum_{j=2}^{n} \tilde C_{j} \left\| \frac{\tilde \omega_0^{n-j} \wedge(i \partial \bar{\partial}  w)^j}{\tilde \omega_0^{n}}\right\|_{H_{ j \left(  \nu - 2 \right) }^{M -2}} \cdot \left\| \varepsilon^{N(j-1)} \right\|_{H_{ (j-1) \left( 2 - \nu \right) }^{M -2}} \\ & \leq C' \varepsilon^{\mu} \left\| w \right\|_{H_{\nu }^{M}}^2,
\end{align*}
for all $0<\mu < N + \nu -2$. In particular, 
    $$ Q_{\varepsilon} (i \partial \bar \partial w) \in H_{\nu - 2 }^{M -2}$$
    For the remaining term, since $g$ vanishes to infinite order in $\varepsilon$, we get that $\frac{g}{\varepsilon^N (e^{v} - g)} \in H_{  \nu - 2 }^{M -2}$. We also have that the integral of the right hand side paired with the volume form $\tilde \omega_0^n$ vanishes 
    $$ 
    \int (RHS) \hspace{0.1cm} \tilde \omega_0^n =  \int \frac{g}{\varepsilon^N (e^{v} - g)} \tilde \omega_0^n = \frac{1}{\varepsilon^N} \int \frac{g}{e^{v} - g} \tilde \omega_0^n = \int g \omega^n = 0
    $$
    
    The first equality is because all terms involving $i \partial \bar \partial w$ have vanishing integral and the last equality is by construction of $g$ in the previous section. This proves that the right hand side belongs to the image of $\Delta_{\tilde \omega_0}$ acting on $H_{ \nu}^M$ and therefore, by Lemma \ref{lmunf},  $K_{N}$ is well defined.

    \item \textbf{Part 2: $K_N (U_{\delta}) \subset U_{\delta}$:} Let $w \in U_{\delta}$. Using Lemma \ref{lmunf} and the inequalities established in the previous parts, we have 

    \begin{align*}
    \left\| K_{N} (w) \right\|_{H_{ \nu}^M} \leq C \left\| \frac{g}{\varepsilon^N (e^{v} - g)} - Q_{\varepsilon}(i \partial \bar{\partial} w)\right\|_{H_{ \nu - 2 }^{M -2}} & \leq C \left(  \left\| \frac{g}{\varepsilon^N (e^{v} - g)} \right\|_{H_{  \nu - 2 }^{M -2}} +  \left\| Q_{\varepsilon}(i \partial \bar{\partial} w) \right\|_{H_{  \nu - 2 }^{M -2}} \right) \\ & \leq C' \left( \varepsilon + \varepsilon^{\mu} \left\| w \right\|_{H_{\nu }^{M}} \right) \\ & \leq C' (\varepsilon + \varepsilon^{\mu} \delta).
   \end{align*} 
  Therefore, choosing $\varepsilon$ sufficiently small ensures that $\left\| K_{N} (w) \right\|_{H_{ \nu}^M} \leq \delta$. Thus, $K_N$ maps $U_{\delta}$ to itself.

    \item \textbf{Part 3: $K_N$ is a contraction on $U_{\delta}$:} Let $u,v \in U_{\delta}$. 
    \begin{align*}
    \left\| K_{N} (u) - K_{N}(v) \right\|_{H_{ \nu}^M} & \leq C \left\|  Q_{\varepsilon}(i \partial \bar{\partial} u) - Q_{\varepsilon}(i \partial \bar{\partial} v)\right\|_{H_{  \boldsymbol{\nu} - 2 }^{M -2}} \\ & \leq C \sum_{j=2}^{n} \left\| \varepsilon^{N(j-1)} \left( \frac{\tilde \omega_0^{n-j} \wedge\left( (i \partial \bar{\partial}  u)^j - (i \partial \bar{\partial}  v)^j \right)}{\tilde \omega_0^{n}}\right)\right\|_{H_{  \boldsymbol{\nu} - 2 }^{M -2}} \\ & \leq C' \varepsilon^{\mu} \left( \sum_{j=2}^{n} \sum_{k=0}^{j-1} \left\| \frac{\tilde \omega_0^{n-j} \wedge \left( i \partial \bar \partial (u -v) \right) \wedge\left( (i \partial \bar{\partial}  u)^{k} \wedge (i \partial \bar{\partial}  v)^{j - 1-k} \right)}{\tilde \omega_0^{n}} \right\|_{H_{(j-1)(\nu -2)}^{M - 2}} \right) \\ & \leq C'' \varepsilon^{\mu} \left( \sum_{j=2}^{n} \sum_{k=0}^{j-1} \left\| u -v\right\|_{H_{\nu}^{M}} \left\| u\right\|_{H_{\nu}^{M}}^{k} \left\| v\right\|_{H_{\nu}^{M}}^{j-1-k} \right) \\ & \leq C'' \varepsilon^{\mu} \left\| u -v\right\|_{H_{\nu}^{M}} \left( \sum_{j=2}^{n} \delta^{j-1} \right)
    \end{align*}
    Therefore, choosing $\varepsilon$ sufficiently small ensures that $K_{N}$ induces a contraction on $U_{\delta}$. Thus, by the Banach fixed point theorem, the operator $K_{N}$ has a unique fixed point in $U_{\delta}$, i.e, there exists a unique $\hat \phi \in U_{\delta}$ such that 
$$
\Delta_{\tilde \omega_0} \hat \phi =  \frac{g}{\varepsilon^N (e^{v} - g)} - Q_{\varepsilon}(i \partial \bar{\partial} \hat \phi)
$$
    
\end{itemize}
\end{proof}

This proves that, for all $\nu$, $N$ and $M$ satisfying the conditions of the previous theorem, there exists $\phi \in \varepsilon^N \left(H_{\nu}^M \right)' $ such that  
\begin{align*}
\frac{(\tilde \omega_0 + i \partial \bar \partial \phi)^n}{\tilde \omega_0^n} = \frac{e^v}{e^{v} - g} & \implies
\frac{(\omega + i \partial \bar \partial (u_0 + \phi))^n}{\omega^n} = e^v.\end{align*} 
By the regularity of solutions to the complex Monge-Ampère equation, we get that the restriction of $u_0+ \phi$ to each fiber $X_\varepsilon$ is smooth. Hence, by uniqueness of the smooth solution with vanishing integral to the complex Monge-Ampère equation on each fiber (See \cite[Theorem 3.14, Exercise 3.16]{szekelyhidi_extremal_2014} ), if $\phi' \in \varepsilon^{N'} \left(H_{\nu}^{M'} \right)'$ is another solution to equation (\ref{MA8}), with $N' \geq N$ and $M' \geq M$, then $\restr{u_0 + \phi'}{X_\varepsilon} = \restr{u_0 + \phi}{X_\varepsilon}$ and thus $\phi' = \phi$. In other words, we get that $\phi \in \dot{C}^{\infty} (\mathcal{X}_b)$. Therefore, putting $u := u_0 + \phi$, the Kähler form $\tilde{\omega} :=\omega + i \partial \bar \partial u$ is Ricci-flat, finishing the proof of Theorem \ref{main}.

\section*{Declarations}

\subsection*{Funding}
No funding was received for conducting this study.

\subsection*{Competing interests}
The author declares no competing interests.

\subsection*{Data availability}
No datasets were generated or analysed during the current study.

\medskip
\textsc{Département de mathématiques, Université du Québec à Montréal}\\
\textit{Email address :} \texttt{benabida.abdou_oussama@uqam.ca}


\begin{thebibliography}{10}

\bibitem{Albin}
Pierre Albin, Fr{\'e}d{\'e}ric Rochon, and David Sher.
\newblock {A Cheeger–Müller theorem for manifolds with wedge singularities}.
\newblock {\em Analysis \& PDE}, 15(3):567 -- 642, 2022.

\bibitem{arezzo2016csck}
Claudio Arezzo and Cristiano Spotti.
\newblock On csck resolutions of conically singular csck varieties.
\newblock {\em Journal of Functional Analysis}, 271(2):474--494, 2016.

\bibitem{artin1976lectures}
Michael Artin, Conjeeveroun~S Seshadri, and Allen Tannenbaum.
\newblock {\em Lectures on deformations of singularities}, volume~54.
\newblock Tata Institute of Fundamental Research Bombay, 1976.

\bibitem{Biquard}
Olivier Biquard and Yann Rollin.
\newblock Smoothing singular constant scalar curvature {K}\"ahler surfaces and minimal {L}agrangians.
\newblock {\em Adv. Math.}, 285:980--1024, 2015.

\bibitem{boyer2007sasakian}
Charles Boyer and Krzysztof Galicki.
\newblock {\em Sasakian geometry}.
\newblock Oxford university press, 2007.

\bibitem{calabi_metriques_1979}
E.~Calabi.
\newblock Métriques kählériennes et fibrés holomorphes.
\newblock {\em Annales scientifiques de l'École Normale Supérieure}, 12(2):269--294, 1979.
\newblock Publisher: Elsevier.

\bibitem{candelas_comments_1990}
Philip Candelas and Xenia~C. de~la Ossa.
\newblock Comments on conifolds.
\newblock {\em Nuclear Physics B}, 342(1):246--268, September 1990.

\bibitem{chan_desingularizations_2006}
Yat-Ming Chan.
\newblock {DESINGULARIZATIONS} {OF} {CALABI}–{YAU} 3-{FOLDS} {WITH} {A} {CONICAL} {SINGULARITY}.
\newblock {\em Quarterly Journal of Mathematics}, 57(2):151--181, June 2006.
\newblock Conference Name: Quarterly Journal of Mathematics.

\bibitem{chan_desingularizations_2009}
Yat-Ming Chan.
\newblock {DESINGULARIZATIONS} {OF} {CALABI}–{YAU} 3-{FOLDS} {WITH} {CONICAL} {SINGULARITIES}. {II}. {THE} {OBSTRUCTED} {CASE}.
\newblock {\em The Quarterly Journal of Mathematics}, 60(1):1--44, March 2009.

\bibitem{cheeger_spectral_1983}
Jeff Cheeger.
\newblock Spectral geometry of singular {Riemannian} spaces.
\newblock {\em Journal of Differential Geometry}, 18(4):575--657, January 1983.
\newblock Publisher: Lehigh University.

\bibitem{ChiuSzekelyhidi}
Shih-Kai Chiu and G\'abor Sz\'ekelyhidi.
\newblock Higher regularity for singular {K}\"ahler-{E}instein metrics.
\newblock {\em Duke Math. J.}, 172(18):3521--3558, 2023.

\bibitem{collins2024stability}
Tristan Collins, Sebastien Picard, and Shing-Tung Yau.
\newblock Stability of the tangent bundle through conifold transitions.
\newblock {\em Communications on Pure and Applied Mathematics}, 77(1):284--371, 2024.

\bibitem{conlon2013asymptotically}
Ronan~J. Conlon and Hans-Joachim Hein.
\newblock Asymptotically conical {C}alabi-{Y}au manifolds, {I}.
\newblock {\em Duke Math. J.}, 162(15):2855--2902, 2013.

\bibitem{conlon2015asymptotically}
Ronan~J. Conlon and Hans-Joachim Hein.
\newblock Asymptotically conical {C}alabi-{Y}au metrics on quasi-projective varieties.
\newblock {\em Geom. Funct. Anal.}, 25(2):517--552, 2015.

\bibitem{conlon_classification_2024}
Ronan~J. Conlon and Hans-Joachim Hein.
\newblock Classification of asymptotically conical {Calabi}–{Yau} manifolds.
\newblock {\em Duke Mathematical Journal}, 173(5):947--1015, April 2024.
\newblock Publisher: Duke University Press.

\bibitem{conlon_moduli_2015}
Ronan~J. Conlon, Rafe Mazzeo, and Frédéric Rochon.
\newblock The {Moduli} {Space} of {Asymptotically} {Cylindrical} {Calabi}–{Yau} {Manifolds}.
\newblock {\em Communications in Mathematical Physics}, 338(3):953--1009, September 2015.

\bibitem{conlon2023warped}
Ronan~J Conlon and Fr{\'e}d{\'e}ric Rochon.
\newblock Warped quasi-asymptotically conical {C}alabi-yau metrics.
\newblock {\em arXiv preprint arXiv:2308.02155}, 2023.

\bibitem{DonaldsonSun}
Simon Donaldson and Song Sun.
\newblock {Gromov–Hausdorff limits of Kähler manifolds and algebraic geometry, II}.
\newblock {\em Journal of Differential Geometry}, 107(2):327 -- 371, 2017.

\bibitem{Elkik}
Ren\'ee Elkik.
\newblock Solutions d'\'equations \`a{} coefficients dans un anneau hens\'elien.
\newblock {\em Ann. Sci. \'Ecole Norm. Sup. (4)}, 6:553--603, 1973.

\bibitem{eyssidieux_singular_2009}
Philippe Eyssidieux, Vincent Guedj, and Ahmed Zeriahi.
\newblock Singular {Kähler}-{Einstein} {Metrics}.
\newblock {\em Journal of the American Mathematical Society}, 22(3):607--639, 2009.
\newblock Publisher: American Mathematical Society.

\bibitem{friedman_gromov-hausdorff_2024}
Benjamin Friedman, S{\'e}bastien Picard, and Caleb Suan.
\newblock Gromov-{Hausdorff} continuity of non-{K{ä}hler} {Calabi}-{Yau} conifold transitions.
\newblock {\em arXiv preprint arXiv:2404.11840}, 2024.

\bibitem{friedman1991threefolds}
Robert Friedman.
\newblock On threefolds with trivial canonical bundle.
\newblock {\em Complex geometry and Lie theory (Sundance, UT, 1989)}, 53:103--134, 1991.

\bibitem{fu2012balanced}
Jixiang Fu, Jun Li, and Shing-Tung Yau.
\newblock Balanced metrics on non-{K}{\"a}hler {C}alabi-{Y}au threefolds.
\newblock {\em Journal of Differential Geometry}, 90(1):81--129, 2012.

\bibitem{goto_calabi-yau_2012}
Ryushi Goto.
\newblock Calabi-{Yau} structures and {Einstein}-{Sasakian} structures on crepant resolutions of isolated singularities.
\newblock {\em Journal of the Mathematical Society of Japan}, 64(3):1005--1052, July 2012.
\newblock Publisher: Mathematical Society of Japan.

\bibitem{grieser2017scalesblowupquasimodeconstructions}
Daniel Grieser.
\newblock Scales, blow-up and quasimode constructions.
\newblock In {\em Geometric and computational spectral theory}, volume 700 of {\em Contemp. Math.}, pages 207--266. Amer. Math. Soc., Providence, RI, 2017.

\bibitem{grieser_singular_2001}
Daniel Grieser and Michael~J. Gruber.
\newblock Singular {Asymptotics} {Lemma} and {Push}—{Forward} {Theorem}.
\newblock In Juan~B. Gil, Daniel Grieser, and Matthias Lesch, editors, {\em Approaches to {Singular} {Analysis}: {A} {Volume} of {Advances} in {Partial} {Differential} {Equations}}, pages 117--130. Birkhäuser, Basel, 2001.

\bibitem{hein_calabi-yau_2017}
Hans-Joachim Hein and Song Sun.
\newblock Calabi-{Yau} manifolds with isolated conical singularities.
\newblock {\em Publications mathématiques de l'IHÉS}, 126(1):73--130, November 2017.

\bibitem{joyce_special_2004}
Dominic Joyce.
\newblock Special {Lagrangian} {Submanifolds} with {Isolated} {Conical} {Singularities}. {I}. {Regularity}.
\newblock {\em Annals of Global Analysis and Geometry}, 25(3):201--251, May 2004.

\bibitem{joyce2000compact}
Dominic~D Joyce.
\newblock {\em Compact manifolds with special holonomy}.
\newblock Oxford University Press, USA, 2000.

\bibitem{KasSchlessinger}
Arnold Kas and Michael Schlessinger.
\newblock On the versal deformation of a complex space with an isolated singularity.
\newblock {\em Math. Ann.}, 196:23--29, 1972.

\bibitem{lockhart_elliptic_1985}
R.~Lockhart and Robert C.~Mc Owen.
\newblock Elliptic differential operators on noncompact manifolds.
\newblock {\em Annali Della Scuola Normale Superiore Di Pisa-classe Di Scienze}, 1985.

\bibitem{marshall_deformations_nodate}
Stephen~P Marshal.
\newblock {\em Deformations of special Lagrangian submanifolds}.
\newblock PhD thesis, Citeseer, 2002.

\bibitem{melrose_analytic_1995}
R.~B. Melrose and R.~R. Mazzeo.
\newblock Analytic {Surgery} and the {Eta} {Invariant}.
\newblock {\em Geometric and functional analysis}, 5(1):14--75, 1995.

\bibitem{melrose_resolution_2018}
Richard Melrose and Xuwen Zhu.
\newblock Resolution of the canonical fiber metrics for a {Lefschetz} fibration.
\newblock {\em Journal of Differential Geometry}, 108(2):295--317, February 2018.
\newblock Publisher: Lehigh University.

\bibitem{MR4001023}
Richard Melrose and Xuwen Zhu.
\newblock Boundary behaviour of {W}eil-{P}etersson and fibre metrics for
  {R}iemann moduli spaces.
\newblock {\em Int. Math. Res. Not. IMRN}, (16):5012--5065, 2019.

\bibitem{melrose_pseudodifferential_1990}
Richard~B. Melrose.
\newblock {\em Pseudodifferential operators, corners and singular limits}.
\newblock ICM-90. Mathematical Society of Japan, Tokyo; distributed outside Asia by the American Mathematical Society, Providence, RI, 1990.
\newblock A plenary address presented at the International Congress of Mathematicians held in Kyoto, August 1990.

\bibitem{melrose_calculus_1992}
Richard~B. Melrose.
\newblock Calculus of conormal distributions on manifolds with corners.
\newblock {\em International Mathematics Research Notices}, 1992(3):51--61, January 1992.

\bibitem{melrose_atiyah-patodi-singer_1993}
Richard~B. Melrose.
\newblock {\em The {A}tiyah-{P}atodi-{S}inger index theorem}, volume~4 of {\em Research Notes in Mathematics}.
\newblock A K Peters, Ltd., Wellesley, MA, 1993.

\bibitem{melrose1996differential}
Richard~B Melrose.
\newblock Differential analysis on manifolds with corners, 1996.

\bibitem{najafpour2024constant}
Mehrdad Najafpour.
\newblock Constant scalar curvature {K}{\"a}hler metrics on resolutions of an orbifold singularity of depth 1.
\newblock {\em arXiv preprint arXiv:2411.02823}, 2024.

\bibitem{pacini_desingularizing_2013}
Tommaso Pacini.
\newblock Desingularizing isolated conical singularities: {Uniform} estimates via weighted {Sobolev} spaces.
\newblock {\em Communications in Analysis and Geometry}, 21(1):105--170, April 2013.
\newblock Publisher: International Press of Boston.

\bibitem{Rafe_elliptic}
Mazzeo Rafe.
\newblock Elliptic theory of differential edge operators i.
\newblock {\em Communications in Partial Differential Equations}, 16(10):1615--1664, 1991.

\bibitem{reid1987moduli}
Miles Reid.
\newblock The moduli space of 3-folds with k= 0 may nevertheless be irreducible.
\newblock {\em Mathematische Annalen}, 278:329--334, 1987.

\bibitem{Rossi}
Michele Rossi.
\newblock Geometric transitions.
\newblock {\em J. Geom. Phys.}, 56(9):1940--1983, 2006.

\bibitem{MR4271388}
B.~J. Schroers and M.~A. Singer.
\newblock {$D_k$} gravitational instantons as superpositions of
  {A}tiyah-{H}itchin and {T}aub-{NUT} geometries.
\newblock {\em Q. J. Math.}, 72(1-2):277--337, 2021.


\bibitem{MR584445}
Peter Slodowy.
\newblock {\em Simple singularities and simple algebraic groups}, volume 815 of {\em Lecture Notes in Mathematics}.
\newblock Springer, Berlin, 1980.

\bibitem{Spotti}
Cristiano Spotti.
\newblock Deformations of nodal {K}\"ahler-{E}instein del {P}ezzo surfaces with discrete automorphism groups.
\newblock {\em J. Lond. Math. Soc. (2)}, 89(2):539--558, 2014.

\bibitem{stenzel_ricci-flat_1993}
Matthew~B. Stenzel.
\newblock Ricci-flat metrics on the complexification of a compact rank one symmetric space.
\newblock {\em manuscripta mathematica}, 80(1):151--163, December 1993.

\bibitem{szekelyhidi_extremal_2014}
G{\'a}bor Sz{\'e}kelyhidi.
\newblock An introduction to extremal {K\"ahler} metrics.
\newblock {\em Graduate Studies in Mathematics}, vol.~152.
\newblock American Mathematical Society, Providence, RI, 2014.

\bibitem{MR915841}
Gang Tian.
\newblock Smoothness of the universal deformation space of compact
  {C}alabi-{Y}au manifolds and its {P}etersson-{W}eil metric.
\newblock In {\em Mathematical aspects of string theory ({S}an {D}iego,
  {C}alif., 1986)}, volume~1 of {\em Adv. Ser. Math. Phys.}, pages 629--646.
  World Sci. Publishing, Singapore, 1987.

\bibitem{Tian_Yau1991}
G.~Tian and Shing-Tung Yau.
\newblock Complete {K}\"ahler manifolds with zero {R}icci curvature. {I}.
\newblock {\em J. Amer. Math. Soc.}, 3(3):579--609, 1990.

\bibitem{TianYau2}
Shing-Tung Yau and Gang Tian.
\newblock Complete {K\"ahler} manifolds with zero {Ricci} curvature. {II}.
\newblock {\em Inventiones mathematicae}, 106(1):27--60, 1991.

\bibitem{van_coevering_ricci-flat_2010}
Craig van Coevering.
\newblock Ricci-flat {Kähler} metrics on crepant resolutions of {Kähler} cones.
\newblock {\em Mathematische Annalen}, 347(3):581--611, July 2010.

\bibitem{van2011examples}
Craig van Coevering.
\newblock Examples of asymptotically conical ricci-flat k{\"a}hler manifolds.
\newblock {\em Mathematische Zeitschrift}, 267(1):465--496, 2011.

\bibitem{yau_ricci_1978}
Shing~Tung Yau.
\newblock On the {R}icci curvature of a compact {K}\"ahler manifold and the complex {M}onge-{A}mp\`ere equation. {I}.
\newblock {\em Comm. Pure Appl. Math.}, 31(3):339--411, 1978.

\bibitem{zhang2024polynomial}
Junsheng Zhang.
\newblock On polynomial convergence to tangent cones for singular {K}{ä}hler-{E}instein metrics.
\newblock {\em arXiv preprint arXiv:2407.07382}, 2024.

\bibitem{zhu2024gluing}
Xuwen Zhu.
\newblock A gluing construction of {$D_k$} {ALF} gravitational
  instantons and existence of non-holomorphic minimal spheres.
\newblock {\em arXiv preprint arXiv:2407.20149}, 2024.


\end{thebibliography}
\end{document}